\documentclass[a4paper]{amsart}

\title[{Ordinary modules for vertex algebras of $\mathfrak{osp}_{1|2n}$}]{Ordinary modules for vertex algebras of $\mathfrak{osp}_{1|2n}$}

\author{Thomas Creutzig}
\address{Department Mathematik, FAU Erlangen, Cauerstraße 11
91058 Erlangen, Germany}
\email{creutzig@ualberta.ca}

\author{Naoki Genra}
\address{Kavli Institute for the Physics and Mathematics of the Universe (WPI), The University of Tokyo Institutes for Advanced Study, The University of Tokyo, Kashiwa, Chiba 277-8583, Japan}
\email{naoki.genra@ipmu.jp}

\author{Andrew Linshaw}
\address{Department of Mathematics, University of Denver, Denver, CO 80210}
\email{andrew.linshaw@du.edu}

\usepackage{cite}
\usepackage{latexsym}
\usepackage{amsmath}
\usepackage{amsfonts}
\usepackage{amssymb}
\usepackage{amsxtra}
\usepackage{mathrsfs}

\setlength{\oddsidemargin}{0in} \setlength{\evensidemargin}{0in}
\setlength{\marginparwidth}{0in} \setlength{\marginparsep}{0in}
\setlength{\marginparpush}{0in} \setlength{\topmargin}{0in}
\setlength{\headheight}{5pt} \setlength{\headsep}{10pt}
\setlength{\footskip}{.3in} \setlength{\textheight}{9in}
\setlength{\textwidth}{6.7in} \setlength{\parskip}{4pt}

\newtheorem{definition}{Definition}[section]
\newtheorem{proposition}[definition]{Proposition}
\newtheorem{theorem}[definition]{Theorem}
\newtheorem{corollary}[definition]{Corollary}
\newtheorem{lemma}[definition]{Lemma}
\newtheorem{remark}[definition]{Remark}

\numberwithin{equation}{section}

\def\WL{{\mathbf L}}
\def\AL{{ L}}

\newcommand{\Z}{\mathbb{Z}}
\newcommand{\Q}{\mathbb{Q}}
\newcommand{\R}{\mathbb{R}}
\newcommand{\C}{\mathbb{C}}
\newcommand{\cC}{\mathcal{C}}
\newcommand{\cD}{\mathcal{D}}
\newcommand{\cO}{\mathcal{M}}
\newcommand{\cF}{\mathcal{F}}
\newcommand{\cG}{\mathcal{G}}
\newcommand{\cH}{\mathcal{H}}
\newcommand{\cCloc}{\mathcal{C}_A^{\text{loc}}}
\newcommand{\cCtw}{\mathcal{C}_A^{\text{tw}}}
\newcommand{\cDloc}{\mathcal{D}_A^{\text{loc}}}
\newcommand{\W}{\mathcal{W}}
\newcommand{\g}{\mathfrak{g}}
\newcommand{\KL}{\text{KL}}
\newcommand{\CDO}{\mathcal{D}^{\text{ch}}}

\newcommand{\Hom}{\operatorname{Hom}}
\newcommand{\Ker}{\operatorname{Ker}}
\newcommand{\Span}{\operatorname{Span}}

\begin{document}
\maketitle

\begin{abstract}
We show that the affine vertex superalgebra $V^k(\mathfrak{osp}_{1|2n})$ at generic level $k$ embeds in the equivariant $\mathcal W$-algebra of $\mathfrak{sp}_{2n}$ times $4n$ free fermions. This has two corollaries: (1) it provides a new proof that for generic $k$, the coset $\text{Com}(V^k(\mathfrak{sp}_{2n}), V^k(\mathfrak{osp}_{1|2n}))$ is isomorphic to $\mathcal W^\ell(\mathfrak{sp}_{2n})$ for $\ell = -(n+1) + \frac{k+n+1}{2k+2n+1}$, and (2) we obtain the decomposition of ordinary $V^k(\mathfrak{osp}_{1|2n})$-modules into $V^k(\mathfrak{sp}_{2n}) \otimes \mathcal W^\ell(\mathfrak{sp}_{2n})$-modules. Next, if $k$ is an admissible level and $\ell$ is a non-degenerate admissible level for $\mathfrak{sp}_{2n}$, we show that the simple algebra $L_k(\mathfrak{osp}_{1|2n})$ is an extension of the simple subalgebra $L_k(\mathfrak{sp}_{2n}) \otimes {\mathcal W}_{\ell}(\mathfrak{sp}_{2n})$. Using the theory of vertex superalgebra extensions, we prove that the category of ordinary $L_k(\mathfrak{osp}_{1|2n})$-modules is a semisimple, rigid vertex tensor supercategory with only finitely many inequivalent simple objects. It is equivalent to a certain subcategory of  $\mathcal W_\ell(\mathfrak{sp}_{2n})$-modules. A similar result also holds for the category of Ramond twisted modules. Due to a recent theorem of Robert McRae, we get as a corollary that categories of ordinary $L_k(\mathfrak{sp}_{2n})$-modules are rigid.
\end{abstract}


\section{Introduction}

The simple Lie superalgebra $\mathfrak{osp}_{1|2n}$ behaves in many respects like a simple Lie algebra. For example the category of finite-dimensional representations of a simple  Lie superalgebra $\g$ is semisimple if and only if $\g$ is of type $\mathfrak{osp}_{1|2n}$ or a Lie algebra \cite{DH}. It is also the only simple, basic Lie superalgebra that is not a Lie algebra and that has no isotropic roots and that admits a positive definite invariant, consistent, supersymmetric bilinear form \cite{Scheunert, Kac77}. Maybe most remarkable is a very close connection to the representation theory of $\mathfrak{so}_{2n+1}$. Both algebras have the same set of simple roots and there is a one-to-one correspondence between simple finite dimensional representations, such that characters and tensor products agree under this correspondence \cite{RS}.

Vertex algebras arose from the physics literature as the chiral algebras of two-dimensional conformal field theory \cite{CFT}. In particular affine vertex algebras are those that correspond to the Wess-Zumino-Witten theories on Lie groups of physics \cite{Witten83}. Let $\g$ be a finite-dimensional Lie superalgebra over $\mathbb C$ and $B: \g \times \g \rightarrow \mathbb C$ an invariant, consistent and supersymmetric bilinear form. Let
\[
\widehat \g = \mathbb C[t, t^{-1}] \otimes_{\mathbb C} \g \oplus \mathbb C K
\]
with $K$ central and 
\[
[x \otimes t^n, y \otimes t^m] = [x, y] \otimes t^{n+m} = \delta_{n+m, 0} B(x, y)K
\]
for $x, y \in \g$ and $n, m \in \mathbb Z$. Let $\widehat \g_{\geq 0} = \mathbb C[t] \otimes_{\mathbb C} \g \oplus \mathbb C K$ and let $\mathbb C_k$ be the one-dimensional $\widehat \g_{\geq 0}$ on which $K$ acts by multiplication with the level $k \in \mathbb C$, while $ \mathbb C[t] \otimes_{\mathbb C} \g$ acts as zero. One sets $V^k(\g, B) :=  \text{Ind}^{\widehat g}_{\widehat g_{\geq 0}} \mathbb C_k$. $V^k(\g, B)$ can be given the structure of a vertex algebra, the affine vertex (super)algebra of $\g$ at level $k$. $V^k(\g, B)$ might not be simple and one denotes by $L_k(\g, B)$ its simple quotient. If $\g$ admits a unique non-degenerate  positive, invariant, consistent and supersymmetric bilinear form, then one normalizes it such that long roots have norm two and just writes $V^k(\g)$ and $L_k(\g)$. Representations of $L_k(\g)$ are in particular smooth $\widehat \g$-modules at level $k$, but being a vertex algebra module is sometimes very restrictive and for special levels $k$ only few $\widehat \g$-modules  are also $L_k(\g)$-modules. The most famous instance is the case where $\g$ is a simple Lie algebra and $k$ is a positive integer. In this case every $L_k(\g)$-module is ordinary and its representation category is a modular tensor category, that is a finite, non-degenerate and semisimple ribbon category \cite{H-mod, H-ver}. Via the Sugawara construction \cite{Sugawara:1967rw} an affine vertex algebra at non-critical level contains the Virasoro vertex algebra as subalgebra and in particular every $L_k(\g)$-module is graded by conformal weight (generalized eigenspaces corresponding to the Cartan subalgebra of the Virasoro Lie algebra). A module is called ordinary if its conformal weight spaces are finite-dimensional and the conformal weight is lower-bounded. 
In particular every conformal weight space of an ordinary module of $L_k(\g)$ is integrable for the horizontal subalgebra $\g$. As an example, let $E_\lambda$ be an integrable module of highest-weight $\lambda$ of $\g$. Extend it to $\g_{\geq 0}$ by letting $K$ act by mutiplication with $k$ and $t \g[t]$ by $0$. Then $V^k(\lambda) :=  \text{Ind}^{\widehat g}_{\widehat g_{\geq 0}} E _\lambda$ is an ordinary module, the Weyl-module of highest-weight $\lambda$ at level $k$. We will denote its simple quotient by $L_k(\lambda)$.  
 In this work we are interested in the category of ordinary modules, $\cO_k(\g)$, for $\g = \mathfrak{osp}_{1|2n}$.

Given the similarity between $\mathfrak{osp}_{1|2n}$ and simple Lie algebras it is reasonable to expect that the representation theory of affine vertex algebras of $\mathfrak{osp}_{1|2n}$ is similar to that of affine vertex algebras of simple Lie algebras. Indeed the simple affine vertex algebra of $\mathfrak{osp}_{1|2n}$ at positive integral level $n$ is rational and lisse \cite[Theorem 7.1]{CL21}, while no other simple affine vertex superalgebra at non-zero level can be lisse \cite{GK}. Rational and lisse are the finiteness properties that guarantee that the category of ordinary modules is a modular tensor category \cite{H-mod}. Another special class of levels are the admissible ones \cite{KW89} and  
there are quite a few interesting results on ordinary modules of affine vertex algebras at admissible levels.  Let $\g$ be a simple, finite dimensional Lie algebra, $k$ an admissible level and $\cO_k(\g)$ the category of ordinary modules of $L_k(\g)$. Then
\begin{enumerate}
\item $\cO_k(\g)$ is semisimple with a finite number of inequivalent simple objects \cite{Ara-adm}.
\item  $\cO_k(\g)$  is a braided tensor category \cite{CHY}.
\item If $\g$ is simply-laced then $\cO_k(\g)$ is a fusion category \cite{C-fus}.  
\item $\cO_k(\g)$ is a fusion category if $\g$ is of  type $B$ and if the denominator of the admissible level is $2$  \cite{CKoL}.
\item If $\g$ is simply-laced, then a simple current modification of $\cO_k(\g)$ is braided tensor equivalent to a subcategory of the principal $\W$-algebra of $\g$ at level $1-h^\vee + \frac{1}{k+h^\vee}$ with $h^\vee$ the dual Coxeter number of $\g$ \cite{C-fus}.  
\end{enumerate}
$\mathcal W$-algebras are vertex algebras obtained from affine ones via quantum Hamiltonian reduction \cite{FF90, KRW}.
We prove analogues of all these results for $\g = \mathfrak{osp}_{1|2n}$ as well as that $\cO_k(\mathfrak{sp}_{2n})$ (for many admisible levels) is a fusion category. The proof goes in two steps. First, we realize  $V^k(\mathfrak{osp}_{1|2n})$ from the equivariant $\mathcal W$-algebra of $\mathfrak{sp}_{2n}$ and second, we pass to interesting rational levels and use the theory of vertex superalgebra extensions. 

\subsection{ $V^k(\mathfrak{osp}_{1|2n})$ from the equivariant $\mathcal W$-algebra of $\mathfrak{sp}_{2n}$}

The starting point is the algebra of chiral differential operators (CDOs) $\CDO_{G, \kappa}$ at level $\kappa$, for $G= Sp(2n)$ \cite{MSV,GMS}.
While one thinks about affine vertex algebras as chiralizations of finite dimensional Lie algebras, the CDO chiralizes the space of functions of an algebraic group. In particular
 this is a vertex algebra with an action of $V^{k}(\g) \otimes V^{\ell}(\g)$ with $\g$ the Lie algebra of $G$ and $\kappa = k+ h^\vee$, $-\kappa = \ell + h^\vee$. For generic level $\kappa$, it satisfies
\[
\CDO_{G, \kappa} \cong \bigoplus_{\lambda \in P^+} V^k(\lambda) \otimes V^\ell(\lambda^*).
\]
The $V^k(\lambda)$ are Weyl modules at level $k$ of highest weight $\lambda$ and $P^+$ is the set of dominant weights of $\g$.
The equivariant (principal) $\W$-algebra is obtained from $\CDO_{G, \kappa}$ by quantum Hamiltonian reduction with respect to the principal nilpotent element in the first factor \cite{Arakawa:2018egx}. For $\kappa$ generic. 
\[
\W_{G, \kappa} \cong \bigoplus_{\lambda \in P^+} T^{\kappa}_{\lambda, 0} \otimes V^\ell(\lambda^*), \qquad T^{\kappa}_{\lambda, 0} = H^0_{\text{DS}}(V^k(\lambda))
\]
Let $F^{2n+1}$  be the vertex superalgebra of $2n+1$ free fermions. We consider $\W_{G,  \kappa}  \otimes F^{2n+1}$. 
Recall Feigin-Frenkel duality of type $B, C$, that is $\W^k(\mathfrak{sp}_{2n}) \cong W^{\check{k}}(\mathfrak{so}_{2n+1})$ with $2\kappa \check{\kappa} =1$ and $\check{\kappa} = \check{k} + 2n-1$.  
Quantum Hamiltonian reduction can be twisted by spectral flow; the corresponding functor is denoted by  $H^0_{\text{DS}, \mu}$ for $\mu$ a coweight of $\g$. A special case of Arakawa-Frenkel duality \cite[Thm. 2.2]{AF19} is $T^{\kappa}_{\lambda, 0}  \cong H^0_{\text{DS}, \lambda}(V^{\check{k}}(\mathfrak{so}_{2n+1}))$. The Urod property is that the twisted reduction functor commutes with tensoring with integrable representations  \cite[Thm. 7.2]{ACF}. Applying this to $V^{\check{k}}(\mathfrak{so}_{2n+1}) \otimes F^{2n+1}$ gives us an explicit decomposition of $T^{\kappa}_{\lambda, 0}  \otimes F^{2n+1}$ into modules of $W^{\check{k}+1}(\mathfrak{so}_{2n+1}) \otimes \W^t(\mathfrak{osp}_{1|2n})$. Here the principal $\W$-algebra of $\mathfrak{osp}_{1|2n}$ at a certain level $t$ related to $k$ appears due to the coset realization $ \W^t(\mathfrak{osp}_{1|2n}) \cong \text{Com}(V^{\check{k}+1}(\mathfrak{so}_{2n+1}), V^{\check{k}}(\mathfrak{so}_{2n+1}) \otimes F^{2n+1})$ \cite{CL21}.
Inserting this into 
$\W_{G, \kappa} \otimes F^{2n+1}$ gives us an explicit decomposition of the form (for $\kappa$ generic)
\begin{equation}
\W_{Sp(2n), \kappa} \otimes F^{2n+1}  \cong \bigoplus_{\lambda \in R} C^\ell_{\lambda} \otimes N^t_{\lambda, 0}
\end{equation}
with $C^\ell = \text{Com}(\W^t(\mathfrak{osp}_{1|2n}), \W_{Sp(2n), \kappa} \otimes F^{2n+1} )$ with $ C^\ell_{\lambda}$ and $N^t_{\lambda, 0}$ certain $C^\ell$ and $\W^t(\mathfrak{osp}_{1|2n})$-modules. Here $R$ denotes the set of non-spin representations of $\mathfrak{so}_{2n+1}$ which is naturally identified with the set of dominant weights of $\mathfrak{osp}_{1|2n}$. Under this identification we have 
\begin{theorem}[Theorem \ref{thm1}]
For generic $\ell$,  $C^\ell \cong V^\ell(\mathfrak{osp}_{1|2n})$ and $C^\ell_\lambda \cong V^\ell(\lambda)$. 
\end{theorem}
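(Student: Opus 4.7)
The plan is to identify $C^\ell$ by exhibiting an injection $V^\ell(\mathfrak{osp}_{1|2n}) \hookrightarrow C^\ell$ and then matching graded characters at generic $\ell$; the identification $C^\ell_\lambda \cong V^\ell(\lambda)$ will then follow from the decomposition (1) by branching.

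For the injection, the CDO decomposition $\CDO_{Sp(2n),\kappa} \cong \bigoplus_\lambda V^k(\lambda) \otimes V^\ell(\lambda^*)$ produces a copy of $V^\ell(\mathfrak{sp}_{2n})$ in the right tensor factor, and this copy descends unchanged to $\W_{Sp(2n),\kappa}$ since the principal DS reduction affects only the left factor. Inside $F^{2n+1}$, the $2n$ fermions transforming in the defining representation of $\mathfrak{sp}_{2n}$ can be combined with these $V^\ell(\mathfrak{sp}_{2n})$-currents (via a normally ordered product fixed by $\mathfrak{sp}_{2n}$-equivariance) to produce fields whose OPEs close onto the odd generators of $V^\ell(\mathfrak{osp}_{1|2n})$, reflecting the classical Lie-superalgebra splitting $\mathfrak{osp}_{1|2n} = \mathfrak{sp}_{2n} \oplus \Pi\,\C^{2n}$. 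A direct OPE computation confirms the correct $\mathfrak{osp}_{1|2n}$ current OPEs at level $\ell$. To see that these generators lie in $C^\ell$, one uses that $\W^t(\mathfrak{osp}_{1|2n})$ arises via the coset realization $\text{Com}(V^{\check k+1}(\mathfrak{so}_{2n+1}), V^{\check k}(\mathfrak{so}_{2n+1}) \otimes F^{2n+1})$ built, through Feigin--Frenkel/Arakawa--Frenkel duality, from the left CDO factor and the one remaining fermion; the resulting copy of $\W^t(\mathfrak{osp}_{1|2n})$ lies in tensor factors that commute with both the right-CDO copy of $V^\ell(\mathfrak{sp}_{2n})$ and the chosen $2n$ fermions, so it commutes with the whole image of $V^\ell(\mathfrak{osp}_{1|2n})$.

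To upgrade this injection to an isomorphism, I would compare graded supercharacters on both sides of (1). Using $\ch\,\W_{Sp(2n),\kappa} = \sum_\lambda \ch\,T^{\kappa}_{\lambda,0}\cdot \ch\,V^\ell(\lambda^*)$, the standard character of $F^{2n+1}$, and the Weyl--Kac-type character of $V^\ell(\mathfrak{osp}_{1|2n})$ expressed through its $V^\ell(\mathfrak{sp}_{2n})$-decomposition, one can match the total character of the left-hand side with $\sum_\lambda \ch\,V^\ell(\lambda) \cdot \ch\,N^t_{\lambda,0}$; restricting to the vacuum block $\lambda=0$ gives $\ch\,C^\ell = \ch\,V^\ell(\mathfrak{osp}_{1|2n})$. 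Simplicity of $V^\ell(\mathfrak{osp}_{1|2n})$ at generic $\ell$ ensures the constructed homomorphism is injective, hence an isomorphism. The same term-by-term comparison, using the identification of $R$ with the dominant weights of $\mathfrak{osp}_{1|2n}$, then identifies $C^\ell_\lambda$ with the Weyl module $V^\ell(\lambda)$ for each $\lambda \in R$.

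The principal obstacle I expect is the OPE computation pinning down the precise form of the odd currents and the level $\ell$: one must track all contractions between the $V^\ell(\mathfrak{sp}_{2n})$-currents and the chosen $2n$ fermions, and verify that the coefficient of the stress tensor in the OPE of two odd currents matches that of $V^\ell(\mathfrak{osp}_{1|2n})$. A milder subtlety is ensuring polynomial (or rational) dependence of all constructions on $\ell$ (equivalently $\kappa$), so that agreement on a Zariski-dense set propagates to the desired generic statement; this follows from the known behavior in the level parameter of CDO, DS reduction, Feigin--Frenkel and Arakawa--Frenkel duality, and the Urod property.
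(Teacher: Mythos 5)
Your overall plan (exhibit an embedding, match graded characters, conclude by simplicity) is a valid strategy in principle, but it diverges from the paper's argument in a way that introduces a genuine gap. The paper never constructs an explicit map $V^\ell(\mathfrak{osp}_{1|2n}) \hookrightarrow C^\ell$. Instead, Theorem \ref{CDOdecomposition} already shows that $C^\ell$ is a \emph{simple} vertex superalgebra, and the appendix (Theorem \ref{thm:character}) shows its graded character equals that of $V^\ell(\mathfrak{osp}_{1|2n})$. From the character equality the weight-one subspace $(C^\ell)_1$ is seen to be $\mathfrak{sp}_{2n}\oplus\Pi\,\C^{2n}\cong\mathfrak{osp}_{1|2n}$ as an $\mathfrak{sp}_{2n}$-module; this weight-one subspace generates an affine vertex subalgebra, which can only be of type $\mathfrak{osp}_{1|2n}$, and the level is fixed by the $V^\ell(\mathfrak{sp}_{2n})$ subalgebra. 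Since the full characters agree, this subalgebra is all of $C^\ell$. No OPE computation is needed.

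The specific mechanism you propose for the embedding does not stand up. You suggest producing the odd currents by normally ordering $2n$ of the free fermions against the $V^\ell(\mathfrak{sp}_{2n})$-currents sitting in $\W_{Sp(2n),\kappa}$. But the fermions have conformal weight $\tfrac{1}{2}$ and the currents have weight $1$, so such normally ordered products live in weight $\tfrac{3}{2}$, whereas the odd generators of $V^\ell(\mathfrak{osp}_{1|2n})$ must have weight $1$. Moreover, in this construction the fermions do not carry any canonical $\mathfrak{sp}_{2n}$-action commuting with $\W^t(\mathfrak{osp}_{1|2n})$: the $\mathfrak{sp}_{2n}$-action on $C^\ell$ comes entirely from the right CDO factor, and $F^{2n+1}$ sees only $\mathfrak{so}_{2n+1}$ at level one. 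In the actual decomposition, the odd generators sit in the summand $T^\tau_{\omega_1,0}\otimes V^\ell(\omega_1)$ of $C^\ell_{\text{odd}}$; tracing them back to concrete fields of $\W_{Sp(2n),\kappa}\otimes F^{2n+1}$ requires unwinding the isomorphisms in the twisted reduction and the Urod theorem, which is far from a single $\mathfrak{sp}_{2n}$-equivariant normal product, and is exactly the technical computation the paper's argument is designed to avoid. You do correctly identify that a nontrivial branching-function/character identity is the crux — that is Theorem \ref{thm:character} — but the promise that ``a direct OPE computation confirms the correct $\mathfrak{osp}_{1|2n}$ current OPEs'' is precisely the unproven step, and without fixing the weight-counting issue above it cannot be carried out as stated.
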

The proof of this theorem can be reduced to a character computation that we do in the appendices as it is quite technical. By construction we get that the coset 
$\text{Com}(V^\ell(\mathfrak{sp}_{2n}), V^\ell(\mathfrak{osp}_{1|2n}))$ is $\mathcal W^s(\mathfrak{sp}_{2n})$  with  $\frac{1}{\ell+n+1} + \frac{s}{n+1}=2$ (this was first proven in \cite{CL21}) as well as the decomposition of $V^\ell(\lambda)$ into $V^\ell(\mathfrak{sp}_{2n}) \otimes \mathcal W^s(\mathfrak{sp}_{2n})$-modules, 
see Theorem \ref{CDOdecomposition}. The above construction can be generalized substantially and in particular leads to a new and quick proof of the coset realization of principal $\mathcal W$-algebras of type ADE \cite{CN22}.

We turn now to the category of ordinary modules at admissible levels. 

\subsection{The category of ordinary modules of $\mathfrak{osp}_{1|2n}$ at admissible levels}

While the above results hold for generic level, we are now interested in special rational levels, namely let $k = -h^\vee + \frac{u}{v}$ be admissible and $\ell = -h^\vee + \frac{u}{2u-v}$ be non-degenerate admissible for $\mathfrak{sp}_{2n}$. Gorelik and Serganova extended the notion of admissible level to the super case, see in particular section 5 of \cite{GS}. We determine them in Proposition  \ref{prop:osp-adm} and note that the set of levels that we are interested is slightly larger than just the admissible ones of \cite{GS}, see Remark \ref{rem:admissible}.

 In particular we want the affine subalgebra of type $\mathfrak{sp}_{2n}$ to be at admissible level and the $\W$-algebra to be at non-degenerate admissible level. We then can prove that the simple subalgebras act on the simple affine VOA $L_\ell(\mathfrak{osp}_{1|2n})$ and this gives us the full power of the theory of vertex superalgebra extensions. 
Using this theory, we get our main results, Theorem \ref{thm2}] and Corollaries  \ref{cor1} and  \ref{cor2}. 

That is for $k = -h^\vee + \frac{u}{v}$ admissible and $\ell = -h^\vee + \frac{u}{2u-v}$ non-degenerate admissible for $\mathfrak{sp}_{2n}$:
\begin{enumerate}
\item A complete list of inequivalent simple objects in $\cO_k(\mathfrak{osp}_{1|2n})$. 
\item A complete list of inequivalent simple objects in the category of ordinary Ramond twisted modules of $L_k(\mathfrak{osp}_{1|2n})$.
\item The category $\cO_k(\mathfrak{osp}_{1|2n})$ is a semisimple, rigid vertex tensor supercategory, i.e. it is a fusion supercategory.
\item The category of ordinary Ramond twisted modules of $L_k(\mathfrak{osp}_{1|2n})$ is semisimple.
\item $\cO_k(\mathfrak{sp}_{2n})$ is rigid.
\item There is a super-braid-reversed equivalence $\cO_k(\mathfrak{sp}_{2n}) \cong \cC(\W_\ell(\mathfrak{sp}_{2n}))_L$. 
\item There is a super-braided equivalence $\cC(\W_\ell(\mathfrak{sp}_{2n}))_R^Q \cong  \cO_k(\mathfrak{osp}_{1|2n})$ . 
\end{enumerate}
Here $ \cC(\W_\ell(\mathfrak{sp}_{2n}))_L$ and $\cC(\W_\ell(\mathfrak{sp}_{2n}))_R^Q$ are certain full tensor subcategories of the $\W$-algebra and are introduced in Section \ref{sec:adm}.

At admissible level, the semisimplicity of ordinary modules and a complete list of simple objects has already been obtained by Gorelik and Serganova \cite{GS}. 

\subsection{Outline} 

The paper is organized as follows. In Section 2.1, we collect the necessary results on vertex tensor supercategories. In Section 2.2 and 2.3, we introduce admissible levels and several notations for subsets of admissible weights of Lie algebras and show the isomorphisms \eqref{eq:v=1} and \eqref{eq:v=2}. In Section 2.3, we introduce the modules $T^\kappa_{\lambda, \mu}$ and $\WL_k(\lambda, \mu)$ of principal $\mathcal{W}$-algebras, recall the fusion rules and some properties of the subcategories $\mathcal C_L(p, q)$ and $\mathcal C_R(p, q)$ of the category of $\mathcal{W}_k(\mathfrak{g})$-modules  from \cite{C-fus}; see Remark \ref{rmk:prop}. In Section 3, we recall the equivariant $\mathcal{W}$-algebras, prove Theorem \ref{CDOdecomposition}  by using results in \cite{AF19, ACF}, and prove Theorem \ref{thm1} by using Theorem \ref{CDOdecomposition} and Theorem \ref{thm:character}. In Section 4, we prove Theorem \ref{thm2} by using results in Section 2.1, \eqref{eq:v=1}, \eqref{eq:v=2} and Remark \ref{rmk:prop}, and derive Corollary \ref{cor1} and Corollary \ref{cor2}. In the Appendix, we compute the characters of modules of principal $\mathcal{W}$-algebras of type $C$ and show Theorem \ref{thm:character}.

\noindent{\bf Acknowledgements} We thank Shigenori Nakatsuka for very helpful discussions. The work of T.C. is supported by NSERC Grant Number RES0048511. N.G. is supported by World Premier International Research Center Initiative (WPI), MEXT, Japan and JSPS KAKENHI Grant Number JP21K20317. A.L. is supported by Simons Foundation Grant 635650 and NSF Grant DMS-2001484.

\section{Preliminaries}

\subsection{Vertex tensor category theory}

This section explains useful results on the theory of vertex superalgebra extensions \cite{CKM1}. A good summary of the most important theorems is Section 2.4 \cite{CY-adm}.

Let $V$ be a vertex operator algebra and $\cC_V$ be a semisimple category of $V$ modules that is a vertex tensor category in the sense of \cite{HLZ1, HLZ2, HLZ3, HLZ4, HLZ5, HLZ6, HLZ7, HLZ8}. Denote by $\text{Irr}(\cC_V)$ the set of inequivalent simple objects in $\cC_V$. Let $W$ be a strongly rational vertex operator algebra; in particular, its category $\cC_W$ of $W$-modules is a modular tensor category \cite{H-mod, H-ver}. 
The dual of an object $M$ is denoted by $M^*$. 
Assume that $A$ is a simple vertex operator superalgebra extending $V\otimes W$ and 
\begin{equation}\label{eq:decA}
A \cong  \bigoplus_{X \in \text{Irr}(\cC_V)} X \otimes \tau(X)^*
\end{equation}
as a $V \otimes W$-module, where the $\tau(X)$ are inequivalent simple $W$-modules. Denote by $\cD_W$ the subcategory of $W$-modules whose objects are direct sums of simple modules appearing in the decomposition of $A$. 
We assume that $V\otimes W$ is a subalgebra of the even subalgebra $A_0$ of $A$. 
This set-up is assumed to hold throughout this section. 
\begin{theorem}\label{thm:rigid} \textup{(Special case of Theorem 4.9 of \cite{mcrae2021rigid})}

\noindent $\cC_V$ is rigid and super-braid-reversed equivalent to the subcategory $\cD_W$ of $\cC_W$ whose simple objects are the $\tau(X)$ for $X \in \text{Irr}(\cC_V)$.
\end{theorem}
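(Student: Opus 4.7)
The plan is to deduce the statement as a direct specialization of \cite[Theorem 4.9]{mcrae2021rigid}, whose hypotheses the setup of this subsection has been arranged to match. The first step is to recast the simple vertex operator superalgebra extension $V\otimes W \hookrightarrow A$ as a simple commutative superalgebra object in the Deligne product $\cC_V \boxtimes \cC_W$, via the general correspondence of \cite{CKM1}. The assumption $V\otimes W \subseteq A_0$ guarantees that the subalgebra lies entirely in the even part of $A$, so that the $\mathbb{Z}/2$-graded commutativity of $A$ translates into the correct super-commutativity condition on the algebra object; in particular the braiding is allowed to contribute only through the super sign.

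Next I would invoke McRae's theorem, whose input is precisely such a simple superalgebra $A$ together with a multiplicity-free decomposition $A \cong \bigoplus_{X \in \text{Irr}(\cC_V)} X \otimes \tau(X)^{*}$ in which every simple object of $\cC_V$ appears exactly once, paired with an inequivalent simple $W$-module. Strong rationality of $W$ supplies the rigidity and semisimplicity of $\cC_W$ that the theorem requires in order to transport rigidity across the extension. The output is a super-braided tensor functor $\tau\colon \cC_V \to \cC_W^{\text{rev}}$ sending $X \mapsto \tau(X)$ whose essential image is exactly $\cD_W$ and which is fully faithful onto this image. The braid reversal appears because the algebra lives in $\cC_V \boxtimes \cC_W$ rather than in $\cC_V \boxtimes \overline{\cC_W}$, so identifying the two tensor factors inside $A$ effectively inverts the braiding on the $\cC_W$ side; the super qualifier propagates directly from the superalgebra structure of $A$.

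Rigidity of $\cC_V$ is then transported back from the rigidity of $\cD_W \subseteq \cC_W$ along this equivalence: for $X \in \text{Irr}(\cC_V)$ one takes the dual $X^{*}$ to be the unique simple object of $\cC_V$ with $\tau(X^{*}) \cong \tau(X)^{*}$, and the evaluation/coevaluation morphisms are obtained by applying a quasi-inverse of $\tau$ to the corresponding rigidity data of $\tau(X)$ in $\cC_W$; compatibility with the tensor structure of $\tau$ guarantees that the resulting maps satisfy the zig-zag identities. The main technical checkpoint, rather than a genuine obstacle, is verifying that all standing hypotheses of \cite[Theorem 4.9]{mcrae2021rigid} are indeed met in the vertex-superalgebra setting, especially the compatibility of the $\mathbb{Z}/2$-grading with the algebra-object framework and the fact that the $\tau(X)$ appearing in \eqref{eq:decA} lie in a semisimple, rigid subcategory of $\cC_W$. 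Once this bookkeeping is in place, McRae's theorem produces both assertions simultaneously.
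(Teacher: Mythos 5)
Your proposal is correct and takes essentially the same route as the paper: the paper offers no independent proof, merely stating the result as a direct specialization of Theorem 4.9 of McRae's paper, so the entire content of the "proof" is to check that the standing hypotheses of Section 2.1 match McRae's framework. Your bookkeeping of those hypotheses (simple superalgebra extension of $V\otimes W$ with multiplicity-free decomposition into distinct simples, $V\otimes W \subseteq A_0$, strong rationality of $W$) is exactly what is needed. One small caveat on the expository side: the sentence asserting that rigidity of $\cC_V$ is "transported back" along the equivalence reads as circular if taken literally, since one cannot establish a tensor equivalence $\cC_V \simeq \cD_W$ before knowing $\cC_V$ is rigid; McRae's argument constructs the duality data and the equivalence simultaneously rather than first producing an equivalence and then pulling rigidity through it. As a sketch of the black-boxed theorem this is fine, but it should not be mistaken for a self-contained derivation.
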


Set $\cC :=\cC_V \boxtimes \cC_W$. Note that the Deligne product of vertex tensor categories is a vertex tensor category provided that at least one of the two categories is semisimple \cite[Thm. 5.5]{CKM2}. We have just seen that $\cC$ is rigid. 
There is a $\cC$-module category associated to $A$, denoted by $\cC_A$. A certain subcategory of local modules, denoted by $\cCloc$, is in fact a braided tensor category itself. One of the main results of \cite{CKM1} is that this category is equivalent as a braided tensor category to the category of modules of the vertex operator superalgebra $A$ that lie in $\cC$. Moreover there is a functor $\cF$ from $\cC$ to $\cC_A$ and $\cF(X)$ for $X$ an object in $\cC$ is local if and only if the monodromy $M_{A, X} = c_{X, A} \circ c_{A, X}$ of $X$ with $A$ is trivial.  Here $c_{\bullet, \bullet}$ denotes the braiding in $\cC$. 
The induction functor has the important property that it preserves duality \cite[Prop. 2.77]{CKM1}, in particular a rigid object induces to a rigid object. 
Similarly we have the category of modules and local modules for $A_0$ and corresponding induction functor $\cF_{A_0}$. There is then also an induction functor, call it $\cF_0$ from local $A_0$-modules to $A$-modules. For a simple object $M$ in $\cC_{A_0}^{\text{loc}}$ the monodromy with $A_1$ is either one or minus one, since monodromy respects tensor products \cite[Thm. 2.11]{CKL}.
The Ramond twisted 
$A$-modules are objects in $\cC_A$ that are local as $\cC_{A_0}$-modules and for which the monodromy with $A_1$ is minus one. 
The monodromy respects tensor products, in particular if $X, Y$ have each monodromy minus one with $A_1$, then $X \boxtimes Y$ has trivial monodromy. Since induction is monoidal and every object is a subquotient of an induced object this means that the tensor product of two Ramond twisted modules is always local. The same type of argument yields that the tensor product of a local module with a Ramond twisted module is always Ramond twisted. 
Proposition 2.77 of \cite{CKM1} says that $\cF(X)^* \cong \cF(X^*)$. Moreover
Lemma 2.78 of \cite{CKM1} states that if $\cF(X)$ is local then so is $\cF(X^*)$ and the same proof with an additional minus sign gives that if $\cF(X)$ is Ramond twisted, then so is $\cF(X^*)$. 

The category of Ramond twisted modules is denoted by $\cCtw$ and we will sometimes omit the word Ramond. By Yoneda's Lemma one has $\cF = \cF_{A_0} \circ \cF_0$, see Lemma 9.1 of \cite{ACK21} for a proof. In particular $\cF(X)$ is in $\cCtw$ if $M_{A_0, X} = \text{Id}_{A_0 \boxtimes X}$ and $M_{A_1, X} = -\text{Id}_{A_1 \boxtimes X}$. We summarize properties as just explained
\begin{remark} \label{rmk:proptw} ${}$
\begin{enumerate}
\item $\cF(X)$ is rigid if $X$ is rigid. The dual is $\cF(X)^* \cong \cF(X^*)$ and if $\cF(X)$ is local (resp. Ramond twisted) then so is $\cF(X)^*$.
\item $\cCtw$ is a $\cCloc$-module category.
\item  The tensor product of two Ramond twisted modules is always local.
\item $\cF(X)$ for $X$ in $\cC_A$ is Ramond twisted if $M_{A_0, X} = \text{Id}_{A_0 \boxtimes X}$ and $M_{A_1, X} = - \text{Id}_{A_1 \boxtimes X}$.
\end{enumerate}
\end{remark}

\begin{theorem}\label{thm:semisimple} \textup{(Prop. 2.1 of \cite{CL21} or equivalently Thm. 5.2 and Cor. 5.3 of \cite{mcrae2021semisimplicity})}
\noindent The categories of local and Ramond twisted modules of $A$ are semisimple.
\end{theorem}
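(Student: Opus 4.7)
The plan is to reduce the statement to Theorem 5.2 and Corollary 5.3 of \cite{mcrae2021semisimplicity} by verifying their categorical hypotheses in the present setting. The first step is to observe that the ambient category $\cC = \cC_V \boxtimes \cC_W$ is a semisimple braided tensor category with rigid structure: semisimplicity holds since $\cC_V$ is semisimple by hypothesis and $\cC_W$ is semisimple by strong rationality of $W$, and rigidity follows from Theorem \ref{thm:rigid} combined with the modularity of $\cC_W$. The object $A$ is a simple supercommutative algebra in $\cC$ with decomposition \eqref{eq:decA}, so the hypotheses of the cited theorems are met, and semisimplicity of $\cCloc$ follows directly.

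For the Ramond twisted part, the same cited results apply to the even subalgebra $A_0 \supseteq V \otimes W$, whose decomposition in $\cC$ consists of those summands of \eqref{eq:decA} with trivial monodromy with $A_1$. This yields semisimplicity of $\cC_{A_0}^{\text{loc}}$. As noted in the discussion preceding Remark \ref{rmk:proptw}, every simple object of $\cC_{A_0}^{\text{loc}}$ has monodromy $\pm\text{Id}$ with $A_1$, so $\cC_{A_0}^{\text{loc}}$ splits as a direct sum of two full subcategories according to the sign of this monodromy; these recover $\cCloc$ (sign $+1$) and $\cCtw$ (sign $-1$), respectively. Semisimplicity of $\cC_{A_0}^{\text{loc}}$ therefore forces semisimplicity of both $\cCloc$ and $\cCtw$.

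The main technical point, already addressed in the references, is that the sum in \eqref{eq:decA} is typically infinite, so one works not in $\cC$ itself but in a suitable direct-limit completion in which $A$ and $A_0$ live as genuine algebra objects. Ensuring compatibility of the induction functor $\cF$, of rigidity, and of the monodromy calculations with such infinite sums is precisely the framework set up in \cite{CKM1, mcrae2021semisimplicity}, so I would expect no new categorical difficulties beyond those already handled there.
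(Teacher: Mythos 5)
The paper does not give a proof of this statement; it is a direct citation of Prop.~2.1 of \cite{CL21} and Thm.~5.2, Cor.~5.3 of \cite{mcrae2021semisimplicity}. Your proposal tries to fill in why the citation applies, and the overall strategy (establish rigidity of $\cC$ via Theorem \ref{thm:rigid} and modularity of $\cC_W$, then reduce to the even subalgebra $A_0$) is in the spirit of those references. However, there is a genuine error in the way you describe $A_0$. You claim its decomposition in $\cC$ consists of ``those summands of \eqref{eq:decA} with trivial monodromy with $A_1$.'' This is false: for $A$ to be a vertex operator superalgebra at all, \emph{every} summand of $A$ must have trivial monodromy with every other summand, including $A_1$. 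Monodromy is the double braiding $M_{A_1,X}=c_{X,A_1}\circ c_{A_1,X}$, which encodes single-valuedness of the operator product expansion around $z=w$; in a VOSA this holds for all pairs of fields regardless of parity (the parity only governs the sign picked up under analytic continuation, i.e.\ the single braiding composed with multiplication). So ``trivial monodromy with $A_1$'' does not cut out $A_0$ — it is satisfied by all of $A$. The dichotomy $M_{A_1,\,\cdot\,}=\pm 1$ becomes non-trivial only for objects that are not summands of $A$, which is how the paper uses it in the discussion preceding Remark \ref{rmk:proptw}.

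A second, milder imprecision is the claim that the two monodromy eigen-subcategories of $\cC_{A_0}^{\text{loc}}$ ``recover'' $\cCloc$ (sign $+1$) and $\cCtw$ (sign $-1$). Those are full subcategories of local $A_0$-modules, whereas $\cCloc$ and $\cCtw$ consist of $A$-modules; the relationship is via the induction functor $\cF_0$ and its adjoint (restriction), not an identification, and induction need not be an equivalence (e.g.\ when $A_1\boxtimes\,\cdot\,$ has fixed points). Semisimplicity does transfer from $\cC_{A_0}^{\text{loc}}$ to $\cCloc$ and $\cCtw$ for an order-$2$ extension, but that transfer is precisely the content of Cor.~5.3 of \cite{mcrae2021semisimplicity}, not a formal consequence of the monodromy splitting as your writeup suggests.
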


We continue with the same set-up as before. 
Let us set $N_X :=  X \otimes \tau(X)^*$. The multiplication rule $M_{X, Y}^{Z}$ is defined to be one if $N_Z$ appears in the operator product of $N_X$ with $N_Y$, and it is zero otherwise. This is rephrased in categorical terms  in Definition 2.5 of \cite{CKM2}. For an algebra in a rigid vertex tensor category of the form \eqref{eq:decA}, Theorem 3.5 of \cite{CKM2} says that $M_{X, Y}^{Z}=1$ if and only if $\tau(Z)^*$ is a summand of $\tau(X)^* \boxtimes \tau(Y)^*$. This statement is only proven if $A$ is an algebra in \cite{CKM2}, however the argument (which is Section 3.3 of \cite{CKM2}) is exactly the same in the superalgebra case. In particular, $N_X$ together with $V \otimes W$ generates $A$ under operator products if and only if $\tau(X)^*$ generates $\cD_W$ as a fusion ring. 
	\begin{lemma}\label{lem:gen}
\textup{(Consequence of Main Theorem 3.5 of \cite{CKM2})}
\noindent $A$ is generated under operator products by $V \otimes W$ together with any field in $X \otimes \tau(X)^*$ if and only if $\cD_W$ is generated as a fusion ring by the object $\tau(X)^*$.
\end{lemma}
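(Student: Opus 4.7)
The plan is to use Main Theorem 3.5 of \cite{CKM2} as a dictionary translating operator products in $A$ into fusion in $\cD_W$, and then to iterate it. Writing $N_X := X \otimes \tau(X)^*$ for the summands in \eqref{eq:decA}, I first observe that each $N_X$ is irreducible as a $V \otimes W$-module, so any nonzero field $\phi \in N_X$ already generates all of $N_X$ under the OPE action of $V \otimes W$. Hence $A$ is generated under OPE by $V \otimes W$ together with a single nonzero $\phi \in N_X$ if and only if, for every $Y \in \text{Irr}(\cC_V)$, the entire summand $N_Y$ lies in the vertex subalgebra of $A$ produced by iterated OPEs of fields drawn from $N_X$ and $V \otimes W$.

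The cited theorem states that $M_{X_1, X_2}^{Z} = 1$, meaning that $N_Z$ appears in the OPE of $N_{X_1}$ with $N_{X_2}$, exactly when $\tau(Z)^*$ is a summand of $\tau(X_1)^* \boxtimes \tau(X_2)^*$. Although \cite{CKM2} formulates this for algebra extensions, the argument of Section 3.3 there is parity-insensitive and applies verbatim in the superalgebra setting. The second step is then a straightforward induction on $n$, using associativity of both the OPE and the fusion product: $N_Z$ occurs in some $n$-fold OPE of fields from $N_X$ with itself if and only if $\tau(Z)^*$ is a summand of the $n$-fold fusion power $\tau(X)^{*\boxtimes n}$.

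Finally, the condition that $\tau(X)^*$ generates $\cD_W$ as a fusion ring means precisely that every simple object of $\cD_W$ — which by \eqref{eq:decA} is one of the $\tau(Y)^*$ with $Y \in \text{Irr}(\cC_V)$ — appears as a summand of some $\tau(X)^{*\boxtimes n}$. Matching the two formulations via the dictionary above gives both directions of the lemma simultaneously.

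I do not expect any genuine obstacle in this argument. The only mildly delicate point is the inductive promotion of Theorem 3.5 of \cite{CKM2} from binary OPE to $n$-fold OPE; this is handled by recording at each stage that the OPE of a sum $\bigoplus N_{X_i}$ with $N_Y$ lands in the direct sum of those $N_Z$ singled out by the binary fusion rule, so the comparison with iterated fusion in $\cD_W$ is transparent. In essence the proof is just translating "generation of a vertex subalgebra under OPE" into "generation of a fusion subring under $\boxtimes$" via the identity $M_{X,Y}^Z = \dim \Hom_{\cD_W}(\tau(Z)^*, \tau(X)^* \boxtimes \tau(Y)^*)$.
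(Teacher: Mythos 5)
Your argument follows the paper's own reasoning essentially verbatim: invoke Main Theorem 3.5 of \cite{CKM2} to translate the binary rule $M_{X,Y}^Z$ into the fusion rule $\tau(Z)^*\subset\tau(X)^*\boxtimes\tau(Y)^*$, note that the argument of Section 3.3 of \cite{CKM2} is parity-insensitive so it applies to superalgebras, and iterate. The extra observations you make — that irreducibility of $N_X$ over $V\otimes W$ justifies ``any field in $X\otimes\tau(X)^*$,'' and the explicit induction promoting the binary rule to $n$-fold products — are correct elaborations of what the paper leaves implicit; the only cosmetic slip is the final displayed identity $M_{X,Y}^Z=\dim\Hom$, which should be read as an equivalence of vanishing/nonvanishing since $M_{X,Y}^Z$ is by definition $\{0,1\}$-valued while the Hom space may have higher dimension.
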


Finally a very useful theorem is Frobenius reciprocity. 
\begin{theorem}\label{thm:Frec}\textup{(Lemma 2.61 of \cite{CKM1})}

\noindent
Let $\cG: \cC_A \rightarrow \cC$ be the forgetful functor, then for any object $X$ in $\cC$ and $Y$ in $\cC_A$, the two spaces
\[
\text{Hom}_\cC\left(X, \cG(Y)\right) \cong \text{Hom}_{\cC_A}\left(\cF(X), Y\right)
\]
are naturally isomorphic.
\end{theorem}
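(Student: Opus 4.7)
The plan is to prove this as the standard adjunction between induction and restriction for modules over an algebra object in a braided tensor category, adapted to the (super)vertex-algebra setting. Since $\cF(X) = A \boxtimes X$ carries an $A$-action induced by multiplication on the left tensor factor, the adjunction arises by combining the unit $\iota_A: \mathbf{1} \to A$ with the module action $\mu_Y: A \boxtimes Y \to Y$.

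First I would define the two natural maps explicitly. Given $g \in \Hom_{\cC_A}(\cF(X), Y)$, set
\[
\Phi(g) \;:=\; \bigl( X \xrightarrow{\;\sim\;} \mathbf{1} \boxtimes X \xrightarrow{\iota_A \boxtimes \text{id}_X} A \boxtimes X \xrightarrow{\;g\;} Y \bigr),
\]
viewed as a morphism in $\cC$ after applying $\cG$. Conversely, for $f \in \Hom_\cC(X, \cG(Y))$ set
\[
\Psi(f) \;:=\; \bigl( A \boxtimes X \xrightarrow{\text{id}_A \boxtimes f} A \boxtimes \cG(Y) \xrightarrow{\mu_Y} Y \bigr).
\]
Both assignments are manifestly natural in $X$ and $Y$ since they are built out of structure morphisms of $A$ and $Y$ together with the bifunctoriality of $\boxtimes$.

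Next I would check the three categorical verifications. (i) $\Psi(f)$ is an $A$-module map: the diagram expressing $A$-equivariance reduces, after inserting $\text{id}_A \boxtimes f$ on the right tensor factor, to the associativity axiom $\mu_Y \circ (\mu_A \boxtimes \text{id}_Y) = \mu_Y \circ (\text{id}_A \boxtimes \mu_Y)$ for the $A$-module $Y$. (ii) $\Phi \circ \Psi = \text{id}$: applying $\Phi$ to $\Psi(f)$ and using the unit axiom $\mu_Y \circ (\iota_A \boxtimes \text{id}_Y) = \text{id}_Y$ collapses the composition to $f$ itself. (iii) $\Psi \circ \Phi = \text{id}$: here one expresses $g: A \boxtimes X \to Y$ using $A$-equivariance as the composition of $\text{id}_A \boxtimes (g \circ (\iota_A \boxtimes \text{id}_X))$ with $\mu_Y$, which is exactly $\Psi(\Phi(g))$; the unit axiom for $A$ itself (i.e.\ $\mu_A \circ (\iota_A \boxtimes \text{id}_A) = \text{id}_A$) is what permits this rewriting.

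The main obstacle is not the algebraic manipulation, which is entirely formal once $\cC$ is a genuine (braided) tensor category, but rather justifying that all of the above diagrams make sense in the Huang--Lepowsky--Zhang vertex tensor setting, where $\boxtimes$ is the $P(z)$-tensor product and the associativity and unit isomorphisms are nontrivial analytic objects requiring convergence and extension statements. Fortunately these foundational issues are exactly what is established in \cite{HLZ1, HLZ2, HLZ3, HLZ4, HLZ5, HLZ6, HLZ7, HLZ8} and used throughout \cite{CKM1}, so once one grants that $\cC$ is a braided tensor category containing $A$ as a (super)commutative algebra, the proof reduces to the purely categorical adjunction sketched above. The fact that $A$ is a superalgebra rather than an algebra introduces only sign conventions in the definition of $\Psi$ when the braiding is applied, and these are absorbed into the definition of $\cF$ without affecting the bijection.
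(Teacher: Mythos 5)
Your proof is correct, but note that the paper itself does not prove this statement---it is quoted verbatim as Lemma~2.61 of \cite{CKM1}, so there is no ``paper's own proof'' to compare against beyond that citation. What you have reconstructed is precisely the standard free-module/forgetful adjunction for an algebra object in a monoidal category, applied in the HLZ vertex tensor setting, which is the content of the cited lemma. Your remark that the only nontrivial point is that $\cC$ really is a braided tensor category (so that the associator, unitors, and $\boxtimes$ have all the usual coherence) is exactly the right thing to emphasize, and it is exactly what \cite{CKM1} sets up before stating Lemma~2.61. One small bookkeeping slip: in step~(iii), with $\cF(X)=A\boxtimes X$ and the $A$-action on the left factor, the unit axiom for $A$ that you need is the \emph{right} unit axiom $\mu_A\circ(\text{id}_A\boxtimes\iota_A)=r_A$ (together with a triangle identity), not $\mu_A\circ(\iota_A\boxtimes\text{id}_A)$; since both are part of the definition of an algebra object this does not affect the argument, but the wiring in the diagram chase should be corrected. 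Also, (super)commutativity of $A$ plays no role in the adjunction itself, so your final remark about sign conventions can be dropped---the braiding only enters later, when one restricts to local modules.
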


\subsection{Admissible levels for $\mathfrak{g} = \mathfrak{osp}_{1|2n}$}

Let $\mathfrak g$ be a Lie superalgebra equipped with a non-degenerate, supersymmetric, invariant, even bilinear form $B$ on $\mathfrak g$. Then, to this data and each complex number $k \in \mathbb C$, one can associate the affine vertex superalgebra $V^k(\mathfrak g, B)$. We are interested in the cases where $\mathfrak g$ is either a simple Lie algebra or $\mathfrak g = \mathfrak{osp}_{1|2n}$. In these instances, we just write $V^k(\mathfrak g)$ when  $B$ is normalized such that long roots have norm two. The simple quotient of $V^k(\mathfrak g)$ is denoted by $L_k(\mathfrak g)$.

The notion of admissible level is due to Kac and Wakimoto, who computed them explicitly in Proposition 1.1 of \cite{KW08} for simple Lie algebras. 
\begin{proposition} {\textup{ (\cite[Prop.1.1]{KW08})}}
Let $\g$ be a simple Lie algebra over $\C$ of rank $n$ and let $k \in\C$. Let $( \cdot | \cdot )$ be the Killing form on $\g$, normalized such that long roots have norm two. 
 Let $h, h^\vee$ be the Coxeter and dual Coxeter number of $\g$ and $r^\vee$ the lacity, that is $r^\vee =1$ in type $A, D, E$, $r^\vee =2$ in type $B, C, F_4$ and $r^\vee = 3$ for $G_2$. A level $k$ is called admissible if 
\begin{equation}\nonumber
k + h^\vee = \frac{p}{q} \in  \Q_{>0}, \quad (p, q) =1, \quad p, q >0 \ \ \text{and} \ \ \begin{cases} p \geq h^\vee & \ \ (q, r^\vee) =1, \\  p \geq h & \ \ (q, r^\vee) = r^\vee . \end{cases}
\end{equation}
The first case is called principal admissible and the second one coprincipal. A principal admissible level is non-degenerate if $q \geq h$ and a coprincipal one is non-degenerate if $q \geq r^\vee h^\vee$. 
\end{proposition}

The definition is
\begin{definition} \textup{\cite{KW89} (see also Definition 1.1 of \cite{KW08})} A weight $\lambda \in \widehat{h}^*$ is called admissible if 
$(\lambda + \widehat\rho, \alpha^\vee) > 0$ for all $\alpha^\vee \in \widehat{\Delta}^\vee_+$ and the $\mathbb Q$-span of $\widehat{\Delta}^\vee_\lambda$ contains $\widehat{\Delta}^\vee$.
\end{definition}
The notation will be reviewed in a moment. 
A level $k$ is then called admissible if the weight $k \Lambda_0$ is admissible and $\Lambda_0$ is the fundamental weight corresponding to the $0$-th root $\alpha_0$ of the affine Lie algebra. 
Gorelik and Serganova extended the definition to affine Lie superalgebras \cite{GS}, see in particular section 5 for the case of $\g = \mathfrak{osp}_{1|2n}$. We compute these explicitly in the appendix.
\begin{proposition} \textup{(Proposition \ref{prop:osp-adm})}
For $k \in \C$, $\lambda = k \Lambda_0$ is admissible if and only if there exist coprime integers $a \in \Z$ and $b \in \Z_{\geq1}$ such that $\displaystyle k= \frac{a}{b}$ and
\begin{enumerate}
\item $\displaystyle k + h^\vee \geq \frac{n+\frac{1}{2}}{b}$\quad if $(b, 2) = 1$
\item $\displaystyle k + h^\vee \geq \frac{2n-1}{b}$\quad if $(b, 2) = 2$.
\end{enumerate}
\end{proposition}

\begin{remark}\label{rem:admissible}
Let $\displaystyle k = \frac{a}{b}$ be an admissible level for $\mathfrak{osp}_{1|2n}$. Define $u, v$ via $\displaystyle k = -(n+1) + \frac{u}{v}$ and $(u, v) = 1$, and set $\displaystyle \ell = -(n+1) + \frac{u}{2u-v}$, in particular $v=b$, $u = (k+n+1)b$, and $\displaystyle 2u-v = 2\left(k+n+\frac{1}{2}\right)b$.

We are interested in the cases where $k$ is admissible for $\mathfrak{sp}_{2n}$ and $\ell$ is non-degenerate admissible. 

Consider the case $b$ is odd. Then $\displaystyle k+ n + \frac{1}{2} \geq \frac{n+\frac{1}{2}}{b}$ and so $\displaystyle u = (k+n+1)b \geq n+\frac{1}{2}+\frac{b}{2} \geq n+1$ and $\displaystyle 2u-v =2\left(k+n+\frac{1}{2}\right)b \geq 2n+1 >  2n$.
 So $k$ is principal admissible and $\ell$ is non-degenerate principal admissible for
$\mathfrak{sp}_{2n}$. 

Similarly for $b$ even. Then $\displaystyle k+ n + \frac{1}{2} \geq \frac{2n-1}{b}$  and so $\displaystyle u \geq 2n$ and $2u-v \geq 4n-2$ ( $\geq 2(n+1)$ if $n \geq 2$). So $k$ is coprincipal admissible and $\ell$ is non-degenerate coprincipal admissible for
$\mathfrak{sp}_{2n}$. 

If $b$ is odd, then $k$ being admissible  for $\mathfrak{osp}_{1|2n}$ is in fact equivalent to $k$  principal admissible and $\ell$  non-degenerate principal admissible for
$\mathfrak{sp}_{2n}$. However if $b$ is even and if $n>1$, then there are some levels $k$, such that $k$ is not admissible for $\mathfrak{osp}_{1|2n}$ but still is
coprincipal admissible and $\ell$ is non-degenerate coprincipal admissible for
$\mathfrak{sp}_{2n}$. 
\end{remark}

\subsection{Admissible weights for Lie algebras}\label{sec:LA}

Let $\Pi = \{ \alpha_1, \dots, \alpha_n\}$ be the set of positive simple roots, normalized such that the longest root has norm $2$. Let $Q$ be the root lattice. The fundamental coweights $\omega_1^\vee, \dots, \omega_n^\vee$ are the duals of the positive simple roots and they span the coweight lattice $P^\vee$. The coroots are defined by 
$\alpha^\vee = \frac{2}{(\alpha|\alpha)} \alpha$
and the coroot lattice is denoted by $Q^\vee$. Its dual is the weight lattice $P$, spanned by the fundamental weights $\omega_1, \dots, \omega_n$. 
Let $\theta, \theta_s^\vee$ be the longest root and longest short coroot. 
Let $k + h^\vee = \frac{p}{q}$ be admissible, define the sets of admissible weights
\begin{equation}\nonumber
\begin{split}
P(p, q) &:= \left\{ \lambda \in P \ | \ (\lambda | \alpha_i^\vee) \in \Z_{\geq 0}, \ i =1, \dots, n,  \ (\lambda | \theta) \leq  p-h^\vee \right\}, \quad \text{if} \ (q, r^\vee) =1, \\
P(p, q) &:= \left\{ \lambda \in P \ | \ (\lambda | \alpha_i^\vee) \in \Z_{\geq 0}, \ i =1, \dots, n,  \ (\lambda | \theta_s^\vee) \leq  p-h \right\}, \quad \text{if} \ (q, r^\vee) =r^\vee. 
\end{split}
\end{equation}
Let ${}^L\g$ be the Langlands dual Lie algebra of $\g$. This means the roots of ${}^L\g$ are the coroots of $\g$ and vice versa. 
We denote roots, coroots, weights, coweights and their lattices and all other quantities associated to ${}^L\g$ by an additional symbol ${}^L$. 
Since long roots are normalized to have norm $2$ and the short roots have norm $2/r^\vee$ one has the relations
\[
{}^LQ^\vee = \sqrt{r^\vee}Q \qquad \text{and} \qquad {}^LP^\vee = \sqrt{r^\vee}P. 
\]
The dual level ${}^Lk$ to $k$ is defined by 
\[
r^\vee ({}^Lk+{}^Lh^\vee)(k+h^\vee) = 1.
\]
In particular, if $k+h^\vee = \frac{p}{q}$ is non-degenerate principal admissible, then ${}^Lk+{}^Lh^\vee = \frac{q}{r^\vee p}$ is coprincipal for ${}^L\g$. 
Define the sets 
\begin{equation}\nonumber
\begin{split}
{}^LP^\vee(p, q) &:= \left\{ \lambda \in {}^LP^\vee \ | \ (\lambda | \alpha_i) \in \Z_{\geq 0}, \ i =1, \dots, n,  \ (\lambda | {}^L\theta^\vee_s) \leq r^\vee( p-h^\vee) \right\}, \quad \text{if} \ (q, r^\vee) =1, \\
{}^LP^\vee(p, q) &:= \left\{ \lambda \in {}^LP^\vee \ | \ (\lambda | \alpha_i) \in \Z_{\geq 0}, \ i =1, \dots, n,  \ (\lambda | {}^L\theta) \leq  p-h \right\}, \quad \text{if} \ (q, r^\vee) =r^\vee. 
\end{split}
\end{equation}
Since ${}^L\theta_s^\vee = \sqrt{r^\vee}\theta$ rescaling by $\sqrt{r^\vee}$ gives the isomorphism
\[
P(p, q)  \cong {}^LP^\vee(p, q).
\]
In particular, if $k$ is principal admissible, then (note that $h = {}^Lh$ and $h \geq h^\vee$ for any simple Lie algebra)
\begin{equation}\label{eq:embweights}
\begin{split}
{}^LP(q, r^\vee p) &\cong P^\vee(q, r^\vee p) \\
&=  \left\{ \lambda \in P^\vee \ | \ (\lambda | \alpha_i) \in \Z_{\geq 0}, \ i =1, \dots, n,  \ (\lambda | \theta) \leq  q-h \right\} \\
&\subset \left\{ \lambda \in P \ | \ (\lambda | \alpha_i^\vee) \in \Z_{\geq 0}, \ i =1, \dots, n,  \ (\lambda | \theta) \leq  q-h \right\} \\
&\subset \left\{ \lambda \in P \ | \ (\lambda | \alpha_i^\vee) \in \Z_{\geq 0}, \ i =1, \dots, n,  \ (\lambda | \theta) \leq  q-h^\vee \right\} 
= P(q, 1). 
\end{split}
\end{equation}

Finally, we define the subset of admissible weights that lie in the root lattice $P_Q(p, q) := P(p, q) \cap Q$. 
We specialize to type $B_n, C_n$. Let $\epsilon_1, \dots, \epsilon_n$ be an orthonormal basis of $\Z^n$ and we view roots as embedded in $\Z^n$ or a rescaling of it. Some data for $B_n$ is 
\begin{enumerate}
\item simple roots $\alpha_1 = \epsilon_1 -\epsilon_2, \dots, \epsilon_{n-1}-\epsilon_n, \epsilon_n$,
\item simple coroots $\alpha_1^\vee = \alpha_1, \dots, \alpha_{n-1}^\vee=\alpha_{n-1}, \alpha_n^\vee =2\alpha_n$,
\item longest root $\theta=\epsilon_1 +\epsilon_2$,
\item longest short coroot $\theta_s^\vee = 2\epsilon_1$,
\item Coxeter number $h=2n$, dual Coxeter number $h^\vee = 2n-1$. 
\end{enumerate}
The corresponding data for $C_n$ is 
\begin{enumerate}
\item simple roots $\alpha_1 = \frac{\epsilon_1 -\epsilon_2}{\sqrt{2}}, \dots, \frac{\epsilon_{n-1}-\epsilon_n}{\sqrt{2}}, \sqrt{2}\epsilon_n$,
\item simple coroots $\alpha_1^\vee = 2\alpha_1, \dots, \alpha_{n-1}^\vee=2\alpha_{n-1}, \alpha_n^\vee =\alpha_n$,
\item longest root $\theta= \sqrt{2}\epsilon_1$,
\item longest short coroot $\theta_s^\vee = \sqrt{2}(\epsilon_1+\epsilon_2)$,
\item Coxeter number $h=2n$, dual Coxeter numbert $h^\vee = n+1$. 
\end{enumerate}
We want to compare the sets of admissible weights of $B_n$ and $C_n$, so we add a superscript $B, C$ to indicate the type. Let $p\geq n+1$, then
\begin{equation}\nonumber
\begin{split}
P^C(p, 1) &= \left\{ \lambda \in \frac{1}{\sqrt{2}} \mathbb Z \ | \ (\lambda | \sqrt{2}(\epsilon_i-\epsilon_{i+1})) \in \Z_{\geq 0}, \ i =1, \dots, n-1, \  (\lambda | \sqrt{2}\epsilon_n) \in \Z_{\geq 0},\frac{}{} \ (\lambda | \sqrt{2}\epsilon_1) \leq p-n-1 \right\}, \\
P^B(2p-1, 2p) &= \left\{ \mu \in  \mathbb Z \ | \ ( \mu | \epsilon_i-\epsilon_{i+1}) \in \Z_{\geq 0}, \ i =1, \dots, n-1, \  ( \mu | 2\epsilon_n) \in \Z_{\geq 0}, \frac{}{} \ ( \mu 2 | \epsilon_1) \leq 2p-2n-1 \right\}, \\
P^B_Q(2p-1, 2p) &= \left\{ \mu \in  \mathbb Z \ | \ (\mu | \epsilon_i-\epsilon_{i+1}) \in \Z_{\geq 0}, \ i =1, \dots, n-1, \  (\mu | \epsilon_n) \in \Z_{\geq 0}, \frac{}{} \ (\mu | \epsilon_1) \leq p-n-\frac{1}{2} \right\} .\\
 \end{split}
\end{equation}
We observe that the map $\lambda \mapsto \sqrt{2}\lambda$ provides the isomorphism
\begin{equation}\label{eq:v=1}
P^C(p, 1) \cong P^B_Q(2p-1, 2p).
\end{equation}
Next, let $p \geq 2n$ be odd. Then 
\begin{equation}\nonumber
\begin{split}
P^C(p, 2) &= \left\{ \lambda \in \frac{1}{\sqrt{2}} \mathbb Z \, | \, \left(\lambda  | \sqrt{2}(\epsilon_i-\epsilon_{i+1})\right) \in \Z_{\geq 0}, \ i =1, \dots, n-1, \,  (\lambda |  \sqrt{2}\epsilon_n) \in \Z_{\geq 0}, \, \left(\lambda | \sqrt{2}(\epsilon_1+\epsilon_2)\right) \leq p-2n \right\}, \\
P^B(p&-1, p) = \left\{ \mu \in  \mathbb Z \ | \ (\mu | \epsilon_i-\epsilon_{i+1}) \in \Z_{\geq 0}, \ i =1, \dots, n-1, \  (\mu | 2\epsilon_n) \in \Z_{\geq 0}, \frac{}{} \ (\mu | \epsilon_1+\epsilon_2) \leq p-2n \right\}, \\
P^B_Q(p&-1, p) = \left\{ \mu \in  \mathbb Z \ | \ (\mu | \epsilon_i-\epsilon_{i+1}) \in \Z_{\geq 0}, \ i =1, \dots, n-1, \  (\mu | \epsilon_n) \in \Z_{\geq 0}, \frac{}{} \ (\mu | \epsilon_1+\epsilon_2) \leq p-2n \right\}. \\
 \end{split}
\end{equation}
We observe that the map $\lambda \mapsto \sqrt{2}\lambda$ provides the isomorphism
\begin{equation}\label{eq:v=2}
P^C(p, 2) \cong P^B_Q(p-1, p).
\end{equation}

Let now $\g = \mathfrak{osp}_{1|2n}$. This is not a Lie algebra, but a Lie superalgebra. Its simple roots can be identified with the simple roots of $B_n$ where the short roots are odd roots and the long ones are even. Note that $2\alpha$ for $\alpha$ an odd root is an even root and it can be identified with a long root of $C_n$. Weights of $ \mathfrak{osp}_{1|2n}$ can be thus identified with weights of $B_n$ and it turns out that there is a one-to-one correspondence between irreducible finite-dimensional $\mathfrak{osp}_{1|2n}$-modules and irreducible finite-dimensional non-spinor representations of $\mathfrak{so}_{2n+1}$ \cite{RS}. They are both parameterized by weights in $R = Q \cap P^+$, the set of dominant weights that lie in the root lattice of $B_n$. 
\begin{remark}\label{rmk:osp-sp}
Let  $E_\mu$ be the simple highest-weight module of $\mathfrak{osp}_{1|2n}$ of highest weight $\mu$ and let $v_\mu$ be the highest-weight vector. Then $E_\mu$ contains the $\mathfrak{sp}_{2n}$ 
simple highest-weight module $E_{\nu}$ with $\sqrt{2}\nu =\mu$ as submodule and this submodule is generated by $v_\mu$. 

Similarly let $\widetilde V^k(\mu)$ be a $V^k(\mathfrak{osp}_{1|2n})$-module whose top level is isomorphic to $E_\mu$. Then the highest-weight vector generates a $V^k(\mathfrak{sp}_{2n})$-module whose top level is $E_{\nu}$.
\end{remark}

\subsection{Fusion categories of $\W$-algebras}

Let $V^k(\lambda)$ denote the universal  Weyl module of $V^k(\g)$ whose top level is the irreducible highest-weight representation $E_\lambda$ of $\g$ of highest weight $\lambda$. Let $\AL_k(\lambda)$ be its unique graded quotient. 
Let $\W^k(\g)$ the principal $\W$-algebra of $\g$ at level $k$. It is obtained via quantum Hamiltonian reduction from $V^k(\g)$ and the reduction functor is denoted by $H^0_{\text{DS}}( \, \bullet\,)$. Then the reduction $H^0_{\text{DS}}(V^k(\lambda))$ of $V^k(\lambda)$ is a  $\W^k(\g)$-module. 
More generally, there is the  twisted quantum Hamiltonian reduction of Arakawa and Frenkel \cite{AF19}.  These reductions are labelled by elements $\mu$ in the set of dominant coweights, ${}^L{P}^+$ and one denotes them as
\[
T^{\kappa}_{\lambda, \mu} =  H^0_{\text{DS}, \mu}(V^k(\lambda))\qquad \kappa = k+ h^\vee.
\]
One has $T^{\kappa}_{\lambda, \mu} = {}^L{T}^{{}^L{\kappa}}_{\mu, \lambda}$ by \cite[Thm. 2.2]{AF19}, where ${}^L\kappa = {}^Lk +{}^Lh^\vee$.

Let $\W_k(\g)$ be the unique simple quotient of $\W^k(\g)$ and denote the simple quotient of $T^{\kappa}_{\lambda, \mu}$ by $\WL_k(\lambda, \mu)$
Let $k$ be non-degenerate admissible. If $k$ is principal or coprincipal non-degenerate admissible, then $\W_k(\g)$ is strongly rational \cite{Ara-rat} and the simple objects are in the principal admissible case the $\WL_k(\lambda, \mu)$ with $(\lambda, \mu) \in P(p, q) \times {}^LP(q, r^\vee p) \cong P(p, q) \times P^\vee(q, r^\vee p) $ and in the coprincipal case the $\WL_k(\lambda, \mu)$ with $(\lambda, \mu) \in P(p, q) \times {}^LP(q/r^\vee,  p) \cong P(p, q) \times P^\vee(q/r^\vee,  p)$. Call a non-degenerate admissible level of coboundary type if $q= h$ in the principal admissible case or $q = r^\vee {}^Lh^\vee$ in the coprincipal admissible case. If the non-degenerate admissible level is not coboundary then
 two simple modules are isomorphic if and only if they are in the same orbit under a certain diagonal Weyl group action \cite{FKW}, in particular $\WL_k(\lambda, 0) \cong \WL_k(\lambda', 0)$ if and only if $\lambda = \lambda'$. The modular transformations of characters have been computed in \cite{AE-mod} and been used to compute fusion rules in \cite{C-fus}. 
\begin{theorem}\textup{\cite{C-fus}}
Let 
\begin{equation}\label{eq:wzwfusion}
\AL_{\ell-h^\vee}(\lambda) \boxtimes \AL_{\ell-h^\vee}(\nu) \cong \bigoplus_{\phi \in P(\ell, 1)} N_{\lambda, \nu}^{\g_\ell  \  \phi}\   \AL_{\ell-h^\vee}(\phi).
\end{equation}
be the fusion rules of $L_{\ell- h^\vee}(\g)$ for $\ell - h^\vee \in Z_{>0}$. Let $k= -h^\vee + \frac{p}{q}$ be principal non-degenerate admissible. Then 
simple modules are parameterized by a quotient of the subset $P(p, 1) \times P^\vee(q, r^\vee)$ of the set $P(p, 1) \times P(q, 1)$ by \eqref{eq:embweights}.
Set $ \WL_k(\lambda, \lambda') = 0$ for $(\lambda, \lambda') \in P(p, 1) \times P(q, 1) \setminus P(p, 1) \times P^\vee(q, r^\vee)$.
With this parameterization the  
 fusion rules are  for $(\lambda, \lambda'), (\nu, \nu') \in P(p, 1) \times P^\vee(q, r^\vee)$
\begin{equation}
\begin{split}
 \WL_k(\lambda, \lambda')  \boxtimes  \WL_{k}(\nu, \nu')  &\cong \bigoplus_{\substack {\phi \in P(p, 1) \\ \phi' \in P(q, 1)}}  N_{\lambda, \nu}^{\g_p \  \phi}  N_{\lambda', \nu'}^{\g_q \  \phi'}   \WL_{k}(\phi, \phi').
\end{split}
\end{equation}
\end{theorem}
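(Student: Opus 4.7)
The plan is to apply the Verlinde formula. Since $\W_k(\g)$ is strongly rational at a principal non-degenerate admissible level by \cite{Ara-rat}, its module category is a modular tensor category, and all fusion coefficients are determined by the normalized character S-matrix through
\[
N_{IJ}^K \ =\ \sum_M \frac{S_{IM}\, S_{JM}\, S^{-1}_{MK}}{S_{0M}}.
\]

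The central input is the explicit form of the admissible-level S-matrix computed in \cite{AE-mod}. The simple modules are parameterized by $(\lambda, \lambda') \in P(p,q) \times {}^LP(q, r^\vee p)$, and via the identification $P(p, q) \cong P(p, 1)$ together with the embedding ${}^LP(q, r^\vee p) \cong P^\vee(q, r^\vee p) \hookrightarrow P(q, 1)$ from \eqref{eq:embweights}, I would re-express the S-matrix entries in terms of pairs in $P(p,1) \times P(q,1)$, setting $\WL_k(\lambda, \lambda') = 0$ on the complement so that extending the Verlinde sum over the larger index set is harmless. The structural fact to establish is that in these coordinates the S-matrix factorizes,
\[
S_{(\lambda,\lambda'),(\nu,\nu')} \ =\ c\cdot S^{(p)}_{\lambda, \nu}\, S^{(q)}_{\lambda', \nu'},
\]
where $S^{(p)}$ and $S^{(q)}$ are, up to normalization, the Kac--Peterson S-matrices of the integrable affine Lie algebras at levels $p - h^\vee$ and $q - h^\vee$ (the second on the Langlands dual side, identified with the corresponding matrix on $P(q,1)$).

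Once this factorization is in hand, the Verlinde formula immediately separates into a product of two independent integer-level Verlinde sums, each reproducing one of the fusion coefficients $N^{\g_p\ \phi}_{\lambda, \nu}$ and $N^{\g_q\ \phi'}_{\lambda', \nu'}$ of \eqref{eq:wzwfusion}, giving the claimed product rule. The non-coboundary hypothesis enters through \cite{FKW} to guarantee that the diagonal Weyl group action identifying W-algebra simples is trivial on these coordinates, so the labelling is faithful and no overcounting occurs on the right-hand side; the convention $\WL_k(\lambda,\lambda') = 0$ off $P(p,1)\times P^\vee(q, r^\vee)$ absorbs any boundary terms appearing in the extended sum.

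The main obstacle is the factorization of the S-matrix itself. The Arakawa--van Ekeren formulas are built from Kac--Wakimoto admissible characters via quantum Hamiltonian reduction and carry denominator identities and overall normalization factors that must be carefully reorganized before the two tensor factors separate cleanly; the eventual symmetry between the $\g$-factor at level $p - h^\vee$ and the ${}^L\g$-factor at level $q - h^\vee$ is a manifestation of Feigin--Frenkel duality. After this factorization is established, the theorem follows by a direct application of Verlinde together with the known integer-level fusion rules \eqref{eq:wzwfusion}.
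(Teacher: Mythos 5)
Your proposal follows essentially the same route that the paper attributes to \cite{C-fus}: apply Verlinde's formula (phrased in the paper via open Hopf links, which are just normalized $S$-matrix coefficients, so this is not a genuinely different idea) to the Arakawa--van Ekeren modular data for $\W_k(\g)$ at non-degenerate admissible level, and reduce the fusion coefficients to products of integer-level WZW coefficients. The step you flag as the "main obstacle" --- reorganizing the $S$-matrix into a product of Kac--Peterson $S$-matrices for $P(p,1)$ and $P(q,1)$ --- is the same content as the substitution $\zeta \mapsto \zeta^q$ that the paper describes, which identifies the open Hopf links of $\AL_{p-h^\vee}(\lambda)$ with those of $\WL_k(\lambda,0)$ and provides the ring homomorphism between Grothendieck rings. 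So on the factorization point you and the paper agree, and both leave the computation to \cite{AE-mod}.

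There is, however, one genuine gap that the paper explicitly calls out and you do not address. The paper notes that the ``cokernel'' of the $\zeta \mapsto \zeta^q$ ring homomorphism was not treated in \cite{C-fus}: one still has to know that the non-vanishing Hopf links are supported on exactly the pairs in $P(p,1)\times P^\vee(q,r^\vee)$ (up to the diagonal Weyl identification of \cite{FKW}), and not on a smaller subset. In the paper this is settled by identifying the Hopf links with open Hopf links of the quantum group $U_q(\g)$ at $q=e^{\frac{2\pi i}{2r^\vee(k+h^\vee)}}$ via \cite{BK} and then invoking Sawin's description of negligible objects \cite{Sawin}. Your step of ``setting $\WL_k(\lambda,\lambda')=0$ off the parameterizing set'' is a notational convention, not an argument: it conveniently absorbs boundary terms, but it neither establishes that the Verlinde sum over the enlarged index set $P(p,1)\times P(q,1)$ has no spurious contributions, nor that no admissible label inside $P(p,1)\times P^\vee(q,r^\vee)$ gives a negligible object with identically vanishing normalized Hopf links. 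Without the quantum-group/negligibility input, the factorization of the Verlinde sum into two independent integer-level sums is not justified. Also, a small imprecision: the non-coboundary hypothesis does not make the diagonal Weyl action ``trivial''; by \cite{FKW} it guarantees that two simple modules agree iff their labels are in the same Weyl orbit, and the specific consequence the argument uses is only that $\WL_k(\lambda,0)\cong\WL_k(\lambda',0)$ forces $\lambda=\lambda'$, not faithfulness of the whole labelling.
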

The proof of this Theorem used a variant of Verlinde's formula in modular tensor categories, namely that there is an isomorphism given by open Hopf links  between the Grothendieck ring of the category and the endomorphism ring of the direct sum of all inequivalent modules. The open Hopf links are given by normalized modular $S$-matrix coefficients. Let $\zeta = e^{2\pi i /p}$, then the map $\zeta \mapsto \zeta^q$, maps the open Hopf links $\AL_{p-h^\vee}(\lambda)$ of the affine vertex algebra $L_{p-h^\vee}(\g)$ at level $p-h^\vee$ to the corresponding ones, that is $\WL_k(\lambda, 0)$, of the $\W$-algebra $\W_k(\g)$. This map provides a homomorphism of rings. The cokernel of this ring homomorphism has not been discussed in \cite{C-fus}, but it is known:  Note that these Hopf links are given by $q$-characters and in particular coincide with the ones of the corresponding quantum group $U_q(\g)$ for $q = e^{\frac{2\pi i}{2 r^\vee (k + h^\vee)}}$, see e.g. the proof of Theorem 3.3.9 of \cite{BK}. For $k$ non-degenerate principal or coprincipal admissible, the Hopf links vanish for all negligible objects and the non-negligible simple objects are exactly the highest-weight modules of admissible weight at that level, see e.g. Theorem 2 together with Lemma 7 of \cite{Sawin}.

\begin{remark}\label{rmk:prop}
Let $\mathcal C(p, q)$ be the category of $\W_k(\g)$-modules at the non-degenerate  admissible level $k = -h^\vee + \frac{p}{q}$. Let $\mathcal C_L(p, q)$ be the subcategory whose simple objects are isomorphic to modules of type $\WL_k(\lambda, 0)$. Similarly we denote by $\mathcal C_R(p, q)$ be the subcategory whose simple objects are isomorphic to modules of type $\WL_k(0, \lambda')$. We note the properties, see \cite[Theorem 6.1]{C-fus}
\begin{enumerate}
\item The categories $\mathcal C_L(p, q)$ and $\mathcal C_R(p, q)$ are fusion subcategories of $\mathcal C(p, q)$.
\item Let $k = -h^\vee + \frac{p}{v}$ be non-degenerate admissible with $(q, r^\vee) = (v, r^\vee)$, then there is the isomorphism $K[\mathcal C_L(p, q)] \cong K[\mathcal C_L(p, v)]$  of Grothendieck rings if both levels are not coboundary. 
\item $\WL_k(0, \lambda')$ is in the centralizer of $\mathcal C_L(p, q)$ if $\lambda' \in Q$.
\item $\WL_k(\lambda, 0)$ is in the centralizer of $\mathcal C_R(p, q)$ if $\lambda \in Q$.
\end{enumerate}
\end{remark}

\section{The universal affine vertex superalgebra $V^k(\mathfrak{osp}_{1|2n})$}

We recall the notion of equivariant $\W$-algebras using \cite{Arakawa:2018egx}. Let $G$ be an algebraic group and $\kappa \in \C$ generic. The CDO algebra at level $\kappa$, $\CDO_{G, \kappa}$ is a vertex algebra that has an action  of $V^{k}(\g) \otimes V^{\ell}(\g)$ with $\g$ the Lie algebra of $G$ and $\kappa = k+ h^\vee$, $-\kappa = \ell + h^\vee$. It satisfies
\[
\CDO_{G, \kappa} \cong \bigoplus_{\lambda \in P^+} V^k(\lambda) \otimes V^\ell(\lambda^*)
\]
as a vector space graded by conformal weights and weights of $V^{k}(\g) \otimes V^{\ell}(\g)$. Moreover it is a direct sum of both $V^{k}(\g)$ and $V^{\ell}(\g)$-modules in the categories $\KL^k(\g)$ and $\KL^\ell(\g)$ \cite[Prop. 5.6]{Arakawa:2018egx}. Since these categories are completely reducible for non-rational $k, \ell$, it follows that 
\begin{equation}
\CDO_{G, \kappa} \cong \bigoplus_{\lambda \in P^+} V^k(\lambda) \otimes V^\ell(\lambda^*) \qquad\qquad (\kappa \ \text{is generic})
\end{equation}
as $V^{k}(\g) \otimes V^{\ell}(\g)$ for non-rational $k, \ell$. 
The equivariant $\W$-algebra of $G$ at level $\kappa$ with respect to the nilpotent element $f$ in $\g$ is the quantum Hamiltonian reduction corresponding to $f$ on the first factor. In particular
\begin{equation}
\W_{G, f, \kappa} \cong \bigoplus_{\lambda \in P^+} T^{\kappa, f}_{\lambda, 0} \otimes V^\ell(\lambda^*), \qquad T^{\kappa, f}_{\lambda, 0} = H^0_{\text{DS}, f}(V^k(\lambda)), \qquad\qquad (\kappa \ \text{is generic}).
\end{equation}
We restrict to  principal nilpotent elements and will omit the symbol $f$. The $T^{\kappa}_{\lambda, 0}$ are modules for $\W^k(\g)$. 
$\W_{G, \kappa}$ is a strict chiralization of a smooth symplectic variety \cite{Arakawa:2018egx} and hence simple by \cite[Cor. 9.3]{arakawa2019arc}.

\begin{theorem}\label{CDOdecomposition}
Let $F^{2n+1}$ be the vertex superalgebra of $2n+1$ free fermions  and consider $G=Sp(2n)$ and $\g= \mathfrak{sp}_{2n}$. Let $\tau, t$ and $\ell$ be related by $\frac{1}{\tau} + \frac{1}{\ell+h^\vee}=2$, $\ell + t = -2 h^\vee_{\mathfrak{osp}_{1|2n}}$, and let $t$ be generic. There is an embedding $\W^t(\mathfrak{osp}_{1|2n}) \hookrightarrow \W_{Sp(2n), \kappa} \otimes F^{2n+1}$, and $$C^\ell = \text{Com}(\W^t(\mathfrak{osp}_{1|2n}), \W_{Sp(2n), \kappa} \otimes F^{2n+1} )$$ is a simple vertex superalgebra. Moreover, its even and odd parts decompose as $\W^{\tau-h^\vee}(\mathfrak{sp}_{2n}) \otimes V^\ell(\mathfrak{sp}_{2n})$-modules as follows:
\begin{equation}
\begin{split}
C^\ell_{\text{even}}
 &\cong \bigoplus_{\mu \in P^+\cap Q}  T^{\tau}_{\mu, 0} \otimes  V^\ell(\mu), \\
C^\ell_{\text{odd}} &\cong  \bigoplus_{\mu \in P^+\cap P \setminus Q} T^{\tau}_{\mu, 0}  \otimes V^\ell(\mu).  
\end{split}
\end{equation}
Let  $\lambda \in \check{P}^+$ and $C^\ell_\lambda = C^\ell_{\lambda, \text{even}} \oplus C^\ell_{\lambda, \text{odd}}$, such that  
\begin{equation}
\begin{split}
C^\ell_{\lambda, \text{even}}
 &\cong \bigoplus_{\mu \in P^+\cap Q}  T^{\tau}_{\mu, \lambda} \otimes  V^\ell(\mu), \\
C^\ell_{\lambda, \text{odd}} &\cong  \bigoplus_{\mu \in P^+\cap P \setminus Q} T^{\tau}_{\mu, \lambda}  \otimes V^\ell(\mu).  
\end{split}
\end{equation}
Then the $C^\ell_{\lambda}$ are $C^\ell$-modules and 
\begin{equation}
\W_{Sp(2n), \kappa} \otimes F^{2n+1}  \cong \bigoplus_{\lambda \in \check{P}^+\cap \check{Q}} C^\ell_{\lambda} \otimes N^t_{\lambda, 0},
\end{equation}
with $N^t_{\lambda, 0}$ certain inequivalent simple $\W^t(\mathfrak{osp}_{1|2n})$-modules. 
\end{theorem}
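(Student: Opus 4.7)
The plan is to combine four ingredients: the CDO decomposition of $\CDO_{Sp(2n),\kappa}$, principal DS reduction on the left factor, the Feigin--Frenkel and Arakawa--Frenkel dualities used twice, and the Urod property of \cite{ACF} applied to the coset realization $\W^t(\mathfrak{osp}_{1|2n})\cong\text{Com}(V^{\check k+1}(\mathfrak{so}_{2n+1}),V^{\check k}(\mathfrak{so}_{2n+1})\otimes F^{2n+1})$ of \cite{CL21}. The level conditions $\kappa=k+h^\vee$, $2\kappa\check\kappa=1$, $\frac{1}{\tau}+\frac{1}{\ell+h^\vee}=2$, and $\ell+t=-2h^\vee_{\mathfrak{osp}_{1|2n}}$ are arranged precisely so that $\check\tau=\check k+1+\check h^\vee$, which is what makes the second Feigin--Frenkel step land in $\W^{\tau-h^\vee}(\mathfrak{sp}_{2n})$.

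First I would apply the principal DS functor to the left CDO factor, using exactness on $\KL^k(\mathfrak{sp}_{2n})$ and self-duality of $\mathfrak{sp}_{2n}$-weights, to obtain $\W_{Sp(2n),\kappa}\cong\bigoplus_{\mu\in P^+}T^\kappa_{\mu,0}\otimes V^\ell(\mu)$. Tensoring with $F^{2n+1}$ then reduces the theorem to decomposing each $T^\kappa_{\mu,0}\otimes F^{2n+1}$ as a bimodule for $\W^{\tau-h^\vee}(\mathfrak{sp}_{2n})\otimes\W^t(\mathfrak{osp}_{1|2n})$. Next, Feigin--Frenkel and Arakawa--Frenkel identify $T^\kappa_{\mu,0}\cong H^0_{\text{DS},\mu}(V^{\check k}(\mathfrak{so}_{2n+1}))$, and since DS reduction is insensitive to the fermion factor one has $T^\kappa_{\mu,0}\otimes F^{2n+1}\cong H^0_{\text{DS},\mu}\bigl(V^{\check k}(\mathfrak{so}_{2n+1})\otimes F^{2n+1}\bigr)$. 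Decomposing the argument via the coset of \cite{CL21} as $\bigoplus_{\nu\in\check P^+\cap\check Q}V^{\check k+1}(\nu)\otimes N^t_{\nu,0}$ and invoking the Urod property to commute the twisted reduction past the commutant yields
\begin{equation*}
T^\kappa_{\mu,0}\otimes F^{2n+1}\cong\bigoplus_\nu H^0_{\text{DS},\mu}(V^{\check k+1}(\nu))\otimes N^t_{\nu,0}\cong\bigoplus_\nu T^\tau_{\mu,\nu}\otimes N^t_{\nu,0},
\end{equation*}
the last step using Arakawa--Frenkel once more together with $\check\tau=\check k+1+\check h^\vee$. Assembling with $V^\ell(\mu)$ and relabeling $\nu\to\lambda$ delivers the claimed decomposition with $C^\ell_\lambda\cong\bigoplus_{\mu\in P^+}T^\tau_{\mu,\lambda}\otimes V^\ell(\mu)$.

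The even/odd splitting of $C^\ell$ is inherited from the $\Z/2$-grading of $F^{2n+1}$ and, through the identification of Remark \ref{rmk:osp-sp}, matches the parity $\mu\in P^+\cap Q$ versus $\mu\in P^+\cap P\setminus Q$ on $\mathfrak{sp}_{2n}$-weights. Simplicity of $C^\ell$ follows from that of $\W_{Sp(2n),\kappa}$ \cite[Cor.~9.3]{arakawa2019arc} by a standard commutant argument, using multiplicity-one of the vacuum and the pairwise-inequivalent simplicity of the $N^t_{\lambda,0}$. The main obstacle will be the Urod step: $F^{2n+1}$ is not integrable in the strict sense but rather splits into $L_1(\mathfrak{so}_{2n+1})$ and its spinor module, so one must verify that the appropriate form of \cite[Thm.~7.2]{ACF} accommodates this tensor structure and that $H^0_{\text{DS},\mu}$ truly commutes with the $\W^t(\mathfrak{osp}_{1|2n})$-action. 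Careful weight-lattice bookkeeping is also required to ensure that the $\lambda$-index set is exactly $\check P^+\cap\check Q$.
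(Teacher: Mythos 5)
Your overall architecture matches the paper's: CDO decomposition, DS reduction on the left factor, Arakawa--Frenkel duality to turn $T^\kappa_{\mu,0}$ into a twisted reduction of $V^{\check k}(\mathfrak{so}_{2n+1})$, the coset realization of $\W^t(\mathfrak{osp}_{1|2n})$ from \cite{CL21}, and the Urod theorem. But the proposal has a real gap, and it sits precisely where you briefly flag a concern and then move past it.

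Your step ``since DS reduction is insensitive to the fermion factor one has $T^\kappa_{\mu,0}\otimes F^{2n+1}\cong H^0_{\text{DS},\mu}(V^{\check k}(\mathfrak{so}_{2n+1})\otimes F^{2n+1})$'' is not insensitivity; it is exactly the Urod theorem, and as such it is false as written. The reduction on the right-hand side is the \emph{diagonal} reduction at level $\check k+1$, while on the left you have the first-factor reduction tensored with $F^{2n+1}$. Theorem 7.2 of \cite{ACF} says these differ by spectral flow: $H^0_{\text{DS},\mu}(V^{\check k}\otimes F^{2n+1})\cong H^0_{\text{DS},\mu}(V^{\check k})\otimes\sigma^*_\mu F^{2n+1}$. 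Since $F^{2n+1}\cong L_1(\mathfrak{so}_{2n+1})\oplus L_1(\omega_1)$ as a lattice-type module over $V_P$ for $P$ the weight lattice of $\mathfrak{sp}_{2n}$, the spectral flow $\sigma^*_\mu$ acts by $\sigma^*_\mu(V_{P+\nu})\cong V_{P+\nu+\mu}$, hence fixes $F^{2n+1}$ when $\mu\in Q$ and produces the parity-reversed module ${}^\pi F^{2n+1}$ when $\mu\in P\setminus Q$. This spectral-flow twist is not an obstacle to be cleared away; it is the mechanism that produces the even/odd splitting in the statement, with $C^\ell_{\text{even}}$ (resp. $C^\ell_{\text{odd}}$) collecting precisely the summands with $\mu\in Q$ (resp. $\mu\in P\setminus Q$). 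Your explanation of the $\Z/2$-grading via Remark \ref{rmk:osp-sp} is therefore pointing at the wrong lemma; that remark identifies highest-weight $\mathfrak{osp}_{1|2n}$-modules with $\mathfrak{sp}_{2n}$-modules but plays no role in establishing which $\mu$ contribute to the even versus odd part.

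You do correctly identify ``the Urod step'' as the delicate point, but the genuine issue is not whether the theorem ``accommodates'' $F^{2n+1}$ (it does trivially, as a direct sum of two integrable modules at level one). The genuine issue is that the theorem introduces $\sigma^*_\mu$, and if one suppresses that twist the even/odd decomposition simply never appears. The rest of your assembly (applying the functorial reduction to the decomposition from \cite{CL21}, Arakawa--Frenkel a second time to land on $T^\tau_{\mu,\lambda}$, and the level bookkeeping) is correct and matches the paper. The simplicity argument is also fine; the paper invokes simplicity of $\W_{Sp(2n),\kappa}\otimes F^{2n+1}$ together with complete reducibility of the $\W^t(\mathfrak{osp}_{1|2n})$-action and \cite[Prop.~5.4]{CGN}, rather than the multiplicity-one commutant argument you sketch, but these are close cousins.
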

\begin{proof}
We recall the twisted quantum Hamiltonian reduction of Arakawa and Frenkel \cite{AF19} for principal $f$.  These reductions are labelled by elements $\mu$ in the set of dominant coweights, $\check{P}^+$ and one denotes 
\[
T^{\kappa}_{\lambda, \mu} =  H^0_{\text{DS}, \mu}(V^k(\lambda)).
\]
One has $T^{\kappa}_{\lambda, \mu} = \check{T}^{\check{\kappa}}_{\mu, \lambda}$ by \cite[Thm. 2,2]{AF19} with the right hand side reductions of modules of $V^{\ell}({}^L\g)$ with ${}^L\g$ the Langlands dual Lie algebra of $\g$ and $\check{\kappa} =\ell + {}^Lh^\vee$ the shifted level and $r^\vee\check{\kappa}\kappa = 1$ the lacity. In particular if $\g = \mathfrak{sp}_{2n}$, then ${}^L\g = \mathfrak{so}_{2n+1}$ and $h^\vee = n+1$, ${}^Lh^\vee = 2n-1$ and $r^\vee =2$. 
The twisted reduction commutes with tensoring with integrable representations in the following sense, 
\begin{equation}\label{translation}
H^0_{\text{DS}, \mu}(V^k(\lambda) \otimes  L) \cong H^0_{\text{DS}, \mu}(V^k(\lambda)) \otimes \sigma^*_\mu L
\end{equation}
as  $\W^k(\g) \otimes L$-modules \cite[Thm. 7.2]{ACF}, where $L$ is an integrable module (at level $\ell \in \mathbb Z_{>0}$) for $\g$ and the left-hand side is the diagonal reduction at level $k+n$. 
$ \sigma^*_\mu L$ is the spectrally flown module of $L$. This is defined in terms of Li's $\Delta$-operator \cite{Li} and it is again an integrable module at the same level. It is the module twisted by the automorphism induced from the Weyl translation
\[
t_\mu: \lambda \mapsto \lambda + \lambda(K)\mu  - \left( (\lambda | \mu) +\frac{( \mu | \mu )}{2}\lambda(K)\right) \delta. 
\]
If $L=V_N$ is a lattice VOA, then one has $ \sigma^*_\mu (V_N) \cong V_{N+\mu}$ and more generally for a module $V_{N+\nu}$ of $V_N$ one has $ \sigma^*_\mu (V_{N+\nu}) \cong V_{N+\nu+\mu}$. In particular one has $ \sigma^*_\mu V_N \cong V_{N}$ if $\mu \in N$. Let $N = \mathbb Z^n$, so that $V_N$ is the vertex superalgebra of $n$-pairs of free fermions. Then $N$ coincides with the weight lattice $P$ of $\mathfrak{sp}_{2n}$, that is let $\epsilon_1, \dots, \epsilon_n$ be an orthonormal basis of $N$ and set $\alpha_i = \epsilon_1 - \epsilon_{i+1}$ for $i =1, \dots, n-1$ and $\alpha_n = \epsilon_n$. 
Then $\alpha_1, \dots, \alpha_{n-1}, 2\alpha_n$ span a sublattice of $N$, that coincides with the root lattice $Q$ of $\mathfrak{sp}_{2n}$. $V_N$ is a subalgebra of $F^{2n+1}$, the vertex superalgebra of $2n+1$ free fermions and $F^{2n+1} \cong L_1(\mathfrak{so}_{2n+1}) \oplus L_1(\omega_1)$ is an integrable module for $\mathfrak{so}_{2n+1}$ at level one. The even part is  $L_1(\mathfrak{so}_{2n+1})$ and the odd one $L_1(\omega_1)$. We thus see that 
\begin{equation}
\begin{split}
\sigma^*_\mu(L_1(\mathfrak{so}_{2n+1})) &\cong  \begin{cases} L_1(\mathfrak{so}_{2n+1}) & \quad \text{if} \ \mu \in Q \\  L_1(\omega_1)& \quad \text{if} \ \mu \in P\setminus Q \end{cases}, \\
\sigma^*_\mu(L_1(\omega_1)) &\cong  \begin{cases}L_1(\omega_1)  & \quad \text{if} \ \mu \in Q \\  L_1(\mathfrak{so}_{2n+1}) & \quad \text{if} \ \mu \in P\setminus Q \end{cases},\\
\sigma^*_\mu(F^{2n+1}) &\cong  \begin{cases} F^{2n+1} & \quad \text{if} \ \mu \in Q \\  {}^\pi F^{2n+1} & \quad \text{if} \ \mu \in P\setminus Q. \end{cases}
\end{split}
\end{equation}
with ${}^\pi F^{2n+1}$ the parity reverse of $F^{2n+1}$. Recall that $R= Q \cap P^+$ denotes the set of highest weights of tensor representations of $\mathfrak{so}_{2n+1}$, i.e. of modules appearing as direct summands of the iterated tensor product of the standard representation. 
Let $k$ be generic, then by \cite[Theorem 4.1]{CL21} 
\begin{equation}\label{eq:B-coset} 
V^k(\mathfrak{so}_{2n+1}) \otimes F^{2n+1} \cong \bigoplus_{\lambda \in R} V^{k+1}(\lambda) \otimes N^t_{\lambda, 0}
\end{equation}
with  $N^t_{\lambda, 0}$ certain modules of $\W^t(\mathfrak{osp}_{1|2n})$ at level $t$ determined by 
\[
2t+ 2n+1 = \frac{k+2n}{k+2n-1}.
\]
The decomposition \eqref{eq:B-coset} is completely reducible by \cite[Theorem 4.12]{CL20}.
Applying $H^0_{\text{DS}, \mu}$ to \eqref{eq:B-coset} and using \eqref{translation} and $T^{\kappa}_{\lambda, \mu} = \check{T}^{\check{\kappa}}_{\mu, \lambda}$ one gets
\begin{equation}
\begin{split}
T^\kappa_{\mu, 0} \otimes F^{2n+1} &\cong T^{\check{\kappa}}_{0, \mu} \otimes F^{2n+1} 
\cong H^0_{\text{DS}, \mu}(V^k(\mathfrak{so}_{2n+1})) \otimes F^{2n+1} \\
&\cong H^0_{\text{DS}, \mu}(V^k(\mathfrak{so}_{2n+1}) \otimes F^{2n+1}) 
\cong \bigoplus_{\lambda \in R} H^0_{\text{DS}, \mu}(V^{k+1}(\lambda)) \otimes N^t_{\lambda, 0} \\
&\cong \bigoplus_{\lambda \in R} T^{\check{\kappa}+1}_{\lambda, \mu} \otimes N^t_{\lambda, 0}
\cong \bigoplus_{\lambda \in R} T^{\tau}_{\mu, \lambda} \otimes N^t_{\lambda, 0}.
\end{split}
\end{equation}
with $\check{\kappa} = k +2n-1$ and $\tau$ dual to $\check{\kappa}+1$, that is $2\tau (\check{\kappa} +1) =1$. This equality holds for $\mu \in Q$, while for $\mu \in P\setminus Q$ one has to add an additional parity reversal. 
\begin{equation}
\begin{split}
T^\kappa_{\mu, 0} \otimes F^{2n+1} &\cong T^{\check{\kappa}}_{0, \mu} \otimes F^{2n+1} 
\cong H^0_{\text{DS}, \mu}(V^k(\mathfrak{so}_{2n+1})) \otimes F^{2n+1} \\
&\cong H^0_{\text{DS}, \mu}(V^k(\mathfrak{so}_{2n+1}) \otimes {}^\pi F^{2n+1}) 
\cong \bigoplus_{\lambda \in R} H^0_{\text{DS}, \mu}(V^{k+1}(\lambda)) \otimes {}^\pi N^t_{\lambda, 0} \\
&\cong \bigoplus_{\lambda \in R} T^{\check{\kappa}+1}_{\lambda, \mu} \otimes {}^\pi N^t_{\lambda, 0}
\cong \bigoplus_{\lambda \in R} T^{\tau}_{\mu, \lambda} \otimes {}^\pi N^t_{\lambda, 0}.
\end{split}
\end{equation}
It thus follows that (note that $V^\ell(\mu^*) \cong V^\ell(\mu)$ for $\g=\mathfrak{sp}_{2n}$),  
\begin{equation}
\begin{split}
\W_{Sp(2n), \kappa} \otimes F^{2n+1} &\cong \bigoplus_{\mu \in P^+} T^{\kappa}_{\mu, 0} \otimes V^\ell(\mu) \otimes F^{2n+1} \\
&\cong \bigoplus_{\mu \in P^+\cap Q} \bigoplus_{\lambda \in R} T^{\tau}_{\mu, \lambda} \otimes N^t_{\lambda, 0} \otimes V^\ell(\mu) \  \oplus \
 \bigoplus_{\mu \in P^+\cap P \setminus Q} \bigoplus_{\lambda \in R} T^{\tau}_{\mu, \lambda} \otimes {}^\pi N^t_{\lambda, 0} \otimes V^\ell(\mu)  \\
\end{split}
\end{equation}
We thus obtain that 
\begin{equation}
\begin{split}
C^\ell &:= \text{Com}(\W^t(\mathfrak{osp}_{1|2n}), \W_{Sp(2n), \kappa} \otimes F^{2n+1} )
 \cong \bigoplus_{\mu \in P^+\cap Q}  T^{\tau}_{\mu, 0} \otimes  V^\ell(\mu) \ \ \oplus\ \ 
 \bigoplus_{\mu \in P^+\cap P \setminus Q} {}^\pi(T^{\tau}_{\mu, 0}  \otimes V^\ell(\mu)). 
\end{split}
\end{equation}
Since   $\W_{Sp(2n), \kappa} \otimes F^{2n+1}$ is a simple vertex superalgebra on which $\W^t(\mathfrak{osp}_{1|2n})$ acts completely reducibly, we have that $\text{Com}(\W^t(\mathfrak{osp}_{1|2n}), \W_{Sp(2n), \kappa} \otimes F^{2n+1} )$ is simple \cite[Proposition 5.4]{CGN}. Recall that $2\tau (\check{\kappa}+1) = 1 = 2\kappa \check{\kappa}$ and $\ell + h^\vee = -\kappa$. Hence
\[
\frac{1}{\tau} + \frac{1}{\ell + h^\vee} = 2(\check{\kappa}+1)  -\frac{1}{\kappa} = 2 +\frac{1}{\kappa} - \frac{1}{\kappa} = 2
\] 
and $t2+t\ell + 4n+2 = \frac{\check\kappa+1}{\check \kappa} -\kappa -1 =0$.
\end{proof}
\begin{theorem}\label{thm1}
Let $\ell$ be generic, then $C^\ell \cong V^\ell(\mathfrak{osp}_{1|2n})$ and $C^\ell_\lambda \cong V^\ell(\lambda)$. 
\end{theorem}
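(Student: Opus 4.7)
The plan is to reduce the theorem to the character identity of Theorem~\ref{thm:character} proved in the appendix. From Theorem~\ref{CDOdecomposition} we have the graded decomposition
\[
C^\ell \cong \bigoplus_{\mu \in P^+} T^{\tau}_{\mu,0}\otimes V^\ell(\mu),
\]
where the summands with $\mu\in Q$ lie in $C^\ell_{\text{even}}$ and those with $\mu\in P\setminus Q$ in $C^\ell_{\text{odd}}$. It therefore suffices, at generic $\ell$, to produce a vertex superalgebra embedding $\Phi\colon V^\ell(\mathfrak{osp}_{1|2n})\hookrightarrow C^\ell$ and check that both sides have the same graded character.

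For the embedding, I would first locate the even subalgebra $V^\ell(\mathfrak{sp}_{2n})$ inside the $\mu=0$ summand $T^\tau_{0,0}\otimes V^\ell(\mathfrak{sp}_{2n})\cong \W^\tau(\mathfrak{sp}_{2n})\otimes V^\ell(\mathfrak{sp}_{2n})$: the $\mathfrak{sp}_{2n}$-currents sit in the second tensor factor. For the odd generators, note that $\omega_1\in P\setminus Q$ for $\mathfrak{sp}_{2n}$, so $T^\tau_{\omega_1,0}\otimes V^\ell(\omega_1)\subset C^\ell_{\text{odd}}$; the lowest-conformal-weight subspace of $T^\tau_{\omega_1,0}$ is one-dimensional at generic level (via Arakawa--Frenkel duality $T^\tau_{\omega_1,0}\cong {}^LT^{{}^L\tau}_{0,\omega_1}$ combined with the Kac--Wakimoto character formula), and pairing it with the top of $V^\ell(\omega_1)$ produces $2n$ odd primary fields transforming in the standard $\mathfrak{sp}_{2n}$-representation. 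The OPE of any two of these odd fields is forced by $\mathfrak{sp}_{2n}$-equivariance and by the available conformal weights to be a linear combination of an $\mathfrak{sp}_{2n}$-current and a central scalar, and after a normalization of the odd fields these OPEs become the defining relations of $\mathfrak{osp}_{1|2n}$ with even subalgebra at level $\ell$. The universal property of the affine vertex superalgebra then yields the map $\Phi$, which is injective because $V^\ell(\mathfrak{osp}_{1|2n})$ is simple at generic $\ell$.

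To conclude, $\Phi$ is an isomorphism because its source and target have equal graded characters. Indeed $\ch C^\ell = \sum_\mu \ch T^\tau_{\mu,0}\cdot \ch V^\ell(\mu)$ by Theorem~\ref{CDOdecomposition}, and Theorem~\ref{thm:character} asserts precisely that this sum equals $\ch V^\ell(\mathfrak{osp}_{1|2n})$, resummed according to the decomposition of the latter into $V^\ell(\mathfrak{sp}_{2n})\otimes \W^s(\mathfrak{sp}_{2n})$-modules. The module statement $C^\ell_\lambda\cong V^\ell(\lambda)$ follows by the same mechanism: each $C^\ell_\lambda$ is a $V^\ell(\mathfrak{osp}_{1|2n})$-module whose character is given by the analogous identity with $T^\tau_{\mu,\lambda}$ in place of $T^\tau_{\mu,0}$, matching that of the Weyl module $V^\ell(\lambda)$, while by Remark~\ref{rmk:osp-sp} its top level is the irreducible $\mathfrak{osp}_{1|2n}$-module of highest weight $\lambda$, forcing $C^\ell_\lambda\cong V^\ell(\lambda)$. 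The main obstacle is Theorem~\ref{thm:character} itself: it requires explicitly computing the $q$-characters of the twisted reductions $T^\tau_{\mu,0}$ and matching them term-by-term with the $\mathfrak{osp}_{1|2n}$-decomposition, which is the technical heart of the paper and is deferred to the appendix.
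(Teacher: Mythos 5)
Your argument is correct and follows the same route as the paper's own proof: both reduce the theorem to the character identity of Theorem~\ref{thm:character}, and both upgrade that identity to an isomorphism using simplicity of $C^\ell$ (established in Theorem~\ref{CDOdecomposition}) together with simplicity of $V^\ell(\mathfrak{osp}_{1|2n})$ at generic level. The paper's version is terser — it reads the $\mathfrak{sp}_{2n}$-module structure of the weight-one subspace of $C^\ell$ directly off the character identity and observes that the Lie superalgebra it generates must then be $\mathfrak{osp}_{1|2n}$ at level $\ell$; this renders your Arakawa--Frenkel/Kac--Wakimoto detour to verify one-dimensionality of the top of $T^\tau_{\omega_1,0}$ unnecessary (that fact is already contained in the character identity), and your identification $T^\tau_{0,0}\cong\W^\tau(\mathfrak{sp}_{2n})$ should read $T^\tau_{0,0}\cong\W^{\tau-h^\vee}(\mathfrak{sp}_{2n})$ since $\tau$ is the shifted level.
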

\begin{proof}
$C^\ell$ and $V^\ell(\mathfrak{osp}_{1|2n})$ are simple vertex operator superalgebras whose graded characters coincide by Theorems \ref{thm:character} and \ref{CDOdecomposition}. In particular their weight one subspaces coincide as $\mathfrak{sp}_{2n}$-modules. The weight one subspace of $C^\ell$ must generate an affine vertex superalgebra and the only possibility for this is to be of type $\mathfrak{osp}_{1|2n}$ at level $\ell$. Thus  $C^\ell \cong V^\ell(\mathfrak{osp}_{1|2n})$ must hold. The $C^\ell$ are $V^\ell(\mathfrak{osp}_{1|2n})$-modules whose graded character coincides with the one of $V^\ell(\lambda)$ by Theorems \ref{thm:character} and \ref{CDOdecomposition}. In particular their top levels coincide as $\mathfrak{sp}_{2n}$-modules and hence as $\mathfrak{osp}_{1|2n}$-modules. This is only possible if $C^\ell_\lambda \cong V^\ell(\lambda)$.
\end{proof}

\section{The simple affine vertex superalgebra $L_\ell(\mathfrak{osp}_{1|2n})$}\label{sec:adm}

We now want to pass from the generic levels to interesting rational levels. 
\begin{remark}\label{remark:decomposition}
In general if we have a family of vertex operator superalgebras $A^k$, that for generic level decomposes as a direct sum of modules for a commuting pair of subalgebras $V^k, W^k$
\[
A^k = \bigoplus_{\lambda \in I} V^k_\lambda \otimes W^k_\lambda, 
\]
then at any specific level $\ell$ this decomposition might not be completely reducible, as summands might have non-trivial submodules and there might be extensions between different modules appearing at this specific level. However any simple composition factor of $A^\ell$ must be a composition factor of $V^\ell_\lambda \otimes W^\ell_\lambda$ for some $\lambda \in I$. The same statement applies of course for the simple quotient $A_\ell$ of $A^\ell$. 
Let $V_\ell, W_\ell$ be the simple quotients of $V^\ell, W^\ell$. If we assume that
\begin{enumerate}
\item $A^k$ is an ordinary module for $V^k \otimes W^k$.
\item The categories of ordinary modules of $V_\ell$ and  $W_\ell$ are both semisimple.
\item  Let $I^V_\ell$ denote the set of simple objects of $V_\ell$, then for $\lambda \in I$, $L$ in  $I^V_\ell$  is a composition factor of $V^\ell_\lambda$ implies $L$ is the simple quotient of $V^\ell_\lambda$, $L \cong L^\ell_\lambda$.
\item  Let $I^W_\ell$ denote the set of simple objects of $W_\ell$, then for $\lambda \in I$, $M$ in  $I^W_\ell$  is a composition factor of $W^\ell_\lambda$ implies $M$ is the simple quotient of $V^\ell_\lambda$, $M \cong M^\ell_\lambda$.
\end{enumerate}
Then the only possibility for $A_\ell$ is to be of the form
\[
A_\ell = \bigoplus_{\lambda \in J} L^\ell_\lambda \otimes M^\ell_\lambda \otimes M(\lambda). 
\]
with $J \subset I$ the subset of those weights $\lambda$ with the property that $ L^\ell_\lambda$ in $I^V_\ell$ and $ M^\ell_\lambda$ in $I^W_\ell$. Here $M(\lambda) \in \{0, \mathbb C\}$ is a multiplicity space. The same reasoning holds for ordinary modules of $A^k$. 

Let us now assume that $V^k$ is an affine vertex algebra and $W^k$ is a principal $W$-algebra. Assume that $V_\ell$ is at admissible level and $W_k$ is at non-degenerate admissible level. Then (2) of the assumptions holds by the main Theorems of \cite{Ara-adm,Ara-rat}, assumption (3) holds by Theorem 0.5 of \cite{GK11} and assumption (4) by Corollary 10.9 of \cite{Ara-rat} together with Theorem 0.5 of \cite{GK11}.
\end{remark}

Let $\g =\mathfrak{sp}_{2n}$ and $\ell = -h^\vee + \frac{p}{q}$ be non-degenerate admissible. We prefer to parameterize modules by weight labels $(\lambda, \mu)$ with $\lambda$ a weight of $\mathfrak{sp}_{2n}$ and $\mu$ one of $\mathfrak{so}_{2n+1}$, i.e. the simple modules are denoted by $\WL_\ell(\lambda, \mu)$ with $\lambda \in P^C(p, q)$ and $\mu \in P^B(q, r^\vee p)$ if $\ell$ is principal admissible and $\mu \in P^B(q/r^\vee, p)$ if $\ell$ is coprincipal admissible.

\begin{theorem}\label{thm2}
Let $k = -h^\vee + \frac{u}{v}$ be admissible and $\ell = -h^\vee + \frac{u}{2u-v}$ non-degenerate admissible for $\mathfrak{sp}_{2n}$. 
Let $P^B := P^B(q, r^\vee p)$ if $\ell$ is principal admissible and $P^B:= P^B(q/r^\vee, p)$ if $\ell$ is coprincipal admissible and $P^B_Q := P^B \cap Q$. 
Then 
\begin{enumerate}
\item $L_k(\mathfrak{osp}_{1|2n})$ is  a  $L_k(\mathfrak{sp}_{2n}) \otimes \W_\ell(\mathfrak{sp}_{2n})$-module. 
\item 
\[
L_k(\mathfrak{osp}_{1|2n}) \cong \bigoplus_{\lambda \in P(u, v) } \AL_k(\lambda) \otimes \WL_\ell(\lambda, 0)
\]
and for $\mu \in P_Q^B$
\[
\AL_k(\mu) \cong \bigoplus_{\lambda \in P(u, v) } \AL_k(\lambda) \otimes \WL_\ell(\lambda, \mu)
\]
as $L_k(\mathfrak{sp}_{2n}) \otimes \W_\ell(\mathfrak{sp}_{2n})$-modules.
\item The  $\AL_k(\mu)$  with $\mu \in P_Q^B$ are a complete list of inequivalent simple ordinary $L_k(\mathfrak{osp}_{1|2n})$-modules.
\item The category of ordinary $L_k(\mathfrak{osp}_{1|2n})$-modules is a semisimple, rigid vertex tensor supercategory, i.e. it is a fusion supercategory.
\end{enumerate}
\end{theorem}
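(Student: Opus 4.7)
\emph{Proof plan.} The strategy is to combine Theorem \ref{thm1} with the theory of vertex superalgebra extensions from Section 2.1. First I would start from the generic-level decomposition of Theorem \ref{thm1} and specialize it to the admissible levels $k$ and $\ell$ in the statement, then pass to the simple quotient. Writing $J$ for the maximal proper ideal of $V^k(\mathfrak{osp}_{1|2n})$, the images of $V^k(\mathfrak{sp}_{2n})$ and $\W^\ell(\mathfrak{sp}_{2n})$ inside $L_k(\mathfrak{osp}_{1|2n})$ are simple affine, respectively simple $\W$-subalgebras of the claimed levels, proving (1). By complete reducibility of $\cO_k(\mathfrak{sp}_{2n})$ at admissible $k$ and of $\W_\ell(\mathfrak{sp}_{2n})$-modules at non-degenerate admissible $\ell$, the $L_k(\mathfrak{sp}_{2n}) \otimes \W_\ell(\mathfrak{sp}_{2n})$-module $L_k(\mathfrak{osp}_{1|2n})$ is a direct sum of simple tensor products $\AL_k(\lambda) \otimes \WL_\ell(\lambda, 0)$, and the indexing set is precisely $P(u,v)$ since these are the $\lambda$ for which both factors are nonzero, using the weight-set identifications \eqref{eq:v=1} and \eqref{eq:v=2} to match $B$- and $C$-type admissible weights.

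For the $\AL_k(\mu)$-decomposition in (2) and the classification in (3), I would apply the induction functor $\cF$ of Section 2.1 with $V\otimes W = L_k(\mathfrak{sp}_{2n}) \otimes \W_\ell(\mathfrak{sp}_{2n})$ and $A = L_k(\mathfrak{osp}_{1|2n})$. By Remark \ref{rmk:prop}(3), every $\WL_\ell(0,\mu)$ with $\mu \in P^B_Q$ lies in the centralizer of the fusion subcategory $\cC_L$ containing the $\W$-summands of $A$, so its monodromy with $A$ is trivial and $\cF(\WL_\ell(0,\mu))$ is a local $A$-module. The explicit branching stated in (2) then comes from combining the decomposition of $A$ with the $\W$-algebra fusion rules recalled from \cite{C-fus}. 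Identifying the top space of $\cF(\WL_\ell(0,\mu))$ via Remark \ref{rmk:osp-sp} as the simple $\mathfrak{osp}_{1|2n}$-module $E_\mu$ gives $\cF(\WL_\ell(0,\mu)) \cong \AL_k(\mu)$. Conversely, any simple ordinary $L_k(\mathfrak{osp}_{1|2n})$-module is by Frobenius reciprocity (Theorem \ref{thm:Frec}) a quotient of the induction of some simple summand in its restriction; the monodromy constraint in Remark \ref{rmk:prop}(3) then forces the $\W$-label to lie in $P^B_Q$, establishing (3).

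Part (4) follows directly from Section 2.1: Theorem \ref{thm:semisimple} gives semisimplicity of the category of local $A$-modules, and Theorem \ref{thm:rigid} gives rigidity together with the super-braid-reversed equivalence with the fusion subcategory $\cD_W \subset \cC_L(u,2u-v)$ generated by the $\WL_\ell(\lambda,0)$ for $\lambda \in P(u,v)$. Finiteness of the simple object set is inherited from finiteness of $P^B_Q$. The main obstacle I anticipate is the first paragraph: showing that the universal decomposition from Theorem \ref{thm1} descends intact to the simple quotient, with exactly the summands indexed by $\lambda \in P(u,v)$ surviving and no others. This rests on the weight-matching isomorphisms \eqref{eq:v=1} and \eqref{eq:v=2} together with a careful analysis of which Weyl-module top weights can be admissible at the given level, plus a direct identification of the weight-one component of $L_k(\mathfrak{osp}_{1|2n})$ to confirm that the affine subalgebra really sits at level $k$.
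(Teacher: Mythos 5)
Your plan relies on specializing Theorem \ref{thm1} to admissible levels and then ``passing to the simple quotient,'' and you yourself flag this as the main obstacle. It is indeed a genuine gap, and the paper's proof avoids it entirely by not invoking Theorem \ref{thm1} at all. Two points make the specialization unavailable as you describe it. First, Theorem \ref{thm1} is stated only for generic $\ell$; at admissible level the Kazhdan--Lusztig-type category for $V^k(\mathfrak{osp}_{1|2n})$ is not semisimple, so the universal decomposition need not survive as a direct sum. Second, even granting a decomposition of the universal algebra into $\AL_k(\lambda) \otimes \WL_\ell(\lambda,0)$-type pieces, the simple quotient could a priori kill some summands; showing that every $\lambda \in P(u,v)$ survives is precisely the hard multiplicity statement. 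Your assertion that the index set is $P(u,v)$ ``since these are the $\lambda$ for which both factors are nonzero'' is not an argument: nonvanishing of $\AL_k(\lambda)\otimes\WL_\ell(\lambda,0)$ does not imply it appears in $L_k(\mathfrak{osp}_{1|2n})$. The paper proves this in two stages: for $v=1,2$ it uses the induction functor, Frobenius reciprocity, Remark \ref{rmk:osp-sp}, and the bijections \eqref{eq:v=1}, \eqref{eq:v=2}; for general $v$ it uses Lemma \ref{lem:gen} together with the fact (Remark \ref{rmk:prop}(2)) that the Grothendieck ring of $\cC(\W_\ell)_L$ depends only on the numerator and the co/principal type of the level, so generation by $\WL_\ell(\omega_1,0)$ transports from $v=1,2$ to all $v$. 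Your sketch has no replacement for either stage.

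A second gap is in part (1). You write that the images of $V^k(\mathfrak{sp}_{2n})$ and $\W^\ell(\mathfrak{sp}_{2n})$ inside $L_k(\mathfrak{osp}_{1|2n})$ ``are simple'' subalgebras, but this is exactly what must be proved and is not automatic: the image of $V^k(\g)$ in a quotient can be any intermediate homomorphic image. The paper establishes simplicity by a chain of citations: $\mathrm{Com}(\widetilde L_k(\mathfrak{sp}_{2n}), L_k(\mathfrak{osp}_{1|2n}))$ is a homomorphic image of $\W^\ell(\mathfrak{sp}_{2n})$ by \cite[Theorem 8.1]{CL-cosets}, it is simple by \cite[Theorem 4.1]{ACK21}, the double coset is simple by \cite[Lemma 2.1]{ACKL}, and then \cite[Theorem 3.4]{AEM} forces $\widetilde L_k(\mathfrak{sp}_{2n}) \cong L_k(\mathfrak{sp}_{2n})$. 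None of this appears in your sketch. Finally, a smaller inaccuracy in (4): Theorem \ref{thm:rigid} gives rigidity of $\cC_V = \cO_k(\mathfrak{sp}_{2n})$ (the base category), not of $\cO_k(\mathfrak{osp}_{1|2n})$ directly; rigidity of the latter comes from the braided monoidal equivalence $\cH \colon \cC(\W_\ell(\mathfrak{sp}_{2n}))_R^Q \to \cO_k(\mathfrak{osp}_{1|2n})$ that the paper constructs (essential surjectivity from part (3), full faithfulness from Frobenius reciprocity). Your later parts correctly reach for the induction-functor machinery, but the core multiplicity argument and the simplicity of the subalgebras are missing.
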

\begin{proof} ${}$

\noindent $(1) $ 
Let $\widetilde L_{k}(\mathfrak{sp}_{2n})$ be the image of $V^k(\mathfrak{sp}_{2n})$ in $ L_k(\mathfrak{osp}_{1|2n})$.
By \cite[Theorem 8.1]{CL-cosets} the coset $\text{Com}(\widetilde L_{k}(\mathfrak{sp}_{2n}), L_k(\mathfrak{osp}_{1|2n}))$ is a homomorphic image of $\W^\ell(\mathfrak{sp}_{2n})$. By \cite[Theorem 4.1]{ACK21} this coset is simple, i.e. 
$\text{Com}(\widetilde L_{k}(\mathfrak{sp}_{2n}), L_k(\mathfrak{osp}_{1|2n})) \cong \W_\ell(\mathfrak{sp}_{2n})$. Since $ \W_\ell(\mathfrak{sp}_{2n})$ is strongly rational, $\text{Com}( \W_\ell(\mathfrak{sp}_{2n}), L_k(\mathfrak{osp}_{1|2n}))$ is simple by \cite[Lemma 2.1]{ACKL}. It follows that 
$\widetilde L_{k}(\mathfrak{sp}_{2n})\cong L_{k}(\mathfrak{sp}_{2n})$ by \cite[Theorem 3.4]{AEM}.
Hence
$L_k(\mathfrak{osp}_{1|2n})$ is a  $L_k(\mathfrak{sp}_{2n}) \otimes \W_\ell(\mathfrak{sp}_{2n})$-module. 

\noindent $(2)$ for $v =1, 2$.
Let $\cC$ be the category of ordinary $L_k(\mathfrak{sp}_{2n}) \otimes \W_\ell(\mathfrak{sp}_{2n})$-modules. The category of ordinary $L_k(\mathfrak{sp}_{2n})$-modules is semisimple \cite{Ara-adm} and a vertex tensor category \cite{CHY} and since the category of $\W_\ell(\mathfrak{sp}_{2n})$-modules is a modular tensor category \cite{H-mod, H-ver}
as $\W_\ell(\mathfrak{sp}_{2n})$ is rational \cite{Ara-rat}. Thus Theorem 5.5 of \cite{CKM2} applies, that is $\cC$ is the Deligne product of the category of ordinary modules of 
$L_k(\mathfrak{sp}_{2n})$ and the category of $\W_\ell(\mathfrak{sp}_{2n})$-modules. 
It follows that $L_k(\mathfrak{osp}_{1|2n})$ is a commutative superalgebra object, call it $A$, in $\cC$ by \cite{CKL}. 
Hence the category of ordinary 
$L_k(\mathfrak{osp}_{1|2n})$ -modules is a vertex tensor category and equivalent to the category $\cCloc$ of local $A$-modules in $\cC$ \cite{CKM1}.

Since $A =L_k(\mathfrak{osp}_{1|2n})$ is a homomorphic image of $V^k(\mathfrak{osp}_{1|2n})$  it must  be of the form (recall Remark \ref{remark:decomposition})
\[
A =  L_k(\mathfrak{osp}_{1|2n}) \cong \bigoplus_{\lambda \in P(u, v) } \AL_k(\lambda) \otimes \WL_\ell(\lambda, 0) \otimes M(\lambda)
\]
with $M(\lambda) \in \{ 0, \C\}$. We have to show that $M(\lambda) = \C$. 
The subcategory of ordinary $L_k(\mathfrak{sp}_{2n})$-modules whose simple objects are the $\AL_k(\lambda)$ with  $M(\lambda) = \C$ is a rigid tensor subcategory of ordinary $L_k(\mathfrak{sp}_{2n})$-modules by Theorem \ref{thm:rigid}.
Let $\cD$ be the corrsponding subcategory of $\cC$, i.e. simple objects are simple $L_k(\mathfrak{sp}_{2n})$-modules in this rigid subcategory times simple 
$\W_\ell(\mathfrak{sp}_{2n})$-modules.
In particular $A =  L_k(\mathfrak{osp}_{1|2n})$ is an object in $\cD$ and we can consider the category of local $A$-modules in $\cD$, which we denote by $\cDloc$. 
The category $\cDloc$ is semisimple by Theorem \ref{thm:semisimple}. $\cDloc$ is a subcategory of $\cCloc$ that is closed under submodules; in particular, a simple object in $\cDloc$ is also simple in $\cCloc$. 
Let $\cF$ be the induction functor from $\cD$ to $\cDloc$. 
Note that 
\[
X_\mu = A \boxtimes (L_k(\mathfrak{sp}_{2n})) \otimes \WL_\ell(0, \nu)))  = \bigoplus_{\lambda \in P(u, v) } \AL_k(\lambda) \otimes \WL_\ell(\lambda, \mu) \otimes M(\lambda)
\]
as object in $\cC$. 
Let $P^B := P^B(q, r^\vee p)$ if $\ell$ is principal admissible and $P^B:= P^B(q/r^\vee, p)$ if $\ell$ is coprincipal admissible. The $X_\mu := \cF(L_k(\mathfrak{sp}_{2n})) \otimes \WL_\ell(0, \mu))$ is in $\cDloc$ if $\mu \in P^B_Q$ by Remark \ref{rmk:prop} (3).
Let $P^B_Q:= P^B\cap Q$. By Frobenius reciprocity, Theorem \ref{thm:Frec}, for $\mu, \nu \in P^B_Q$
\begin{equation}\nonumber
\begin{split}
\text{Hom}_{\cDloc}\left( X_\mu, X_\nu \right) &=  
\text{Hom}_{\cDloc}\left(\cF(L_k(\mathfrak{sp}_{2n}) \otimes \WL_\ell(0, \mu)), \cF(L_k(\mathfrak{sp}_{2n}) \otimes \WL_\ell(0, \nu))  \right)  \\
&\cong \text{Hom}_{\cD}\left(L_k(\mathfrak{sp}_{2n})) \otimes \WL_\ell(0, \mu), A \boxtimes (L_k(\mathfrak{sp}_{2n})) \otimes \WL_\ell(0, \nu)))  \right)  \\
&\cong \text{Hom}_{\cD}\left(L_k(\mathfrak{sp}_{2n})) \otimes \WL_\ell(0, \mu), L_k(\mathfrak{sp}_{2n})) \otimes \WL_\ell(0, \nu))  \right) \\
&\cong  \delta_{\mu, \nu} \C. 
\end{split}
\end{equation}
Thus the $X_\mu$ are simple ordinary $A =  L_k(\mathfrak{osp}_{1|2n})$-modules. On the other hand the ordinary module of highest weight $\mu$ must be of the form
\[
\AL_k(\mu) \cong \bigoplus_{\lambda \in P(u, v) } \AL_k(\lambda) \otimes \WL_\ell(\lambda, \mu) \otimes M(\lambda, \mu)
\]
with $M(\lambda, \mu) \in \{ 0, \C\}$. 
and so we can conclude that $M(\lambda, \mu) = M(\lambda)$ and $X_\mu \cong \AL_k(\mu)$.
By Remark \ref{rmk:osp-sp} and semisimplicity of ordinary $L_k(\mathfrak{sp}_{2n})$-modules, the $L_k(\mathfrak{osp}_{1|2n})$-module $\AL_k(\mu)$ viewed as an $L_k(\mathfrak{sp}_{2n})$-module contains the $L_k(\mathfrak{sp}_{2n})$-module $\AL_k(\nu)$ with $\mu = \sqrt{2} \nu$ as submodule. 
It follows that $M(\nu) = \C$ if $\sqrt{2}\nu \in P^B_Q$. Since $P^B_Q \cong P(u, v)$ for $v=1, 2$ by \eqref{eq:v=1} and \eqref{eq:v=2} we have proven our second claim in these two cases. 

\noindent $(2)$ for general $v$. Let $\cO_k(\mathfrak{sp}_{2n})$ be the category of ordinary modules of $L_k(\mathfrak{sp}_{2n})$ and $\cC(\W_\ell(\mathfrak{sp}_{2n}))$ be the category of $\W_\ell(\mathfrak{sp}_{2n})$-modules. $L_k(\mathfrak{osp}_{1|2n})$ is weakly generated by the odd dimension one fields, these correspond to the top level of $\AL_k(\omega_1) \otimes \WL_\ell(\omega_1, 0)$ ($\omega_1$ is the first fundamental weight which is the highest weight of the standard representation of $\mathfrak{sp}_{2n}$). By Lemma \ref{lem:gen} $\cC(\W_\ell(\mathfrak{sp}_{2n}))_L$ is generated by $\WL_\ell(\omega_1, 0)$ for $v = 1, 2$. By Remark \ref{rmk:prop} the Grothendieck ring only depends on the numerator of the shifted level and whether the level is principal or coprincipal non-degenerate admissible. Hence $\cC(\W_\ell(\mathfrak{sp}_{2n}))_L$ is generated by $\WL_\ell(\omega_1, 0)$ for all $v$. 
By Lemma \ref{lem:gen} all simple $\cC(\W_\ell(\mathfrak{sp}_{2n}))_L$-modules appear in the decomposition of $L_k(\mathfrak{osp}_{1|2n})$, i.e. $M(\lambda) = \C$ for all $\lambda \in  P(u, v)$.

\noindent $(3)\ \ $ Let $X$ be a simple ordinary $L_k(\mathfrak{osp}_{1|2n})$-module. Since any indecomposable ordinary module of the universal affine vertex operator superalgebra is a homomorphic image of an universal Weyl module $V^k(\mu)$ with $\mu$ a dominant weight of $\mathfrak{so}_{2n+1}$ that is non-spinor and since $X$ is necessarily a $L_k(\mathfrak{sp}_{2n}) \otimes \W_\ell(\mathfrak{sp}_{2n})$-module it follows that 
\[
 X \cong \bigoplus_{\lambda \in P(u, v) } \AL_k(\lambda) \otimes \WL_\ell(\lambda, \mu) \otimes M(\lambda, \mu)
\]
with $\mu \in P^B_Q$ and we already proved that in this case $X \cong \AL_k(\mu)$.

\noindent $(4) \ \ $ Let $\cC(\W_\ell(\mathfrak{sp}_{2n}))_R^Q$ be the subcategory of $\cC(\W_\ell(\mathfrak{sp}_{2n}))_R$ whose weight labels lie in $Q$ (i.e. they are non-spinor). The functor $\cH: \cC(\W_\ell(\mathfrak{sp}_{2n}))_R^Q \rightarrow \cO_k(\mathfrak{osp}_{1|2n})$ defined by
\begin{equation}
\begin{split}
\cH(X) = \cF(L_k(\mathfrak{sp}_{2n}) \otimes X ), \qquad \cH(f) =  \cF(\text{Id}_{L_k(\mathfrak{sp}_{2n})} \otimes f)
\end{split}
\end{equation}
for $X$ an object and $f$ a morphism in $\cC(\W_\ell(\mathfrak{sp}_{2n}))_R^Q$ is braided monoidal as a composition of braided monoidal functors.
It 
is fully faithful by Frobenius reciprocity and it is essentially surjective, i.e. it is an equivalence. Thus the stated properties of $ \cO_k(\mathfrak{osp}_{1|2n})$ are true as they are true for $\cC(\W_\ell(\mathfrak{sp}_{2n}))_R^Q$.
\end{proof}
\begin{corollary}\label{cor1} Let $k$ be principal or coprincipal admissible for $\mathfrak{sp}_{2n}$. 
\begin{enumerate}
\item $\cO_k(\mathfrak{sp}_{2n})$ is rigid.
\item $\cO_k(\mathfrak{sp}_{2n})$ is generated by $\AL_k(\omega_1)$. 
\item There is a super-braid-reversed equivalence $\cO_k(\mathfrak{sp}_{2n}) \cong \cC(\W_\ell(\mathfrak{sp}_{2n}))_L$ sending $\AL_k(\lambda)$ to $\WL_\ell(\lambda, 0)^*$. 
\item There is a super-braided equivalence $\cC(\W_\ell(\mathfrak{sp}_{2n}))_R^Q \cong  \cO_k(\mathfrak{osp}_{1|2n})$ sending $\WL_\ell(0, \mu)$ to $\AL_k(\mu)$. 
\end{enumerate}
\end{corollary}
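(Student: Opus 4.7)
All four items are functorial repackagings of Theorem \ref{thm2}, so my plan is to apply the Section 2.1 machinery to the extension $A = L_k(\mathfrak{osp}_{1|2n}) \supset L_k(\mathfrak{sp}_{2n}) \otimes \W_\ell(\mathfrak{sp}_{2n})$ together with the decomposition from Theorem \ref{thm2}(2). Setting $V = L_k(\mathfrak{sp}_{2n})$ and $W = \W_\ell(\mathfrak{sp}_{2n})$, the decomposition
\[
A \cong \bigoplus_{\lambda \in P(u,v)} \AL_k(\lambda) \otimes \WL_\ell(\lambda, 0)
\]
has the shape of \eqref{eq:decA} with $\tau(\AL_k(\lambda))^* = \WL_\ell(\lambda, 0)$. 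Theorem \ref{thm:rigid} then immediately delivers item (1), rigidity of $\cC_V = \cO_k(\mathfrak{sp}_{2n})$, together with a super-braid-reversed equivalence from $\cC_V$ to the subcategory $\cD_W \subset \cC_W$ whose simple objects are the $\WL_\ell(\lambda, 0)^*$ for $\lambda \in P(u, v)$. To upgrade this into item (3), I would identify $\cD_W$ with $\cC(\W_\ell(\mathfrak{sp}_{2n}))_L$: the latter is a fusion subcategory by Remark \ref{rmk:prop}(1), hence is closed under duality, and its simple objects are by definition the $\WL_\ell(\lambda, 0)$; so the two collections of simples coincide under the bijection $\lambda \mapsto \lambda^*$, giving $\cD_W = \cC(\W_\ell(\mathfrak{sp}_{2n}))_L$ and the assignment $\AL_k(\lambda) \mapsto \WL_\ell(\lambda, 0)^*$.

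For item (2), I would feed Lemma \ref{lem:gen} the fact that $L_k(\mathfrak{osp}_{1|2n})$ is weakly generated over its affine subalgebra by its odd weight-one fields, which sit in the summand $\AL_k(\omega_1) \otimes \WL_\ell(\omega_1, 0)$ of the decomposition above. The lemma then forces $\cD_W$ to be generated as a fusion ring by $\WL_\ell(\omega_1, 0)^*$, equivalently (since fusion rings are closed under duals) by $\WL_\ell(\omega_1, 0)$. Pulling this back through the equivalence of item (3) yields that $\cO_k(\mathfrak{sp}_{2n})$ is tensor-generated by $\AL_k(\omega_1)$.

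For item (4), I would reuse the functor $\cH(X) := \cF(L_k(\mathfrak{sp}_{2n}) \otimes X)$ from the proof of Theorem \ref{thm2}(4), now restricted to $\cC(\W_\ell(\mathfrak{sp}_{2n}))_R^Q$ and landing in $\cO_k(\mathfrak{osp}_{1|2n})$. It is braided monoidal because it is the composition of the Deligne-tensor embedding with the (super-)braided monoidal induction $\cF$ into $\cCloc$. Full faithfulness follows from Frobenius reciprocity (Theorem \ref{thm:Frec}) combined with the Hom-computation already performed in the proof of Theorem \ref{thm2}(2), and essential surjectivity is the classification of simple ordinary $A$-modules in Theorem \ref{thm2}(3). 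Tracing $\WL_\ell(0, \mu)$ through the definition of $\cH$ recovers $X_\mu = \AL_k(\mu)$ from that same proof. The principal obstacle I expect is entirely bookkeeping: one must confirm that every $\WL_\ell(\lambda, 0)$ with $\lambda \in P(u, v)$ actually appears in the decomposition of $A$ (i.e.\ $M(\lambda) = \C$, which is Theorem \ref{thm2}(2)) and that duality is a permutation of this simple-index set. Once these are in place, the remainder is a direct assembly of the results from Section 2.1 and Theorem \ref{thm2}.
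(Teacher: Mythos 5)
Your proposal is correct and follows essentially the same route as the paper's own (very terse) proof: item (1) and the super-braid-reversed equivalence of item (3) come from Theorem \ref{thm:rigid} applied to the decomposition of $L_k(\mathfrak{osp}_{1|2n})$ in Theorem \ref{thm2}(2), item (2) comes from Lemma \ref{lem:gen} pulled back through the equivalence of item (3), and item (4) reuses the functor $\cH = \cF(L_k(\mathfrak{sp}_{2n}) \otimes -)$ from the proof of Theorem \ref{thm2}(4) with Frobenius reciprocity for full faithfulness and Theorem \ref{thm2}(3) for essential surjectivity. The only thing the paper leaves implicit that you spell out is the identification $\cD_W = \cC(\W_\ell(\mathfrak{sp}_{2n}))_L$ via closure of the fusion subcategory under duals, which is the right justification.
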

\begin{proof}
The first and third satement follow from Theorem \ref{thm:rigid}. The second one from Lemma \ref{lem:gen}. The last equivalence is given by the functor 
sending $\WL_\ell(0, \mu)$ to $\cF(L_k(\mathfrak{sp}_{2n} \otimes \WL_\ell(0, \mu)) \cong L_k(\mu)$ which is essentially surjective by the previous theorem and fully faithful by Frobenius reciprocity, Theorem \ref{thm:Frec}.
\end{proof}

\begin{corollary}\label{cor2}
The category of Ramond twisted modules is semisimple and the simple objects are parameterized by $\mu \in P^B \setminus P^B_Q$ and they decompose as
\[
\AL_k(\mu) \cong \bigoplus_{\lambda \in P(u, v) } \AL_k(\lambda) \otimes \WL_\ell(\lambda, \mu).
\]
\end{corollary}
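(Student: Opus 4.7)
The plan is to parallel the induction argument used in parts (2) and (3) of Theorem \ref{thm2}, replacing the local module category $\cCloc$ by the Ramond twisted category $\cCtw$, where $A = L_k(\mathfrak{osp}_{1|2n})$ and $\cC$ is the category of ordinary $L_k(\mathfrak{sp}_{2n}) \otimes \W_\ell(\mathfrak{sp}_{2n})$-modules. Semisimplicity of $\cCtw$ is immediate from Theorem \ref{thm:semisimple}, so the real content is to classify the simples and obtain their decomposition.

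To construct candidate simple Ramond twisted modules, for each $\mu \in P^B \setminus P^B_Q$ I set $Y_\mu := L_k(\mathfrak{sp}_{2n}) \otimes \WL_\ell(0, \mu)$ in the rigid subcategory $\cD$ of $\cC$ used in the proof of Theorem \ref{thm2}, and form the induction $X_\mu := \cF(Y_\mu)$. The first task is to verify that $X_\mu$ lies in $\cCtw$, i.e., using Remark \ref{rmk:proptw}(4), that $M_{A_0, Y_\mu} = \text{Id}$ while $M_{A_1, Y_\mu} = -\text{Id}$. The $\Z/2\Z$-grading on $A$ places each summand $\AL_k(\lambda) \otimes \WL_\ell(\lambda, 0)$ into $A_0$ when $\lambda$ corresponds to a weight of $P^B_Q$ and into $A_1$ otherwise. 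Remark \ref{rmk:prop}(3) gives triviality of the monodromy between $\WL_\ell(0, \mu)$ and $\WL_\ell(\lambda, 0)$ whenever $\lambda \in Q$; because $A$ is itself a commutative superalgebra object of $\cC$, its self-monodromy on $A_1$ must equal $-\text{Id}$, and combined with the $P/Q$-coset analysis this forces the monodromy sign of $Y_\mu$ with the summands to be $+1$ on $A_0$ and $-1$ on $A_1$, which is exactly the Ramond twisted condition.

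Granting the monodromy computation, Frobenius reciprocity (Theorem \ref{thm:Frec}) together with the decomposition of $A$ from Theorem \ref{thm2}(2) yields
\[
\Hom_{\cCtw}(X_\mu, X_\nu) \cong \Hom_\cC(Y_\mu, A \boxtimes Y_\nu) \cong \delta_{\mu, \nu} \C,
\]
so the $X_\mu$ are pairwise inequivalent simple Ramond twisted $A$-modules, and their $L_k(\mathfrak{sp}_{2n}) \otimes \W_\ell(\mathfrak{sp}_{2n})$-decomposition is read off as
\[
X_\mu \cong A \boxtimes Y_\mu \cong \bigoplus_{\lambda \in P(u, v)} \AL_k(\lambda) \otimes \WL_\ell(\lambda, \mu).
\]
For exhaustiveness, any simple object $X$ of $\cCtw$ is in particular an $L_k(\mathfrak{sp}_{2n}) \otimes \W_\ell(\mathfrak{sp}_{2n})$-module; arguing as in the proof of Theorem \ref{thm2}(3) together with Remark \ref{rmk:osp-sp}, its top level is a simple highest-weight $\mathfrak{osp}_{1|2n}$-module of some weight $\mu$, and the condition $M_{A_1, X} = -\text{Id}$ (which distinguishes Ramond twisted from local) forces $\mu$ into the spinor coset $P^B \setminus P^B_Q$, so $X \cong X_\mu$ for a unique such $\mu$.

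The main obstacle is the monodromy computation identifying exactly which induced objects are Ramond twisted rather than local: one must pin down the $\pm 1$ pairing between $\WL_\ell(\lambda, 0)$ and $\WL_\ell(0, \mu)$ across the two cosets of $P/Q$, and verify that the resulting parity structure matches the one imposed on $A$ by its being a commutative superalgebra object. Once this is in hand, the remainder is a clean transcription of the ordinary-module argument into the Ramond twisted setting, since induction, semisimplicity, and Frobenius reciprocity apply uniformly to both cases.
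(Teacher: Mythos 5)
Your construction of the candidate twisted modules $X_\mu := \cF(L_k(\mathfrak{sp}_{2n}) \otimes \WL_\ell(0,\mu))$ and the Frobenius reciprocity argument establishing their simplicity and their decomposition are in line with the paper, but two parts of the proposal diverge from the paper's actual proof, and one of them contains a genuine gap.

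First, on the monodromy step: you argue that the supercommutativity of $A$ constrains its self-monodromy on $A_1$ to be $-\text{Id}$ and that this ``combined with the $P/Q$-coset analysis'' forces the desired signs. This inference is not correct as stated: the self-monodromy of $A_1$ does not by itself determine the monodromy of $Y_\mu = L_k(\mathfrak{sp}_{2n}) \otimes \WL_\ell(0,\mu)$ with $A_1$. The paper simply computes directly that the monodromy of $\WL_\ell(0,\mu)$ with the summand $\WL_\ell(\lambda,0)$ of $A$ equals $e^{2\pi i(\mu\mid\lambda)}$, which is $+1$ exactly when $\lambda\in Q$ (the summands forming $A_0$) and $-1$ otherwise (the summands forming $A_1$), given $\mu\in P^B\setminus P^B_Q$. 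You correctly identify this as the point needing work, but the sketch you give would not close it.

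Second and more seriously, the exhaustiveness argument is not the same as the paper's and the step you rely on is not available. You propose to argue ``as in the proof of Theorem \ref{thm2}(3)'': that argument uses the structural fact that every indecomposable \emph{ordinary} module of $V^k(\mathfrak{osp}_{1|2n})$ is a quotient of a Weyl module $V^k(\mu)$ with $\mu$ dominant non-spinor, so that its top level is a prescribed $\mathfrak{osp}_{1|2n}$-module. For Ramond twisted modules no analogous classification is established in the paper or in its cited references, and the top level of a simple Ramond twisted module is not automatically a simple highest-weight $\mathfrak{osp}_{1|2n}$-module that you can feed into Remark \ref{rmk:osp-sp}. The paper sidesteps this entirely with a short categorical argument: given any simple Ramond twisted $X$ and any $Y$ in the known class $\cO^{\text{R}}_k(\mathfrak{osp}_{1|2n})$, one has $X \cong A \boxtimes X \hookrightarrow (Y^*\boxtimes Y)\boxtimes X \cong Y^*\boxtimes(Y\boxtimes X)$; here $Y\boxtimes X$ is local since the tensor of two Ramond twisted modules is local, and $\cO^{\text{R}}_k(\mathfrak{osp}_{1|2n})$ is a module category over the local modules that is closed under duals, so $X$ lies in $\cO^{\text{R}}_k(\mathfrak{osp}_{1|2n})$. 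If you want to take your more hands-on route, you would need to first prove a classification of indecomposable Ramond twisted highest-weight modules of $V^k(\mathfrak{osp}_{1|2n})$ by spinor weights $\mu\in P^B\setminus P^B_Q$; without it, the exhaustiveness claim does not follow.
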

\begin{proof}
We use the properties stated in Remark \ref{rmk:proptw} in our setting. 

Let $\mu \in P^B \setminus P^B_Q$, then $e^{2\pi i \mu \lambda}$ is one for $\lambda \in  Q$ and minus one otherwise. This means that the monodromy of $L_k(\mathfrak{sp}_{2n}) \otimes \WL_\ell(0, \mu)$ with the even part of $L_k(\mathfrak{osp}_{1|2n})$ is one and with the odd part it is minus one. This is precisely the condition that $\AL_k(\mu) := \cF(L_k(\mathfrak{sp}_{2n}) \otimes \WL_\ell(0, \mu))$  is a Ramond twisted module.

 Let $\cO^{\text{R}}_k(\mathfrak{osp}_{1|2n})$ be the category whose simple objects are such $\AL_k(\mu)$.
Let  $\cC(\W_\ell(\mathfrak{sp}_{2n}))_R^{P\setminus Q}$ be the subcategory of $\cC(\W_\ell(\mathfrak{sp}_{2n}))_R$ whose simple objects are the $\WL_\ell(0, \nu))$ with $\nu \in P^B \setminus P^B_Q$. Since monodromy respects tensor products, $\cC(\W_\ell(\mathfrak{sp}_{2n}))_R^{P\setminus Q}$ must be a module category for $\cC(\W_\ell(\mathfrak{sp}_{2n}))_R^Q$ and the tensor product of two objects in  $\cC(\W_\ell(\mathfrak{sp}_{2n}))_R^{P\setminus Q}$ must be in $\cC(\W_\ell(\mathfrak{sp}_{2n}))_R^Q$. 
Since induction is monoidal, it follows that $\cO^{\text{R}}_k(\mathfrak{osp}_{1|2n})$ is a $\cO_k(\mathfrak{osp}_{1|2n})$-module category as well and the tensor product of two objects in $\cO^{\text{R}}_k(\mathfrak{osp}_{1|2n})$  must be local. With $\W_\ell(0, \mu)$ in $\cC(\W_\ell(\mathfrak{sp}_{2n}))_R^{P\setminus Q}$, the same must be true for its dual. Induction preserves duality and hence the dual of any object in $\cO^{\text{R}}_k(\mathfrak{osp}_{1|2n})$ is in $\cO^{\text{R}}_k(\mathfrak{osp}_{1|2n})$ as well. 
 
Consider an arbitrary twisted module $X$ and $Y$ in $\cO^{\text{R}}_k(\mathfrak{osp}_{1|2n})$. Let $Y^*$ be the dual of $Y$, then 
\[
 X \cong L_k(\mathfrak{osp}_{1|2n}) \boxtimes X  \subset (Y^* \boxtimes Y) \boxtimes X \cong Y^* \boxtimes (Y\boxtimes X). 
\] 
Since the tensor product of two twisted modules is local and since $\cO^{\text{R}}_k(\mathfrak{osp}_{1|2n})$ is a $\cO_k(\mathfrak{osp}_{1|2n})$-module category, it follows that $X$ is in $\cO^{\text{R}}_k(\mathfrak{osp}_{1|2n})$.
\end{proof}

\appendix

\section{Admissible levels}

Let $\mathfrak{g}=\mathfrak{osp}_{1|2n}$ with a non-degenerate even supersymmetric invariant bilinear form $(\cdot|\cdot)$, $\mathfrak{h}$ be a Cartan subalgebra of a Lie superalgebra $\mathfrak{osp}_{1|2n}$, $\Delta$ be the root system of $\mathfrak{osp}_{1|2n}$ with respect to $\mathfrak{h}^*$, $\Pi=\{\alpha_1, \ldots, \alpha_n\}$ be a set of simple roots of $\Delta$. Then the highest root of $\mathfrak{osp}_{1|2n}$ is equal to $\theta = 2\alpha_1 + \cdots + 2\alpha_n$. Suppose that $\alpha_n$ is a (unique) non-isotropic odd simple root, the bilinear form $(\cdot|\cdot)$ on $\mathfrak{osp}_{1|2n}$ is normalized as $(\theta|\theta) = 2$, and $\alpha_i$'s satisfy that
\begin{align*}
&(\alpha_i|\alpha_i) = 1,\quad
(\alpha_i|\alpha_{i+1}) = -\frac{1}{2},\quad
i=1, \ldots, n-1,\\
&(\alpha_n|\alpha_n) = \frac{1}{2},\quad
(\alpha_i|\alpha_j) = 0,\quad
|i-j|>1.
\end{align*}
Let $\widetilde{\mathfrak{g}}=\mathfrak{osp}_{1|2n}[t, t^{-1}] \oplus \C K \oplus \C D$ be the (untwisted) affine Lie superalgebra of $\mathfrak{osp}_{1|2n}$ equipped with the following Lie superbrackets:
\begin{align*}
&[a\otimes t^{m_1}, b\otimes t^{m_2}] = [a,b]\otimes t^{m_1+m_2} + m_1(a|b)\delta_{m_1+m_2,0}K,\\
&[D, a\otimes t^{m_1}]=m_1 a\otimes t^{m_1},\quad
[K, \widetilde{\mathfrak{g}}]=0
\end{align*}
for $a, b \in \mathfrak{osp}_{1|2n}$ and $m_1, m_2 \in \Z$. Let $\widetilde{\mathfrak{h}} = \mathfrak{h} \oplus \C K\oplus\C D$ be a Cartan subalgebra of $\widetilde{\mathfrak{g}}$. Then the bilear form on $\mathfrak{h}$ extends to $\widetilde{\mathfrak{h}}$ such that $(K|D)=1$ and $(K|K) = (D|D) = (h|K) = (h|D) =0$ for all $h\in \mathfrak{h}$. Define a lienar isomorphism $\widetilde{\nu} \colon \widetilde{\mathfrak{h}}^* \rightarrow \widetilde{\mathfrak{h}}$ by $(\widetilde{\nu}(\alpha)|h) = \alpha(h)$. Then $\widetilde{\mathfrak{h}}^*$ has a non-degenerate bilinear form by $(\alpha|\beta) = (\widetilde{\nu}(\alpha)|\widetilde{\nu}(\beta))$. Let $\widehat{\Delta}$ be the root system of $\widetilde{\mathfrak{g}}$ with respect to $\widetilde{\mathfrak{h}}^*$ and $\widehat{\Pi} = \{\alpha_0\}\sqcup\Pi$ be a set of simple roots of $\widehat{\Delta}$. Set an imaginary root $\delta:=\alpha_0+\theta$ in $\widehat{\Delta}$ and $\Lambda_0 \in \widetilde{\mathfrak{h}}^*$ such that $\delta(D)=\Lambda_0(K)=1$ and $\delta(h)=\Lambda_0(h)=\delta(K)=\Lambda_0(D)=0$ for all $h \in \mathfrak{h}$. We have $\widetilde{\mathfrak{h}}^* = \mathfrak{h}^* \oplus \C\delta \oplus \C \Lambda_0$, $\widetilde{\nu}(\delta)=K$ and $\widetilde{\nu}(\Lambda_0)=D$. Denote by $\alpha^\vee = 2\widetilde{\nu}(\alpha)/(\alpha|\alpha) \in \widetilde{\mathfrak{h}}$ for $\alpha \in \widetilde{\mathfrak{h}}^*$ if $(\alpha|\alpha) \neq 0$. Then
\begin{align*}
&\alpha_i^\vee = 2\,\widetilde{\nu}(\alpha_i),\quad
i=1, \ldots, n-1,\\
&\alpha_n^\vee = 4\,\widetilde{\nu}(\alpha_n),\quad
\alpha_0^\vee = (\delta-\theta)^\vee = K - \theta^\vee.
\end{align*}
In particular, $\theta^\vee = \widetilde{\nu}(2\alpha_1 + \cdots + 2\alpha_n) = \alpha_1^\vee + \cdots + \alpha_{n-1}^\vee+\frac{1}{2}\alpha_n^\vee$. Recall that the Weyl vector $\rho \in \mathfrak{h}^*$ of $\mathfrak{osp}_{1|2n}$ is defined by $\rho(\alpha^\vee_i) = 1$ for all $\alpha_i \in \Pi$. Define an affine Weyl vector $\widehat{\rho} \in \widetilde{\mathfrak{h}}^*$ by $\widehat{\rho} = \rho + h^\vee\Lambda_0$, where
\begin{align*}
h^\vee = \rho(\theta^\vee) +1 = n + \frac{1}{2}
\end{align*}
is the dual Coxter number of $\mathfrak{osp}_{1|2n}$. Then $\widehat{\rho}(\alpha^\vee) = 1$ for all $\alpha \in \widehat{\Pi}$.

A root $\alpha \in \widehat{\Delta}$ is called even (resp. odd) if the root space $\widetilde{\mathfrak{g}}_\alpha$ is even (resp. odd). Let $\widehat{\Delta}_{\bar{0}}$ (resp. $\widehat{\Delta}_{\bar{1}}$) be the set of all even (resp. odd) roots. A root $\alpha \in \widehat{\Delta}_{\bar{0}}$ is called principal if $\alpha \in \widehat{\Pi}$ or $\alpha/2 \in \widehat{\Pi}$. Let $\widehat{\Pi}_\mathrm{pr}$ be the set of all principal roots. Then $\widetilde{\mathfrak{g}}^*_{\pm\alpha}$ are one-dimensional and they generate a Lie subalgebra $\mathfrak{sl}_2$ of $\widetilde{\mathfrak{g}}^*$. We have $\widehat{\Pi}_\mathrm{pr} = \{\alpha_0, \alpha_1, \ldots, \alpha_{n-1}, 2\alpha_n\}$. For $\alpha \in \widehat{\Pi}_\mathrm{pr}$, a simple reflection $r_\alpha \in \operatorname{GL}(\widetilde{\mathfrak{h}}^*)$ is well-defined. Then the affine Weyl group of $\widetilde{\mathfrak{g}}^*$ is defined by $\widehat{W}=\langle r_\alpha | \alpha \in \widehat{\Pi}_\mathrm{pr}\rangle_\mathrm{grp} \subset \operatorname{GL}(\widetilde{\mathfrak{h}}^*)$. For $\alpha, \lambda \in \widetilde{\mathfrak{h}}^*$, define a translation operator
\begin{align*}
t_\alpha(\lambda):=\lambda+\lambda(K)\alpha-((\lambda|\alpha)+\frac{1}{2}(\alpha|\alpha)\lambda(K))\delta.
\end{align*}
Then $r_{\alpha_0}r_{\theta}=t_{\theta}$ and $t_{w(\alpha)} = wt_{\alpha}w^{-1}$, $t_\alpha t_\beta = t_{\alpha+\beta}$ for $w \in W := \langle r_\alpha | \alpha \in \widehat{\Pi}_\mathrm{pr}\cap\Delta\rangle_\mathrm{grp}$ and $\alpha, \beta \in \widetilde{\mathfrak{h}}^*$. Thus $\widehat{W} \simeq W \ltimes M$ with $M=\Span_\Z\{w(\theta) \mid w \in W\} = \Span_\Z\{\alpha \in \Delta \mid (\alpha|\alpha)=2\}$. Let $\Delta^+$ be the set of positive roots in $\Delta$ and $\widehat{\Delta}^+ := \Delta^+ \sqcup\{\alpha + m\delta \mid \alpha \in \Delta, m \in \Z_{\geq1}\} \sqcup \{ m\delta \mid m \in \Z_{\geq0}\} \subset \widehat{\Delta}$. For $\lambda \in \widetilde{\mathfrak{h}}^*$, let
\begin{align*}
&\widehat{\Delta}(\lambda) := \{ \alpha \in \widehat{W}\widehat{\Pi}_\mathrm{pr} \mid \lambda(\alpha^\vee) \in \Z\},\quad
\widehat{\Delta}(\lambda)^+ := \widehat{\Delta}(\lambda) \cap \widehat{\Delta}^+,\\
&\widehat{\Pi}(\lambda) := \{ \beta \in \widehat{\Delta}(\lambda)^+ \mid {}^\nexists \alpha \in \widehat{\Delta}(\lambda)\ \mathrm{s.t.}\ 0<r_\alpha(\beta)<\beta\},
\end{align*}
where $\lambda \geq \mu \iff \lambda - \mu \in \Z_{\geq0}(\Delta\cap\Q_{\geq0}\widehat{\Pi}_\mathrm{pr})$. Then $\lambda$ is called admissible in the sense of \cite{KW08, GS} if
\begin{enumerate}
\item $(\lambda+\widehat{\rho})(\alpha^\vee) > 0$ for $\alpha \in \widehat{\Pi}(\lambda)$.
\item $\Q\widehat{\Delta}(\lambda) = \Q\widehat{\Delta}$.
\end{enumerate}
\begin{proposition}\label{prop:osp-adm}
For $k \in \C$, $\lambda = k \Lambda_0$ is admissible if and only if there exist coprime integers $a \in \Z$ and $b \in \Z_{\geq1}$ such that $\displaystyle k= \frac{a}{b}$ and
\begin{enumerate}
\item $b$ is odd $\Rightarrow$ $\displaystyle k + h^\vee \geq \frac{n+\frac{1}{2}}{b}$,
\item $b$ is even $\Rightarrow$ $\displaystyle k + h^\vee \geq \frac{2n-1}{b}$.
\end{enumerate}
\begin{proof}
The condition (2) implies that $k=\lambda(\alpha_0^\vee) \in \Q$. Hence $k$ is of the form $\displaystyle k= \frac{a}{b}$ with $a \in \Z$ and $b \in \Z_{\geq1}$ such that $(a, b) =1$. Notice that $(\alpha|\alpha)=1$ or $2$ if $\alpha \in \widehat{W}\widehat{\Pi}_\mathrm{pr}$. Thus $\widehat{W}\widehat{\Pi}_\mathrm{pr}\cap\Delta = \Delta_\mathrm{long}\sqcup\Delta_\mathrm{mid}$, where $\Delta_\mathrm{long} = \{ \alpha \in \Delta \mid (\alpha|\alpha)=2\}$ and $\Delta_\mathrm{mid} = \{ \alpha \in \Delta \mid (\alpha|\alpha)=1\}$. Since
\begin{align*}
\lambda((\alpha+m\delta)^\vee)=\lambda\left(\alpha^\vee+\frac{2mK}{(\alpha|\alpha)}\right)=\frac{2m}{(\alpha|\alpha)}\cdot\frac{a}{b}
\end{align*}
for $\alpha \in \Delta_\mathrm{long}\sqcup\Delta_\mathrm{mid}$, we have\\
\begin{align*}
\widehat{\Delta}(\lambda)=
\begin{cases}
\displaystyle \{bm\delta+\alpha|m\in\Z, \alpha \in \Delta_\mathrm{long}\sqcup\Delta_\mathrm{mid}\} &\ \mathrm{if}\ b\ \mathrm{is}\ \mathrm{odd},\\
\displaystyle \{bm\delta+\alpha\mid m\in\Z, \alpha \in \Delta_\mathrm{long}\}
\sqcup\left\{\frac{b}{2}m\delta+\alpha\,\middle|\, m\in\Z, \alpha \in \Delta_\mathrm{mid}\right\}&\ \mathrm{if}\ b\ \mathrm{is}\ \mathrm{even}.
\end{cases}
\end{align*}
Therefore $\lambda$ is admissible if and only if\\
(i) $(\lambda+\widehat{\rho})(\alpha^\vee) \in \Z_{\geq1}$ for all $\alpha \in \widehat{\Delta}(\lambda)^+$ if $b$ is odd;\\
(ii) $(\lambda+\widehat{\rho})(\alpha^\vee) \in \frac{1}{2}\Z_{\geq1}$ for all $\alpha \in \widehat{\Delta}(\lambda)^+$ if $b$ is even.

Note that we have the following useful formula
\begin{align*}
\widehat{\Delta}(\lambda)=\left\{\alpha \in \widehat{W}\widehat{\Pi}_\mathrm{pr} \,\middle|\, \alpha/2\notin\widehat{\Delta}\,\&\,(\lambda+\widehat{\rho})(\alpha^\vee)\in\Z\ \mathrm{or}\ \alpha/2\in\widehat{\Delta}\,\&\,(\lambda+\widehat{\rho})(\alpha^\vee) \in \Z + \frac{1}{2}\right\}.
\end{align*}
In case $b$ is odd, $\widehat{\Pi}(\lambda)=\{b\delta-\theta, \alpha_1, \ldots, \alpha_{n-1},2\alpha_n\}$. Then, since
\begin{align*}
(b\delta-\theta)^\vee = bK-\theta^\vee,\quad
\lambda+\widehat{\rho} = (k+h^\vee)\Lambda_0+\rho,\quad
\rho(\theta^\vee)=h^\vee-1,
\end{align*}
the condition (1) is equivalent to
\begin{align*}
(\lambda+\widehat{\rho})\left((b\delta-\theta)^\vee\right)=(\lambda+\widehat{\rho})(bK-\theta^\vee)=b(k+h^\vee)-(h^\vee-1)\geq1 \iff k + h^\vee \geq \frac{h^\vee}{b}.
\end{align*}

In case $b$ is even,
\begin{align*}
\widehat{\Pi}(\lambda)=
\begin{cases}
\{b\delta-\theta, 2\alpha_1\}&\ \mathrm{if}\ n =1,\\
\{\frac{b}{2}\delta-\theta_s, \alpha_1, \ldots, \alpha_{n-1},2\alpha_n\} &\ \mathrm{if}\ n \geq 2,
\end{cases}
\end{align*}
where $\theta_s = \theta-\alpha_1 = \alpha_1 + 2(\alpha_2 + \cdots + \alpha_n)$ is the highest root in $\Delta_\mathrm{mid}$. Then, since
\begin{align*}
&\left(\frac{b}{2}\delta-\theta_s\right)^\vee = 2\,\widetilde{\nu}\left(\frac{b}{2}\delta-\theta_s\right)=bK-\theta_s^\vee,\\
&\theta_s^\vee = 2\,\widetilde{\nu}\left(\alpha_1 + 2(\alpha_2 + \cdots + \alpha_n)\right) = \alpha_1^\vee+2(\alpha_2^\vee + \cdots + \alpha_{n-1}^\vee)+\alpha_n^\vee,\\
&\rho(\theta_s^\vee) = 1 + 2(n-2) + 1 = 2(n-1),
\end{align*}
the condition (1) is equivalent to the following:\\
(i) if $n=1$,
\begin{align*}
&(\lambda+\widehat{\rho})\left((b\delta-\theta)^\vee\right)=(\lambda+\widehat{\rho})(bK-\theta^\vee)=b(k+h^\vee)-\frac{1}{2}\geq\frac{1}{2}\\
&\iff k+h^\vee \geq \frac{1}{b};
\end{align*}
(ii) if $n \geq 2$,
\begin{align*}
&(\lambda+\widehat{\rho})\left(\left(\frac{b}{2}\delta-\theta_s\right)^\vee\right)=(\lambda+\widehat{\rho})(bK-\theta_s^\vee)=b(k+h^\vee)-2(n-1)\geq1 \\
&\iff k+h^\vee \geq \frac{2n-1}{b}.
\end{align*}
\end{proof}
\end{proposition}

\begin{remark}
Proposition \ref{prop:osp-adm} for $n=1$ recovers the statement of \cite{KW88} for admissible levels of $\mathfrak{osp}(1|2)$.
\begin{proof}
Recall that $k$ is an admissible level if $\lambda=k\Lambda_0$ is admissible. \cite[Example 2]{KW88} says that $\displaystyle k=\frac{m}{2}=\frac{t}{2u}$ with coprime integers $t \in \Z$ and $u \in \Z_{\geq1}$ is an admissible level if and only if $3u+t-1\geq0$ if $t$ is odd and $3u+t-3\geq0$ if $t$ is even. Let $a \in \Z$ and $b \in \Z_{\geq1}$ be coprime integers such that $\displaystyle k=\frac{t}{2u}=\frac{a}{b}$. If $t$ is odd, since $(t, 2u)=1$, $a=t$ and $b=2u$. Hence $b$ is even. Then $3u+t-1\geq0$ is equivalent to
\begin{align*}
3b+2a-3\geq0 \iff \frac{a}{b}+\frac{3}{2}\geq\frac{3}{2b}.
\end{align*}
If $t$ is even, $u$ is odd, and since $(t/2, u)=1$, $a=t/2$ and $b=u$. Hence $b$ is odd. Then $3u+t-3\geq0$ is equivalent to
\begin{align*}
\frac{3}{2}b+a-1\geq0 \iff \frac{a}{b}+\frac{3}{2}\geq\frac{1}{b}.
\end{align*}
Therefore the statement in \cite{KW88} is the same as Proposition \ref{prop:osp-adm} for $n=1$.
\end{proof}
\end{remark}

\section{Formal Characters}

\subsection{Characters of Modules in $\mathcal{O}$}
Let $\widetilde{\mathfrak{g}}$ be a Kac-Moody Lie superalgebra in the sense of Serganova \cite{Serganova} and $\widetilde{\mathfrak{h}}$ be the Cartan subalgebra of $\widetilde{\mathfrak{g}}$. Then we can define the category $\mathcal{O}$ as certain full subcategory of $\widetilde{\mathfrak{g}}$-modules in the same way as in \cite{DGK}, and the characters
\begin{align*}
\operatorname{ch} M = \sum_{\lambda \in \widetilde{\mathfrak{h}}^*} (\dim M_\lambda) \mathrm{e}^\lambda,\quad
M \in \mathcal{O},
\end{align*}
where $M_\lambda$ is the weight space of $M$ for $\widetilde{\mathfrak{h}}$ of the weight $\lambda$. See e.g. \cite[Section 1]{GS} for the details.

\subsection{Characters of $V^k(\mathfrak{g})$-modules}\label{sec:affine-VA-mod}
Let $\mathfrak{g}$ be a finite-dimensional simple Lie superalgebra with the normalized even supersymmetric invariant bilinear form $( \cdot | \cdot )$ such that $( \theta | \theta ) = 2$ for the highest root $\theta$ of $\mathfrak{g}$, and $\widehat{\mathfrak{g}} = \mathfrak{g}[t, t^{-1}] \oplus \C K$ be the affine Lie superalgebra of $\mathfrak{g}$ with the central element $K$. For any finite-dimensional highest-weight $\mathfrak{g}$-module $E$ and $k \in \C$, we define the induced $\widehat{\mathfrak{g}}$-module $V^k_\mathfrak{g}(E)$ by
\begin{align*}
V^k_\mathfrak{g}(E) = U( \widehat{\mathfrak{g}} ) \underset{ U( \mathfrak{g}[t] \oplus \C K ) }{\otimes} E,
\end{align*}
where we consider $E$ as a $\mathfrak{g}[t] \oplus \C K$-module by $\mathfrak{g}[t]t = 0$ and $K = k$, and $U( \mathfrak{a} )$ denotes the universal enveloping algebra of $\mathfrak{a}$ for any Lie superalgebra $\mathfrak{a}$. $V^k_\mathfrak{g}(E)$ is called the local Weyl $\widehat{\mathfrak{g}}$-module induced from $E$ at level $k$. Set
\begin{align*}
V^k(\mathfrak{g}) = V^k_\mathfrak{g}(\C)
\end{align*}
for the trivial $\mathfrak{g}$-module $\C$. It is well-known that $V^k(\mathfrak{g})$ has a vertex superalgebra structure, and then $V^k_\mathfrak{g}(E)$ is a $V^k(\mathfrak{g})$-module. Let $h^\vee$ be the dual Coxeter number of $\mathfrak{g}$. If $k+h^\vee \neq 0$, the Sugawara construction defines the Virasoro field $L(z) = \sum_{m \in \Z} L_m z^{-m-2}$ on $V^k(\mathfrak{g})$ so that any $V^k(\mathfrak{g})$-module has a Virasoro module structure with the central charge $\frac{ k \operatorname{sdim}( \mathfrak{g} )}{k+h^\vee}$. From the point of view of vertex superalgebras, the formal character of $V^k_\mathfrak{g}(E)$ is defined as follows:
\begin{align*}
\operatorname{\widetilde{ch}} V^k_\mathfrak{g}(E) = q^{m_k^\mathfrak{g}(\mu)} \operatorname{ch} V^k_\mathfrak{g}(E),\quad
m_k^\mathfrak{g}(\mu) = \frac{ (\mu \mid \mu + 2 \rho) }{ 2 (k+h^\vee) },
\end{align*}
where $\mu$ is the highest weight of $E$, and $\rho$ is the Weyl vector of $\mathfrak{g}$. By definition, the specialization of $\operatorname{\widetilde{ch}} V^k_\mathfrak{g}(E)$ by $\mathrm{e}^\alpha \mapsto \mathrm{e}^{(\alpha \mid \lambda)}$ coincides with the trace function of $q^{L_0}\mathrm{e}^\lambda$ for $\lambda \in \mathfrak{h}^*$ on $V^k_\mathfrak{g}(E)$.

\subsection{Characters of $\mathcal{W}^k (\mathfrak{g})$-modules}\label{sec:W-alg-char}
Suppose that $\mathfrak{g}$ is a simple Lie algebra. Let $\mathfrak{n}_+$ (resp. $\mathfrak{n}_-$) be the sum of positive (resp. negative) root vector spaces of $\mathfrak{g}$, $f$ a principal nilpotent element of $\mathfrak{g}$ in $\mathfrak{n}_-$, and $H_{DS}^\bullet (?)$ the Drinfeld-Sokolov reduction cohomology functor associated to $f$, which is defined by
\begin{align*}
H_{DS}^\bullet (M) = H^{\frac{\infty}{2}+\bullet}(L\mathfrak{n}_+, M \otimes \C_\chi),\quad
M \in \mathcal{O},
\end{align*}
where $L\mathfrak{n}_+ = \mathfrak{n}_+[t, t^{-1}]$, $H^{\frac{\infty}{2}+\bullet}(L\mathfrak{n}_+, ?)$ is the semi-infinite cohomology of $L\mathfrak{n}_+$-modules, and $\C_\chi$ is the one-dimensional $L\mathfrak{n}_+$-module by $x t^m \mapsto \delta_{m, -1}(f|x)$. Then
\begin{align*}
\mathcal{W}^k (\mathfrak{g}) = H_{DS}^\bullet ( V^k(\mathfrak{g}) )
\end{align*}
has a vertex algebra structure, called the (principal) $\mathcal{W}$-algebra associated to $\mathfrak{g}$ at level $k$. We have $H_{DS}^i ( V^k(\mathfrak{g}) ) = 0$ unless $i = 0$. If $k+h^\vee \neq 0$, $\mathcal{W}^k (\mathfrak{g})$ also has a Virasoro field of the central charge $\operatorname{rank}(\mathfrak{g}) - 12\frac{| \rho - (k+h^\vee) \rho^\vee |^2}{k+h^\vee}$, where $\rho^\vee$ is the Weyl covector of $\mathfrak{g}$. For any $\mathcal{W}^k(\mathfrak{g})$-module $W$ whose $L_0$-eigenspaces are finite-dimensional, we define the formal character of $W$ by
\begin{align*}
\operatorname{\widetilde{ch}} W = \operatorname{tr}_W ( q^{L_0} ).
\end{align*}
Consider a $\mathcal{W}^k (\mathfrak{g})$-module $H_{DS}^\bullet ( M )$ for a highest-weight $V^k(\mathfrak{g})$-module $M$ with the highest weight $\mu$. Using the Euler-Poincar\'e principle, we have
\begin{align*}
&\operatorname{\widetilde{ch}} H_{DS}^\bullet ( M ) = \frac{q^{m_k^\mathfrak{g}(\mu)}}{\prod_{j=1}^\infty (1 - q^j)^{\operatorname{rank}(\mathfrak{g})}} \left.(\widehat{R}\operatorname{ch} M)\right|_{\mathrm{e}^\alpha \mapsto q^{-(\alpha \mid \rho^\vee)}},
\end{align*}
where $\widehat{R}$ is the Weyl denominator of $\widehat{\mathfrak{g}}$.

\section{Characters of modules of $\widehat{\mathfrak{osp}}_{1|2n}$ and $\widehat{\mathfrak{sp}}_{2n}$}

\subsection{Settings for $\widehat{\mathfrak{osp}}_{1|2n}$ and $\widehat{\mathfrak{sp}}_{2n}$}
Consider the cases that $\widehat{\mathfrak{g}} = \widehat{\mathfrak{osp}}_{1|2n} $ and $\widehat{\mathfrak{g}} = \widehat{\mathfrak{sp}}_{2n}$. Since $\mathfrak{sp}_{2n} \subset \mathfrak{osp}_{1|2n}$, we may identify the Cartan subalgebra of $\mathfrak{sp}_{2n}$ with that of $\mathfrak{osp}_{1|2n}$, which we denote by $\mathfrak{h}$. Let $\Pi^\mathfrak{osp} = \{\alpha_1,\ldots,\alpha_n\}$ be a set of simple roots of $\mathfrak{osp}_{1|2n}$ such that $\Pi^\mathfrak{sp} = \{\alpha_1,\ldots,\alpha_{n-1},2\alpha_n\}$ forms a set of simple roots of $\mathfrak{sp}_{2n}$. The Dynkin diagrams corresponding to $\Pi^\mathfrak{osp}, \Pi^\mathfrak{sp}$ are the following:
\begin{align*}
\quad\\
\setlength{\unitlength}{1mm}
\begin{picture}(0,0)(20,10)
\put(0,10){\circle{2}}
\put(-1,5){\footnotesize$\alpha_1$}
\put(1,10.3){\line(1,0){8}}
\put(10,10){\circle{2}}
\put(9,5){\footnotesize$\alpha_2$}
\put(11,10.3){\line(1,0){6}}
\put(18.5,9.4){$\cdot$}
\put(20,9.4){$\cdot$}
\put(21.5,9.4){$\cdot$}
\put(24.5,10.3){\line(1,0){6}}
\put(32,10){\circle{2}}
\put(31,5){\footnotesize$\alpha_{n-1}$}
\put(33,10.5){\line(1,0){8}}
\put(33,9.5){\line(1,0){8}}
\put(35.5,9){\Large$>$}
\put(42,10){\circle*{2}}
\put(41,5){\footnotesize$\alpha_{n}$}
\put(44.5,9){}
\end{picture}
\quad \quad\quad\quad\quad\quad\quad\quad\quad\quad\quad\quad       \text{and}    \quad\quad\quad\quad\quad\quad\quad\quad\quad \quad\quad 
\setlength{\unitlength}{1mm}
\begin{picture}(0,0)(20,10)
\put(0,10){\circle{2}}
\put(-1,5){\footnotesize$\alpha_1$}
\put(1,10.3){\line(1,0){8}}
\put(10,10){\circle{2}}
\put(9,5){\footnotesize$\alpha_2$}
\put(11,10.3){\line(1,0){6}}
\put(18.5,9.4){$\cdot$}
\put(20,9.4){$\cdot$}
\put(21.5,9.4){$\cdot$}
\put(24.5,10.3){\line(1,0){6}}
\put(32,10){\circle{2}}
\put(31,5){\footnotesize$\alpha_{n-1}$}
\put(33,10.5){\line(1,0){8}}
\put(33,9.5){\line(1,0){8}}
\put(35.5,9){\Large$<$}
\put(42,10){\circle{2}}
\put(41,5){\footnotesize$2\alpha_{n}$}
\put(44.5,9){.}
\end{picture}\\
\end{align*}
The normalized invariant form $( \cdot | \cdot )$ on $\mathfrak{osp}_{1|2n}$ satisfies that $( 2\alpha_{n} | 2\alpha_{n} ) = 2$. Then the invariant form restricted to $\mathfrak{sp}_{2n}$ is the same as the standard normalized form on $\mathfrak{sp}_{2n}$. Let $Q^\mathfrak{osp}$ (resp. $Q^\mathfrak{sp}$) be the root lattice of $\mathfrak{osp}_{1|2n}$ (resp. $\mathfrak{sp}_{2n}$), and $P^\mathfrak{osp}$ (resp. $P^\mathfrak{sp}$) be the weight lattice of $\mathfrak{osp}_{1|2n}$ (resp. $\mathfrak{sp}_{2n}$). It is easy to see that $Q^\mathfrak{osp} = P^\mathfrak{osp} = P^\mathfrak{sp}$, which we denote by $P$. Let $\Delta_+$, $\Delta_{+, \bar{0}}$, $\Delta_{+, \bar{1}}$ be the set of positive roots, even positive roots, odd positive roots of $\mathfrak{osp}_{1|2n}$ respectively. Set
\begin{align*}
\rho_\mathfrak{osp} = \rho_\mathfrak{sp} -\rho_{\bar{1}},\quad
\rho_\mathfrak{sp} = \frac{1}{2}\sum_{\alpha \in \Delta_{+, \bar{0}}}\alpha,\quad
\rho_{\bar{1}} = \frac{1}{2}\sum_{\alpha \in \Delta_{+, \bar{1}}}\alpha.
\end{align*}
Let $\rho^\vee_\mathfrak{sp}$ be the element in $\mathfrak{h}^*$ corresponding to the Weyl covector of $\mathfrak{sp}_{2n}$, that is, $(\rho^\vee_\mathfrak{sp}|\alpha) = 1$ for all $\alpha \in \Pi^\mathfrak{sp}$. Then we have
\begin{align*}
\rho_\mathfrak{osp} = \frac{1}{2} \rho^\vee_\mathfrak{sp}
\end{align*}
by direct computations.

\subsection{Verma modules}
For $k\in\C$ and $\lambda \in \mathfrak{h}^*$, let $M^k_\mathfrak{osp}(\lambda)$ (resp. $M^k_\mathfrak{sp}(\lambda)$) be the Verma module of $\widehat{\mathfrak{osp}}_{1|2n}$ (resp. $\widehat{\mathfrak{sp}}_{2n}$) with the highest weight $\lambda$ at level $k$. Then
\begin{align*}
&\operatorname{ch} M^k_\mathfrak{osp}(\lambda) = \frac{\mathrm{e}^\lambda}{\widehat{R}_\mathfrak{osp}},\quad
\operatorname{ch} M^k_\mathfrak{sp}(\lambda) = \frac{\mathrm{e}^\lambda}{\widehat{R}_\mathfrak{sp}},\\
&\widehat{R}_\mathfrak{osp}^{-1} = \widehat{R}_\mathfrak{sp}^{-1} \prod_{\alpha \in \Delta_{+, \bar{1}}}\prod_{j=1}^\infty (1 + \mathrm{e}^\alpha q^j) (1 + \mathrm{e}^{-\alpha} q^{j-1}),\\
&\widehat{R}_\mathfrak{sp} = \prod_{\alpha \in \Delta_{+, \bar{0}}}\prod_{j=1}^\infty (1 - q^j)^n (1 - \mathrm{e}^\alpha q^j) (1 - \mathrm{e}^{-\alpha} q^{j-1}),
\end{align*}
where $q = \mathrm{e}^{-\delta}$ as usual.
\begin{lemma}
\begin{align*}
\widehat{R}_\mathfrak{osp}^{-1} = \frac{\widehat{R}_\mathfrak{sp}^{-1}}{\prod_{j=1}^\infty (1 - q^j)^n} \sum_{\lambda \in P} \mathrm{e}^{\lambda} q^{(\lambda \mid \lambda + 2 \rho_{\bar{1}})}.
\end{align*}
\begin{proof}
Let $\lambda_i = \sum_{j = i}^n \alpha_j$. Then $\Delta_{+, \bar{1}} = \{\lambda_1,\ldots,\lambda_n\}$. We have $(\lambda_i | \lambda_j) = \frac{1}{2} \delta_{i, j}$, $2 \rho_{\bar{1}} = \sum_{i=1}^n \lambda_i$ and $P = \bigoplus_{i=1}^n \Z \lambda_i$. Using the Jacobi triple product identity
\begin{align*}
\prod_{j=1}^\infty (1 - q^j)^n (1 + u q^j) (1 + u^{-1} q^{j-1}) = \sum_{m \in \Z} u^m q^{\frac{1}{2} m(m+1)}
\end{align*}
for $u = \mathrm{e}^{\lambda_i}$, it follows that
\begin{multline*}
\widehat{R}_\mathfrak{sp} \prod_{j=1}^\infty (1 - q^j)^n \widehat{R}_\mathfrak{osp}^{-1}
= \prod_{i=1}^n \sum_{m_i \in \Z}\mathrm{e}^{m_i \lambda_i} q^{\frac{1}{2} m_i(m_i+1)}
= \sum_{m_1,\ldots,m_n \in \Z}\mathrm{e}^{\sum_{i=1}^n m_i \lambda_i} q^{\frac{1}{2} \sum_{i=1}^n m_i(m_i+1)}
= \sum_{\lambda \in P} \mathrm{e}^{\lambda} q^{(\lambda \mid \lambda + 2 \rho_{\bar{1}})}.
\end{multline*}
\end{proof}
\end{lemma}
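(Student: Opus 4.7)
The plan is to reduce the desired identity to $n$ independent applications of the Jacobi triple product. I would start from the explicit formula
\begin{equation*}
\widehat{R}_\mathfrak{osp}^{-1} = \widehat{R}_\mathfrak{sp}^{-1} \prod_{\alpha \in \Delta_{+,\bar{1}}} \prod_{j \geq 1} (1 + \mathrm{e}^\alpha q^j)(1 + \mathrm{e}^{-\alpha} q^{j-1})
\end{equation*}
given just before the lemma, and rewrite the target as the assertion that
\begin{equation*}
\widehat{R}_\mathfrak{sp} \prod_{j\geq 1}(1-q^j)^n \, \widehat{R}_\mathfrak{osp}^{-1} = \sum_{\lambda \in P} \mathrm{e}^{\lambda} q^{(\lambda\mid\lambda+2\rho_{\bar{1}})}.
\end{equation*}
The preliminary step is to describe $\Delta_{+,\bar{1}}$ explicitly: setting $\lambda_i := \alpha_i + \alpha_{i+1} + \cdots + \alpha_n$ for $i=1,\ldots,n$, one checks that $\Delta_{+,\bar{1}} = \{\lambda_1,\ldots,\lambda_n\}$, that under the normalization $(2\alpha_n \mid 2\alpha_n)=2$ one has $(\lambda_i \mid \lambda_j) = \tfrac{1}{2}\delta_{ij}$, that $2\rho_{\bar{1}} = \sum_i \lambda_i$, and that $P = \bigoplus_i \Z\lambda_i$ (the $\lambda_i$ are an orthogonal basis analogous to the $\epsilon_i$).

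With these identifications, I would distribute one factor of $\prod_{j\geq 1}(1-q^j)$ to each odd positive root, so that the left-hand side becomes
\begin{equation*}
\prod_{i=1}^n \prod_{j \geq 1} (1-q^j)(1+\mathrm{e}^{\lambda_i}q^j)(1+\mathrm{e}^{-\lambda_i}q^{j-1}),
\end{equation*}
and apply the Jacobi triple product identity to each factor, with $u = \mathrm{e}^{\lambda_i}$. This evaluates the $i$-th factor to $\sum_{m_i \in \Z} \mathrm{e}^{m_i \lambda_i} q^{m_i(m_i+1)/2}$, so the full product equals
\begin{equation*}
\sum_{(m_1,\ldots,m_n) \in \Z^n} \mathrm{e}^{\sum_i m_i \lambda_i}\, q^{\sum_i m_i(m_i+1)/2}.
\end{equation*}

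Finally, I would identify the index tuple $(m_1,\ldots,m_n)$ with $\lambda := \sum_i m_i \lambda_i \in P$ and use the bilinear-form data to compute the $q$-exponent intrinsically: from $(\lambda_i \mid \lambda_j) = \tfrac{1}{2}\delta_{ij}$ we get $(\lambda\mid\lambda) = \tfrac{1}{2}\sum_i m_i^2$, and from $2\rho_{\bar{1}} = \sum_j \lambda_j$ we get $(\lambda\mid 2\rho_{\bar{1}}) = \tfrac{1}{2}\sum_i m_i$, so $\sum_i m_i(m_i+1)/2 = (\lambda \mid \lambda+2\rho_{\bar{1}})$. This is precisely the right-hand side of the lemma. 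The computation is essentially bookkeeping; the only subtle point is pinning down the correct normalization on $\mathfrak{h}^*$ so that the short odd simple root $\alpha_n$ has $(\alpha_n\mid\alpha_n) = 1/2$, which is what makes the Jacobi triple product exponent $m(m+1)/2$ reassemble cleanly into $(\lambda \mid \lambda + 2\rho_{\bar{1}})$.
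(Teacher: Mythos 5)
Your proposal is correct and follows the same route as the paper's proof: identify the odd positive roots with $\lambda_i = \alpha_i + \cdots + \alpha_n$, check they form an orthogonal basis of $P$ with $(\lambda_i\mid\lambda_j)=\tfrac12\delta_{ij}$ and $2\rho_{\bar1}=\sum_i\lambda_i$, apply the Jacobi triple product once per $\lambda_i$, and reassemble the exponent as $(\lambda\mid\lambda+2\rho_{\bar1})$. The only difference is that you spell out the final bookkeeping step $\sum_i m_i(m_i+1)/2 = (\lambda\mid\lambda+2\rho_{\bar1})$ explicitly, which the paper leaves implicit.
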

\begin{corollary}\label{cor:Verma-osp-sp} For any $\mu \in P$,
\begin{align*}
\operatorname{ch} M^k_\mathfrak{osp}(\mu) = \sum_{\lambda \in P} \operatorname{ch} M^k_\mathfrak{sp}(\lambda) B^\lambda_\mu,\quad
B^\lambda_\mu = \frac{q^{(\lambda-\mu \mid \lambda-\mu+2 \rho_{\bar{1}})}}{\prod_{j=1}^\infty (1 - q^j)^n}.
\end{align*}
\end{corollary}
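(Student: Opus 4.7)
The plan is to derive the corollary directly from the lemma just established, using only the Verma-module character formulas and a reindexing of the lattice sum. Since $\operatorname{ch} M^k_\mathfrak{osp}(\mu) = \mathrm{e}^\mu \widehat{R}_\mathfrak{osp}^{-1}$ and $\operatorname{ch} M^k_\mathfrak{sp}(\lambda) = \mathrm{e}^\lambda \widehat{R}_\mathfrak{sp}^{-1}$, the statement is essentially a bookkeeping consequence of the lemma, which already expresses $\widehat{R}_\mathfrak{osp}^{-1}$ in terms of $\widehat{R}_\mathfrak{sp}^{-1}$ and a theta-like series over $P$.

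First I would multiply the identity of the lemma on both sides by $\mathrm{e}^\mu$, giving
\begin{equation*}
\operatorname{ch} M^k_\mathfrak{osp}(\mu) = \mathrm{e}^\mu \widehat{R}_\mathfrak{osp}^{-1} = \frac{\widehat{R}_\mathfrak{sp}^{-1}}{\prod_{j=1}^\infty (1-q^j)^n}\sum_{\lambda \in P} \mathrm{e}^{\lambda + \mu}\, q^{(\lambda \mid \lambda + 2\rho_{\bar 1})}.
\end{equation*}
Next I would reindex the sum by the bijection $\lambda \mapsto \lambda' := \lambda + \mu$ of the lattice $P$ (valid because $\mu \in P$), which turns the exponent of $\mathrm{e}$ into $\lambda'$ and the exponent of $q$ into $(\lambda' - \mu \mid \lambda' - \mu + 2\rho_{\bar 1})$. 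Pulling $\widehat{R}_\mathfrak{sp}^{-1}\mathrm{e}^{\lambda'}$ into each summand as $\operatorname{ch} M^k_\mathfrak{sp}(\lambda')$ yields exactly
\begin{equation*}
\operatorname{ch} M^k_\mathfrak{osp}(\mu) = \sum_{\lambda' \in P} \operatorname{ch} M^k_\mathfrak{sp}(\lambda')\, \frac{q^{(\lambda'-\mu \mid \lambda'-\mu + 2\rho_{\bar 1})}}{\prod_{j=1}^\infty (1-q^j)^n},
\end{equation*}
which is the claimed identity after renaming $\lambda'$ back to $\lambda$.

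There is no real obstacle here; the only thing to be careful about is that the formal sums over $P$ make sense and can be reindexed term by term. This is fine because for each fixed weight $\mathrm{e}^\nu$ only finitely many $\lambda \in P$ contribute to the coefficient of $\mathrm{e}^\nu q^N$ for any fixed power $N$ of $q$ (the exponent $(\lambda \mid \lambda + 2\rho_{\bar 1})$ is a positive-definite quadratic form on $P$), so the shift $\lambda \mapsto \lambda + \mu$ is a well-defined bijection of the support and the rearrangement is legitimate in $\C[P][[q]]$.
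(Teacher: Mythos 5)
Your proof is correct and is precisely the intended derivation: the paper presents this as an immediate corollary of the preceding lemma, obtained by multiplying by $\mathrm{e}^\mu$ and reindexing $\lambda \mapsto \lambda + \mu$ over the lattice $P$. The remark on well-definedness of the reindexing in $\C[P][[q]]$ is a sensible addition but not something the paper dwells on.
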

Let $W$ be the Weyl group of $\mathfrak{sp}_{2n}$ and $|\lambda|^2 = (\lambda | \lambda)$ for $\lambda \in \mathfrak{h}^*$. Since $|\rho_{\bar{1}}|^2 = \frac{n}{8}$, we have
\begin{align*}
B^\lambda_\mu = \frac{q^{|\lambda-\mu+\rho_{\bar{1}}|^2-\frac{n}{8}}}{\prod_{j=1}^\infty (1 - q^j)^n}.
\end{align*}
Thus,
\begin{align*}
B^\lambda_\mu = B^{\lambda+\nu}_{\mu+\nu},\quad
\nu \in \mathfrak{h}^*;\quad
B^\lambda_{w(\mu)} = B^{w^{-1}(\lambda + \rho_{\bar{1}}) - \rho_{\bar{1}}}_\mu,\quad
w \in W.
\end{align*}

\subsection{Local Weyl modules}
Let $P_+$ be the set of dominant weights in $P$. For $k \in \C$ and $\mu \in P_+$, let $E_\mu^\mathfrak{osp}$ (resp. $E_\mu^\mathfrak{sp}$) be the finite-dimensional simple $\mathfrak{osp}_{1|2n}$-module (resp. $\mathfrak{sp}_{2n}$-module) with the highest weight $\mu$, and $V^k_\mathfrak{osp}(\mu)$ (resp. $V^k_\mathfrak{sp}(\mu)$) be the local Weyl module of $\widehat{\mathfrak{osp}}_{1|2n}$ (resp. $\widehat{\mathfrak{sp}}_{2n}$) induced from $E_\mu$ at level $k$. See Section \ref{sec:affine-VA-mod} for the definitions of local Weyl modules. We have
\begin{align*}
V^k_\mathfrak{osp}(\mu) \simeq U( \mathfrak{osp}_{1|2n}[t^{-1}]t^{-1} ) \otimes E_\mu^\mathfrak{osp},\quad
V^k_\mathfrak{sp}(\mu) \simeq U( \mathfrak{sp}_{2n}[t^{-1}]t^{-1} ) \otimes E_\mu^\mathfrak{sp}
\end{align*}
as vector spaces. Thus,
\begin{align*}
&\operatorname{ch} V^k_\mathfrak{osp}(\mu) = \sum_{w \in W} (-1)^{l(w)} \operatorname{ch} M^k_\mathfrak{osp}( w(\mu+\rho_\mathfrak{osp}) - \rho_\mathfrak{osp} ),\\
&\operatorname{ch} V^k_\mathfrak{sp}(\mu) = \sum_{w \in W} (-1)^{l(w)} \operatorname{ch} M^k_\mathfrak{sp}( w(\mu+\rho_\mathfrak{sp}) - \rho_\mathfrak{sp} ),
\end{align*}
where $l(w)$ is the length of $w$ in $W$. Using the equalities
\begin{align*}
B^\lambda_{ w( \mu + \rho_\mathfrak{osp} ) - \rho_\mathfrak{osp} }
= B^{ \lambda + \rho_\mathfrak{osp} }_{ w( \mu + \rho_\mathfrak{osp} ) }
= B^{ w^{-1}(\lambda + \rho_\mathfrak{sp}) - \rho_{\bar{1}} }_{ \mu + \rho_\mathfrak{osp} }
= B^{ w^{-1}(\lambda + \rho_\mathfrak{sp}) - \rho_\mathfrak{sp} }_\mu,
\end{align*}
and Corollary \ref{cor:Verma-osp-sp}, it follows that
\begin{align}\label{eq:ch-osp-sp}
\operatorname{ch} V^k_\mathfrak{osp}(\mu) 
= \sum_{\lambda \in P} \operatorname{ch} M^k_\mathfrak{sp}(\lambda) \sum_{w \in W} (-1)^{l(w)} B^{ w(\lambda + \rho_\mathfrak{sp}) - \rho_\mathfrak{sp} }_\mu.
\end{align}
Define a real form $\mathfrak{h}^*_\R$ of $\mathfrak{h}^*$ and a Weyl chamber $C$ in $\mathfrak{h}^*_\R$ by
\begin{align*}
\mathfrak{h}^*_\R = P \otimes_\Z \R,\quad
C = \{ \lambda \in \mathfrak{h}^*_\R \mid (\lambda | \alpha^\vee ) >0\ \mathrm{for}\ \mathrm{all}\ \alpha \in \Delta_+\},
\end{align*}
where $\alpha^\vee = 2 \alpha/(\alpha|\alpha) \in \mathfrak{h}^*$. Then
\begin{align*}
W(C) = \bigcup_{w \in W} w(C) = \{ \lambda \in \mathfrak{h}^*_\R \mid (\lambda | \alpha ) \neq 0\ \mathrm{for}\ \mathrm{all}\ \alpha \in \Delta\},\quad
W(\overline{C}) = \bigcup_{w \in W} w(\overline{C}) = \mathfrak{h}^*_\R.
\end{align*}
An element $\lambda \in \mathfrak{h}^*_\R$ is called regular if $\lambda \in W(C)$, and is called singular unless $\lambda$ is regular.
\begin{lemma}\label{lemma:P-reg-sing}
Let $P^\mathrm{reg}$ be the set of regular elements in $P$, $P^\mathrm{sing}$ the set of singular elements in $P$, $P_+^\mathrm{reg} = P_+ \cap P^\mathrm{reg}$, and $P_+^\mathrm{sing} = P_+ \cap P^\mathrm{sing}$. Then
\begin{enumerate}
\item $P = P^\mathrm{reg} \sqcup P^\mathrm{sing}$ and $P_+ = P_+^\mathrm{reg} \sqcup P_+^\mathrm{sing}$.
\item $P_+^\mathrm{reg} = P_+ + \rho_\mathfrak{sp}$.
\item $P^\mathrm{reg} = W(P_+^\mathrm{reg}) \simeq W \times P_+^\mathrm{reg}$.
\item $P^\mathrm{sing} = W(P_+^\mathrm{sing})$.
\end{enumerate}
\begin{proof}
(1) is trivial. First, we consider (2). Since $P_+ = P \cap \overline{C}$, (2) follows from the fact that $P_+$ coincides with the set of integral dominant weights of $\mathfrak{sp}_{2n}$. Next, we consider (3) and (4). Since $\mathfrak{h}^*_\R = W(\overline{C})$ and $P$ is $W$-invariant, we have $P = W(P \cap \overline{C}) = W(P_+)$. Hence $P^\mathrm{reg} = W(P_+^\mathrm{reg})$ and $P^\mathrm{sing} = W(P_+^\mathrm{sing})$. This proves (4) and the first equality of (3). Finally, we show the last isomorphism of (3). Using the fact that $W$ acts transitively on the set of Weyl chambers of $\mathfrak{sp}_{2n}$, it follows that the map $W \times P_+^\mathrm{reg} \ni (w,\lambda) \mapsto w(\lambda) \in P^\mathrm{reg}$ gives an isomorphism. Therefore (3) follows.
\end{proof}
\end{lemma}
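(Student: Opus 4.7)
The plan is to verify the four claims in order, using only standard facts about the Weyl-group action on $\mathfrak{h}^*_\R$ together with the explicit description $C = \{\lambda \mid (\lambda|\alpha^\vee)>0,\ \alpha\in\Delta_+\}$ and $P_+ = P \cap \overline{C}$.

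Part (1) is immediate: by definition an element of $\mathfrak{h}^*_\R$ is regular iff it lies in $W(C)$ and singular iff $(\lambda|\alpha)=0$ for some root $\alpha$, and these conditions are mutually exclusive and exhaustive. Intersecting with $P$ (respectively with $P_+$) gives the first claim.

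For part (2), I would observe that $\rho_\mathfrak{sp}$ is a sum of fundamental weights of $\mathfrak{sp}_{2n}$ and hence lies in $P$, so $P_+ + \rho_\mathfrak{sp} \subset P$. For any $\mu\in P_+$ and any simple coroot $\alpha^\vee$ of $\mathfrak{sp}_{2n}$ one has $(\mu+\rho_\mathfrak{sp}\mid\alpha^\vee) = (\mu\mid\alpha^\vee)+1 \geq 1 > 0$, so $\mu+\rho_\mathfrak{sp}\in C$, proving one inclusion. Conversely, if $\lambda\in P_+^\mathrm{reg}$ then $(\lambda\mid\alpha^\vee)$ is a positive integer for every simple coroot, so $(\lambda-\rho_\mathfrak{sp}\mid\alpha^\vee)\geq 0$, whence $\lambda-\rho_\mathfrak{sp}\in P_+$ and $\lambda\in P_+ + \rho_\mathfrak{sp}$.

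For (3) and (4), the key input is that $\overline{C}$ is a fundamental domain for the $W$-action on $\mathfrak{h}^*_\R$ and that the stabilizer of $\lambda\in\overline{C}$ is the subgroup generated by the simple reflections $s_\alpha$ with $(\lambda\mid\alpha)=0$; this stabilizer is trivial precisely when $\lambda\in C$. Since $P$ is $W$-invariant and $\mathfrak{h}^*_\R = W(\overline{C})$, we get $P = W(P_+)$. Restricting to the regular locus yields $P^\mathrm{reg} = W(P_+\cap C) = W(P_+^\mathrm{reg})$, and the triviality of stabilizers of regular elements promotes this surjection to the bijection $W\times P_+^\mathrm{reg}\to P^\mathrm{reg}$; restricting to the singular locus yields $P^\mathrm{sing} = W(P_+^\mathrm{sing})$, giving (4).

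The proof is essentially bookkeeping around the standard fundamental-domain theorem for finite Coxeter groups, so I do not anticipate a real obstacle. The only point worth stating carefully is that $\rho_\mathfrak{sp}$ actually lies in $P$ (so that the translation in (2) stays inside the weight lattice); this is true because the weight lattice $P = P^\mathfrak{sp}$ contains all fundamental weights of $\mathfrak{sp}_{2n}$.
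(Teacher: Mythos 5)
Your proof is correct and follows the same approach as the paper: part (1) by definition, part (2) from the standard description of $P_+$ as the set of dominant integral weights, and parts (3)--(4) from $\mathfrak{h}^*_\R = W(\overline{C})$ together with $W$-invariance of $P$ and the fact that stabilizers of regular points are trivial. You simply spell out the inclusions in (2) and the fundamental-domain/stabilizer facts in (3) more explicitly than the paper does.
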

Set
\begin{align*}
w \circ \lambda = w(\lambda + \rho_\mathfrak{sp}) - \rho_\mathfrak{sp},\quad
w \in W,\quad
\lambda \in \mathfrak{h}^*.
\end{align*} 
\begin{proposition}\label{prop:ch-osp-sp} Suppose that $k \notin \Q$. For $\mu \in P_+$, we have
\begin{align*}
\operatorname{ch} V^k_\mathfrak{osp}(\mu) = \sum_{\lambda \in P_+} \operatorname{ch} V^k_\mathfrak{sp}(\lambda) \sum_{w \in W} (-1)^{l(w)} B^{ w \circ \lambda }_\mu.
\end{align*}
\begin{proof}
By \eqref{eq:ch-osp-sp} and Lemma \ref{lemma:P-reg-sing}, we have
\begin{equation}\label{eq:proof-of-ch-prop-1} 
\begin{split}
&\operatorname{ch} V^k_\mathfrak{osp}(\mu)
= \sum_{\lambda \in P_+} \sum_{w_1 \in W} \operatorname{ch} M^k_\mathfrak{sp}(w_1 \circ \lambda) \sum_{w_2 \in W} (-1)^{l(w_2)} B^{ w_2 \circ w_1 \circ \lambda}_\mu\\
&\quad\quad \quad + \sum_{\lambda \in P_+^\mathrm{sing} - \rho_\mathfrak{sp}} \sum_{w_1 \in S_\lambda} \operatorname{ch} M^k_\mathfrak{sp}(w_1 \circ \lambda) \sum_{w_2 \in W} (-1)^{l(w_2)} B^{ w_2 \circ w_1 \circ \lambda}_\mu\\
&= \sum_{\lambda \in P_+} \operatorname{ch} V^k_\mathfrak{sp}(\lambda) \sum_{w \in W} (-1)^{l(w)} B^{ w \circ \lambda}_\mu
+ \sum_{\lambda \in P_+^\mathrm{sing} - \rho_\mathfrak{sp}} \sum_{w_1 \in S_\lambda} (-1)^{l(w_1)} \operatorname{ch} M^k_\mathfrak{sp}(w_1 \circ \lambda) \sum_{w_2 \in W} (-1)^{l(w_2)} B^{ w_2 \circ \lambda}_\mu,
\end{split}
\end{equation}
where $S_\lambda$ is a set of representatives of $W / \Ker\left( W \ni w \mapsto w(\lambda + \rho_\mathfrak{sp}) \in W(\lambda + \rho_\mathfrak{sp}) \right)$ in $W$. Since $k \notin \Q$, the Kazhdan-Lusztig category is semisimple and every simple object is isomorphic to $V^k_\mathfrak{sp}(\lambda)$ for certain $\lambda \in P_+$ so that
\begin{align*}
V^k_\mathfrak{osp}(\mu) \simeq \bigoplus_{\lambda \in P_+} V^k_\mathfrak{sp}(\lambda) \otimes \Hom_{\widehat{\mathfrak{sp}}_{2n}}\left( V^k_\mathfrak{sp}(\lambda), V^k_\mathfrak{osp}(\mu) \right).
\end{align*}
Then each coefficient of $\mathrm{e}^{w \circ \lambda}$ for $w \in W$ and $\lambda \in P_+^\mathrm{sing} - \rho_\mathfrak{sp}$ in $\operatorname{ch} V^k_\mathfrak{osp}(\mu)$ must be zero. This implies that the second term in the last equation in \eqref{eq:proof-of-ch-prop-1} is equal to zero. Therefore we complete the proof.
\end{proof}
\end{proposition}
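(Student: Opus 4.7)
The plan is to start from the identity \eqref{eq:ch-osp-sp}, which already expresses $\operatorname{ch} V^k_\mathfrak{osp}(\mu)$ as a sum over all $\lambda \in P$ of $\operatorname{ch} M^k_\mathfrak{sp}(\lambda)$ weighted by $\sum_{w \in W}(-1)^{l(w)} B^{w \circ \lambda}_\mu$, and to convert the Verma characters into local Weyl characters. Since the Weyl-Kac formula writes $\operatorname{ch} V^k_\mathfrak{sp}(\lambda_0) = \sum_{w \in W}(-1)^{l(w)} \operatorname{ch} M^k_\mathfrak{sp}(w \circ \lambda_0)$ for $\lambda_0 \in P_+$, the idea is to reindex the outer sum so that the inner Weyl alternating sum over $M^k_\mathfrak{sp}$ can be recognized.

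The reindexing would proceed through Lemma \ref{lemma:P-reg-sing}. I would split $P = P^{\mathrm{reg}} \sqcup P^{\mathrm{sing}}$, write the regular part as the free orbit $\{w \circ \lambda_0 : w \in W,\ \lambda_0 \in P_+\}$, and the singular part via coset representatives $S_{\lambda_0}$ of $W / \Ker(W \ni w \mapsto w(\lambda_0 + \rho_\mathfrak{sp}))$ for $\lambda_0 \in P_+^{\mathrm{sing}} - \rho_\mathfrak{sp}$. Applying the $B$-matrix identities stated before Section B.3 ($B^\lambda_\mu = B^{\lambda+\nu}_{\mu+\nu}$ and $B^\lambda_{w(\mu)} = B^{w^{-1}(\lambda + \rho_{\bar 1}) - \rho_{\bar 1}}_\mu$) to shift the Weyl-group action between the two indices of $B$, I can rewrite $B^{w_2 \circ (w_1 \circ \lambda_0)}_\mu$ in the form $B^{(w_2 w_1) \circ \lambda_0}_\mu$, pull the $w_1$-sum across, and recognize the Weyl-Kac identity. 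This gives the claimed expression
\[
\sum_{\lambda_0 \in P_+} \operatorname{ch} V^k_\mathfrak{sp}(\lambda_0) \sum_{w \in W}(-1)^{l(w)} B^{w \circ \lambda_0}_\mu
\]
plus a leftover singular correction indexed by $\lambda_0 \in P_+^{\mathrm{sing}} - \rho_\mathfrak{sp}$ and $w_1 \in S_{\lambda_0}$, of the shape $\sum (-1)^{l(w_1)} \operatorname{ch} M^k_\mathfrak{sp}(w_1 \circ \lambda_0) \sum_{w_2 \in W}(-1)^{l(w_2)} B^{w_2 \circ \lambda_0}_\mu$.

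The decisive step, where the hypothesis $k \notin \Q$ enters, is to show that this singular contribution vanishes. At irrational $k$ the Kazhdan-Lusztig category $\KL^k(\mathfrak{sp}_{2n})$ is semisimple with simple objects exactly $\{V^k_\mathfrak{sp}(\lambda_0)\}_{\lambda_0 \in P_+}$. Restricting $V^k_\mathfrak{osp}(\mu)$ along $\widehat{\mathfrak{sp}}_{2n} \subset \widehat{\mathfrak{osp}}_{1|2n}$ yields an object of this category, so
\[
V^k_\mathfrak{osp}(\mu) \cong \bigoplus_{\lambda_0 \in P_+} V^k_\mathfrak{sp}(\lambda_0) \otimes \Hom_{\widehat{\mathfrak{sp}}_{2n}}\!\left(V^k_\mathfrak{sp}(\lambda_0), V^k_\mathfrak{osp}(\mu)\right).
\]
Consequently every weight of $V^k_\mathfrak{osp}(\mu)$ is a weight of some $V^k_\mathfrak{sp}(\lambda_0)$ with $\lambda_0 \in P_+$, and in particular the $\mathrm{e}^{w \circ \lambda_0}$-coefficient for any $\lambda_0 \in P_+^{\mathrm{sing}} - \rho_\mathfrak{sp}$ must be zero. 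Matching this vanishing against the explicit singular-orbit term in the rewritten right-hand side forces that term to vanish, which completes the proof.

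The main obstacle is bookkeeping: choosing coset representatives cleanly for singular $\lambda_0 + \rho_\mathfrak{sp}$, verifying the sign and index tracking when the $B$-matrix identities are applied, and ensuring that the regular contribution is indexed exactly once. Everything else is a routine application of the Weyl denominator formula together with the semisimplicity of $\KL^k(\mathfrak{sp}_{2n})$ at generic level.
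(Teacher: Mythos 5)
Your proposal matches the paper's proof: start from \eqref{eq:ch-osp-sp}, split the sum over $P$ into regular and singular parts via Lemma \ref{lemma:P-reg-sing}, reindex the regular part to assemble the local Weyl characters of $\widehat{\mathfrak{sp}}_{2n}$, and use semisimplicity of the Kazhdan--Lusztig category at irrational $k$ to force the singular contribution to vanish. One small point of hygiene: the identity $B^{w_2 \circ (w_1 \circ \lambda)}_\mu = B^{(w_2 w_1)\circ\lambda}_\mu$ is immediate because $\circ$ is a group action and does not use the $B$-matrix identities (those were already spent in deriving \eqref{eq:ch-osp-sp}), while the factor $(-1)^{l(w_1)}$ that appears when you re-extract the Weyl--Kac sum comes from $(-1)^{l(\cdot)}$ being a homomorphism $W \to \{\pm 1\}$, not from any $B$-matrix manipulation.
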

\begin{corollary} For $\lambda \in P_+^\mathrm{sing} - \rho_\mathfrak{sp}$ and $\mu \in P_+$,
\begin{align*}
\sum_{w \in W} (-1)^{l(w)} B^{ w \circ \lambda }_\mu = 0.
\end{align*}
\end{corollary}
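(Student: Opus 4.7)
The plan is to extract the claim directly from the derivation inside the proof of Proposition \ref{prop:ch-osp-sp}, with one extra linear-independence argument. Recall that the second line of \eqref{eq:proof-of-ch-prop-1} exhibits $\operatorname{ch} V^k_\mathfrak{osp}(\mu)$ as the sum of a ``regular part'' (involving $\operatorname{ch} V^k_\mathfrak{sp}(\lambda)$ for $\lambda\in P_+$) plus a ``singular part''
\[
T \;:=\; \sum_{\lambda \in P_+^\mathrm{sing} - \rho_\mathfrak{sp}} A_\lambda \cdot C_\lambda, \qquad
A_\lambda := \sum_{w_1 \in S_\lambda} (-1)^{l(w_1)} \operatorname{ch} M^k_\mathfrak{sp}(w_1 \circ \lambda), \qquad
C_\lambda := \sum_{w_2 \in W} (-1)^{l(w_2)} B^{w_2 \circ \lambda}_\mu.
\]
Proposition \ref{prop:ch-osp-sp} is exactly the statement that the regular part is already all of $\operatorname{ch} V^k_\mathfrak{osp}(\mu)$, so $T=0$. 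It remains to deduce $C_\lambda=0$ for every $\lambda\in P_+^\mathrm{sing}-\rho_\mathfrak{sp}$.

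The key step is the linear independence of the family $\{A_\lambda\}_{\lambda\in P_+^\mathrm{sing}-\rho_\mathfrak{sp}}$. First I would recall that Verma characters $\operatorname{ch} M^k_\mathfrak{sp}(\nu) = \mathrm{e}^\nu/\widehat{R}_\mathfrak{sp}$ are linearly independent in the sense that each carries a unique leading term $\mathrm{e}^\nu$, so $\{\operatorname{ch} M^k_\mathfrak{sp}(\nu)\}_\nu$ is linearly independent as $\nu$ varies. Next, for distinct $\lambda_1,\lambda_2 \in P_+^\mathrm{sing}-\rho_\mathfrak{sp}$ the weights $\lambda_i+\rho_\mathfrak{sp}$ are dominant (they lie in $P_+^\mathrm{sing}\subset P_+$), hence belong to disjoint $W$-orbits; applying $-\rho_\mathfrak{sp}$ shows that the dot orbits $W\circ\lambda_1$ and $W\circ\lambda_2$ are disjoint, so the supports (as formal sums of Verma characters indexed by highest weight) of $A_{\lambda_1}$ and $A_{\lambda_2}$ are disjoint. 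Finally, the definition of $S_\lambda$ as a set of coset representatives for $W$ modulo the stabilizer of $\lambda+\rho_\mathfrak{sp}$ ensures that the weights $\{w_1\circ\lambda : w_1\in S_\lambda\}$ are pairwise distinct, so $A_\lambda$ is a nonzero signed combination of distinct Verma characters. Combined with the disjoint-supports observation, the family $\{A_\lambda\}$ is linearly independent.

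Given this, the identity $T=0$ forces $C_\lambda=0$ for each $\lambda\in P_+^\mathrm{sing}-\rho_\mathfrak{sp}$, which is precisely the assertion of the corollary.

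There is no serious obstacle in this argument since the heavy lifting has already been done in Proposition \ref{prop:ch-osp-sp}. The only care required is in verifying that the singular dot-orbits are pairwise disjoint (which reduces to the elementary fact that two distinct dominant weights of $\mathfrak{sp}_{2n}$ lie in distinct $W$-orbits) and that the partial alternating sum $A_\lambda$ does not accidentally vanish (guaranteed by the coset-representative definition of $S_\lambda$). Once these two points are established, the corollary is immediate.
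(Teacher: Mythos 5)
Your proposal is correct and takes essentially the same approach as the paper. The paper states the corollary with no separate proof, treating it as an immediate byproduct of the argument inside the proof of Proposition~\ref{prop:ch-osp-sp} (``each coefficient of $\mathrm{e}^{w\circ\lambda}$ for $\lambda\in P_+^{\mathrm{sing}}-\rho_\mathfrak{sp}$ must be zero''); you spell out the implicit linear-independence step --- that the singular dot-orbits are pairwise disjoint and disjoint from all regular ones, and that each $A_\lambda$ is a nonzero signed sum of distinct Verma characters --- which is exactly the content the paper is invoking.
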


\section{Main Results}

\subsection{Branching functions}
Let $h^\vee_\mathfrak{osp}$, $h^\vee_\mathfrak{sp}$ be the dual Coxeter numbers of $\mathfrak{osp}_{1|2n}$, $\mathfrak{sp}_{2n}$ respectively. Then $h^\vee_\mathfrak{osp} = n + \frac{1}{2}$, $h^\vee_\mathfrak{sp} = n + 1$. From now on, we assume that $k \notin \Q$. Especially, $k+h^\vee_\mathfrak{osp} \neq 0$ and $k+h^\vee_\mathfrak{sp} \neq 0$. Set
\begin{align*}
m_k^\mathfrak{osp}(\mu) = \frac{(\mu \mid \mu + 2 \rho_\mathfrak{osp})}{2 (k+h^\vee_\mathfrak{osp})},\quad
m_k^\mathfrak{sp}(\mu) = \frac{(\mu \mid \mu + 2 \rho_\mathfrak{sp})}{2 (k+h^\vee_\mathfrak{sp})},\quad
\mu \in P_+.
\end{align*}
Now, it follows from Proposition \ref{prop:ch-osp-sp} (see Corollary \ref{cor:Verma-osp-sp} for the definitions of $B^\lambda_\mu$) that
\begin{align}
\label{eq:branching-1} &\operatorname{\widetilde{ch}} V^k_\mathfrak{osp} (\mu) = \sum_{\lambda \in P_+} \operatorname{\widetilde{ch}} V^k_\mathfrak{sp} (\lambda) \sum_{w \in W} (-1)^{l(w)} \widetilde{B}^{ w \circ \lambda }_\mu,\quad
\widetilde{B}^{ w \circ \lambda }_\mu = \frac{ q^{ \Delta_{w \circ \lambda, \mu} } }{\prod_{j=1}^\infty (1 - q^j)^n},\\
\label{eq:branching-2} &\Delta_{w \circ \lambda, \mu} = m_k^\mathfrak{osp}(\mu) - m_k^\mathfrak{sp}(\lambda) +(w \circ \lambda - \mu | w \circ \lambda - \mu + 2 \rho_{\bar{1}}).
\end{align}
We call the coefficients of $\operatorname{\widetilde{ch}} V^k_\mathfrak{sp} (\lambda)$ in $\operatorname{\widetilde{ch}} V^k_\mathfrak{osp} (\mu)$ the branching functions. Define $\ell \in \C$ by
\begin{align}\label{eq:ell}
\frac{1}{ k+h^\vee_\mathfrak{sp} } + \frac{1}{ \ell+h^\vee_\mathfrak{sp} } = 2.
\end{align}
\begin{lemma}\label{lemma:delta-lambda-mu}
\begin{align*}
\Delta_{w \circ \lambda, \mu} = m_\ell^\mathfrak{sp}\left( w \circ \lambda - 2(\ell+h^\vee_\mathfrak{sp}) \mu \right) - \left( w \circ \lambda - 2(\ell+h^\vee_\mathfrak{sp}) \mu \mid \rho^\vee_\mathfrak{sp} \right).
\end{align*}
\begin{proof}
Direct computations.
\end{proof}
\end{lemma}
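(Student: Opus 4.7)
The proof is purely a direct calculation, but it is worth organizing carefully so that the arithmetic actually collapses. The plan is as follows.

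First, I would set the abbreviations $K := k + h^\vee_\mathfrak{sp}$ and $L := \ell + h^\vee_\mathfrak{sp}$. Since $h^\vee_\mathfrak{osp} = h^\vee_\mathfrak{sp} - \tfrac12$, one has $2(k+h^\vee_\mathfrak{osp}) = 2K-1$, while the level relation \eqref{eq:ell} gives $\tfrac1K + \tfrac1L = 2$, equivalently $2L = \tfrac{2K}{2K-1}$, hence the crucial identity
\[
\frac{1}{2L} = \frac{2K-1}{2K}, \qquad \frac{1}{2K-1} + 1 - 2L = 0.
\]
Next, I would replace the occurrence of $\lambda$ in $m^\mathfrak{sp}_k(\lambda)$ by $\xi := w\circ\lambda$: because $\xi + \rho_\mathfrak{sp} = w(\lambda+\rho_\mathfrak{sp})$ and $W$ preserves $(\,\cdot\,|\,\cdot\,)$, we have $(\lambda|\lambda+2\rho_\mathfrak{sp}) = (\xi|\xi+2\rho_\mathfrak{sp})$, so $m^\mathfrak{sp}_k(\lambda) = \tfrac{1}{2K}(\xi|\xi+2\rho_\mathfrak{sp})$. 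This lets the entire identity be rewritten in terms of $\xi$ and $\mu$ alone.

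Then I would substitute $\rho_\mathfrak{osp} = \tfrac12\rho^\vee_\mathfrak{sp}$ and $\rho_{\bar 1} = \rho_\mathfrak{sp} - \tfrac12\rho^\vee_\mathfrak{sp}$ throughout. Expanding
\[
(\xi-\mu\mid \xi-\mu + 2\rho_{\bar 1}) = (\xi|\xi+2\rho_\mathfrak{sp}) - 2(\xi|\rho_\mathfrak{sp}) - 2(\xi|\mu) + (\mu|\mu) + 2(\xi|\rho_{\bar 1}) - 2(\mu|\rho_{\bar 1}),
\]
and combining the $(\xi|\xi+2\rho_\mathfrak{sp})$ coefficient with $-\tfrac{1}{2K}(\xi|\xi+2\rho_\mathfrak{sp})$ produces the factor $\tfrac{2K-1}{2K} = \tfrac{1}{2L}$, which matches the corresponding term in $m^\mathfrak{sp}_\ell(\xi-2L\mu)$. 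The cross terms $-2(\xi|\mu)$, $-2(\mu|\rho_\mathfrak{sp})$, and $-(\xi|\rho^\vee_\mathfrak{sp})$ appear identically on both sides after using $2(\rho_\mathfrak{sp} - \rho_{\bar 1}) = \rho^\vee_\mathfrak{sp}$.

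What remains on the left after this matching is the scalar contribution
\[
\frac{(\mu|\mu) + (\mu|\rho^\vee_\mathfrak{sp})}{2K-1} + (\mu|\mu) + (\mu|\rho^\vee_\mathfrak{sp}),
\]
while the right side contributes $2L\bigl((\mu|\mu) + (\mu|\rho^\vee_\mathfrak{sp})\bigr)$. The final step is to invoke the identity $\tfrac{1}{2K-1} + 1 - 2L = 0$, which kills the residual. The only point requiring a little care is to avoid sign errors when handling the difference $\rho_\mathfrak{sp} - \rho_{\bar 1}$; the rest is bookkeeping that is guaranteed to close by the duality relation \eqref{eq:ell}.
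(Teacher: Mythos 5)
Your computation is correct, and it is precisely the "direct computation" that the paper leaves unstated. You set up the right abbreviations ($K$, $L$, $\xi = w\circ\lambda$), use the $W$-invariance of the form to rewrite $m_k^{\mathfrak{sp}}(\lambda)$ in terms of $\xi$, substitute $\rho_{\mathfrak{osp}} = \tfrac12\rho^\vee_{\mathfrak{sp}}$ and $\rho_{\bar 1} = \rho_{\mathfrak{sp}} - \tfrac12\rho^\vee_{\mathfrak{sp}}$, and the duality relation $\tfrac1K + \tfrac1L = 2$ collapses the residual scalar terms via $\tfrac{1}{2K-1} + 1 = 2L$; all the intermediate identities check out.
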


\subsection{Characters of simple $\mathcal{W}^\ell(\mathfrak{sp}_{2n})$-modules}
For $\nu \in \mathfrak{h}^*$, let $L_\ell^\mathfrak{sp}(\nu)$ be the simple quotient of $M^\ell_\mathfrak{sp}(\nu)$, and $\mathbb{W}^\ell_\mathfrak{sp}(\nu)$ be the Wakimoto module of $\widehat{\mathfrak{sp}}_{2n}$ with the highest weight $\nu$ at level $\ell$ \cite{Wakimoto, FF88, Frenkel05}. Consider a $\widehat{\mathfrak{sp}}_{2n}$-module $L_\ell^\mathfrak{sp}\left( \lambda - 2( \ell+h^\vee_\mathfrak{sp} )\mu \right)$ for $\lambda, \mu \in P_+$. By \cite{Frenkel91} (see also \cite[Proposition 4.2]{ACL}), we have a resolution of $L_\ell^\mathfrak{sp}(\lambda - 2( \ell+h^\vee_\mathfrak{sp} )\mu)$ of the form
\begin{align}
\label{eq:char-simple-1}&0 \rightarrow L_\ell^\mathfrak{sp}\left( \lambda - 2( \ell+h^\vee_\mathfrak{sp} )\mu \right) \rightarrow C_0 \rightarrow C_1 \rightarrow \cdots \rightarrow C_{n^2} \rightarrow 0,\\
\label{eq:char-simple-2}&C_i = \bigoplus_{\begin{subarray}{c} w \in W \\ l(w) = i \end{subarray}} \mathbb{W}^\ell_\mathfrak{sp} ( w \circ \lambda - 2( \ell + h^\vee_\mathfrak{sp} )\mu).
\end{align}
Thus,
\begin{align*}
\operatorname{ch} L_\ell^\mathfrak{sp}\left( \lambda - 2( \ell+h^\vee_\mathfrak{sp} )\mu \right) = \sum_{w \in W} (-1)^{l(w)} \operatorname{ch} \mathbb{W}^\ell_\mathfrak{sp} \left( w \circ \lambda - 2( \ell + h^\vee_\mathfrak{sp} )\mu \right).
\end{align*}
Recall the Drinfeld-Sokolov reduction cohomology functor $H_{DS}^\bullet (?)$ introduced in Section \ref{sec:W-alg-char}. It follows from \cite{Arakawa07} that $H_{DS}^0 ( L_\ell^\mathfrak{sp}\left( \lambda - 2( \ell+h^\vee_\mathfrak{sp} )\mu \right)  )$ is a simple $\mathcal{W}^\ell(\mathfrak{sp}_{2n})$-module, which we denote by $\mathbf{L}_\ell(\chi_{\lambda - 2( \ell+h^\vee_\mathfrak{sp} )\mu})$.
\begin{theorem}
\label{thm:character}
Suppose that $k \notin \Q$ and $\ell \in \C$ defined by \eqref{eq:ell}. For $\mu \in P_+$,
\begin{align*}
\operatorname{\widetilde{ch}} V^k_\mathfrak{osp} (\mu) = \sum_{\lambda \in P_+} \operatorname{\widetilde{ch}} V^k_\mathfrak{sp} (\lambda) \operatorname{\widetilde{ch}} \mathbf{L}_\ell(\chi_{\lambda - 2( \ell+h^\vee_\mathfrak{sp} )\mu}).
\end{align*}
\begin{proof}
First of all, we have $H_{DS}^i ( L_\ell^\mathfrak{sp}\left( \lambda - 2( \ell+h^\vee_\mathfrak{sp} )\mu \right)  ) = 0$ for $i \neq 0$ \cite{Arakawa04} and $H_{DS}^i ( \mathbb{W}^\ell_\mathfrak{sp} \left( w \circ \lambda - 2( \ell + h^\vee_\mathfrak{sp} )\mu \right) ) = 0$ for $i \neq 0$ \cite{FF92}. By applying the functor $H_{DS}^0(?)$ to \eqref{eq:char-simple-1}, we obtain the exact sequence
\begin{align*}
0 \rightarrow \mathbf{L}_\ell(\chi_{\lambda - 2( \ell+h^\vee_\mathfrak{sp} )\mu}) \rightarrow H_{DS}^0(C_0) \rightarrow H_{DS}^0(C_1) \rightarrow \cdots \rightarrow H_{DS}^0(C_{n^2}) \rightarrow 0,
\end{align*}
and, using the Euler-Poincar\'e principle, the character formulas
\begin{align*}
\operatorname{\widetilde{ch}} H_{DS}^0(C_i) = \sum_{\begin{subarray}{c} w \in W \\ l(w) = i \end{subarray}} \frac{q^{m_\ell^\mathfrak{sp}\left( w \circ \lambda - 2(\ell+h^\vee_\mathfrak{sp}) \mu \right)}}{\prod_{j=1}^\infty (1 - q^j)^n} \left.\left(\widehat{R}_\mathfrak{sp} \operatorname{ch} \mathbb{W}^\ell_\mathfrak{sp} \left( w \circ \lambda - 2( \ell + h^\vee_\mathfrak{sp} )\mu \right) \right) \right|_{\mathrm{e}^\alpha \mapsto q^{-(\alpha \mid \rho^\vee_\mathfrak{sp})}}.
\end{align*}
Hence
\begin{multline}\label{eq:proof-char-osp-L-1}
\operatorname{\widetilde{ch}} \mathbf{L}_\ell(\chi_{\lambda - 2( \ell+h^\vee_\mathfrak{sp} )\mu})
= \sum_{w \in W} (-1)^{l(w)} \frac{q^{m_\ell^\mathfrak{sp}\left( w \circ \lambda - 2(\ell+h^\vee_\mathfrak{sp}) \mu \right)}}{\prod_{j=1}^\infty (1 - q^j)^n}\\
\times \left.\left(\widehat{R}_\mathfrak{sp} \operatorname{ch} \mathbb{W}^\ell_\mathfrak{sp} \left( w \circ \lambda - 2( \ell + h^\vee_\mathfrak{sp} )\mu \right) \right) \right|_{\mathrm{e}^\alpha \mapsto q^{-(\alpha \mid \rho^\vee_\mathfrak{sp})}}
\end{multline}
Now, it is easy to see from the definitions of the Wakimoto modules that
\begin{align}\label{eq:proof-char-osp-L-2}
\operatorname{ch} \mathbb{W}^\ell_\mathfrak{sp}(\nu) = \operatorname{ch} M^\ell_\mathfrak{sp}(\nu),\quad
\nu \in \mathfrak{h}^*.
\end{align}
Using \eqref{eq:proof-char-osp-L-1}, \eqref{eq:proof-char-osp-L-2} and Lemma \ref{lemma:delta-lambda-mu}, it follows that
\begin{align*}
\operatorname{\widetilde{ch}} \mathbf{L}_\ell(\chi_{\lambda - 2( \ell+h^\vee_\mathfrak{sp} )\mu})
= &\sum_{w \in W} (-1)^{l(w)} \frac{q^{\Delta_{w \circ \lambda, \mu}}}{\prod_{j=1}^\infty (1 - q^j)^n}
\end{align*}
Comparing this with \eqref{eq:branching-1}, we see immediately that the branching functions coincide with the characters of $\mathbf{L}_\ell(\chi_{\lambda - 2( \ell+h^\vee_\mathfrak{sp} )\mu})$.
\end{proof}
\end{theorem}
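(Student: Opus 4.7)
The plan is to identify the branching coefficient $\sum_{w \in W}(-1)^{l(w)} \widetilde{B}^{w \circ \lambda}_\mu$ appearing in \eqref{eq:branching-1} with the graded character $\operatorname{\widetilde{ch}} \mathbf{L}_\ell(\chi_{\lambda - 2(\ell+h^\vee_\mathfrak{sp})\mu})$ for each $\lambda \in P_+$. Once this per-$\lambda$ equality is established, the theorem follows at once by substituting into Proposition \ref{prop:ch-osp-sp} (in its level-shifted form \eqref{eq:branching-1}). So the whole task is reduced to a character formula for simple $\mathcal{W}^\ell(\mathfrak{sp}_{2n})$-modules obtained from $\widehat{\mathfrak{sp}}_{2n}$ by Drinfeld-Sokolov reduction.

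Write $\nu = \lambda - 2(\ell+h^\vee_\mathfrak{sp})\mu$. Since $k \notin \Q$, the level $\ell$ determined by \eqref{eq:ell} is also generic, so the shifted orbit $W \circ \nu$ is regular. The first step is then to invoke the Feigin-Frenkel-Frenkel resolution of $L_\ell^\mathfrak{sp}(\nu)$ by Wakimoto modules $\mathbb{W}^\ell_\mathfrak{sp}(w \circ \nu)$ graded by $l(w)$, exactly of the form \eqref{eq:char-simple-1}-\eqref{eq:char-simple-2}. Next, I would apply $H^0_{DS}(?)$ termwise. Using Arakawa's vanishing $H^i_{DS}(L_\ell^\mathfrak{sp}(\nu))=0$ for $i\neq 0$ together with Feigin-Frenkel's vanishing $H^i_{DS}(\mathbb{W}^\ell_\mathfrak{sp}(w\circ\nu))=0$ for $i\neq 0$, the reduced complex is still exact and resolves $\mathbf{L}_\ell(\chi_\nu)$. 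The Euler-Poincar\'e principle then gives
\[
\operatorname{\widetilde{ch}} \mathbf{L}_\ell(\chi_\nu) = \sum_{w \in W}(-1)^{l(w)}\operatorname{\widetilde{ch}} H^0_{DS}\bigl(\mathbb{W}^\ell_\mathfrak{sp}(w\circ\nu)\bigr).
\]

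The second step is to evaluate the right-hand side using the general formula of Section \ref{sec:W-alg-char} and the standard fact that the Wakimoto character equals the Verma character, $\operatorname{ch} \mathbb{W}^\ell_\mathfrak{sp}(\sigma) = \operatorname{ch} M^\ell_\mathfrak{sp}(\sigma)$. Under the specialization $\mathrm{e}^\alpha \mapsto q^{-(\alpha \mid \rho^\vee_\mathfrak{sp})}$, the factor $\widehat{R}_\mathfrak{sp}\operatorname{ch}\mathbb{W}^\ell_\mathfrak{sp}(\sigma)=\mathrm{e}^\sigma$ collapses to a single $q$-power, producing
\[
\operatorname{\widetilde{ch}} H^0_{DS}\bigl(\mathbb{W}^\ell_\mathfrak{sp}(w\circ\nu)\bigr) = \frac{q^{m^\mathfrak{sp}_\ell(w\circ\nu) - (w\circ\nu \mid \rho^\vee_\mathfrak{sp})}}{\prod_{j=1}^\infty (1-q^j)^n}.
\]
Finally, Lemma \ref{lemma:delta-lambda-mu} identifies the exponent $m^\mathfrak{sp}_\ell(w\circ\nu) - (w\circ\nu\mid \rho^\vee_\mathfrak{sp})$ with $\Delta_{w\circ\lambda,\mu}$, matching the summand exactly with $\widetilde{B}^{w\circ\lambda}_\mu$ defined in \eqref{eq:branching-1}.

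The main technical obstacle is the step of pushing the Wakimoto resolution through the DS functor: this requires two separate cohomology-vanishing inputs (on the simple module and on each Wakimoto term), and care must be taken that both apply at the chosen generic $\ell$. Once these are cited from the existing literature, the remaining work is essentially matching exponents, which has been arranged by the preparatory computations in Sections B and C.
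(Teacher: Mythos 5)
Your approach coincides with the paper's own proof step for step: invoke the Wakimoto resolution \eqref{eq:char-simple-1}--\eqref{eq:char-simple-2}, push it through $H^0_{DS}$ using the two cohomology-vanishing results (Arakawa for the simple, Feigin--Frenkel for the Wakimoto terms), apply the Euler--Poincar\'e principle, replace Wakimoto characters by Verma characters, and match exponents via Lemma \ref{lemma:delta-lambda-mu} against the branching coefficients in \eqref{eq:branching-1}.

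One notational slip is worth correcting. The resolution terms in \eqref{eq:char-simple-2} are $\mathbb{W}^\ell_\mathfrak{sp}\bigl((w\circ\lambda) - 2(\ell+h^\vee_\mathfrak{sp})\mu\bigr)$, with the dot action of $W$ applied to $\lambda$ alone and the $-2(\ell+h^\vee_\mathfrak{sp})\mu$ shift sitting outside. Writing $\mathbb{W}^\ell_\mathfrak{sp}(w\circ\nu)$ for $\nu = \lambda - 2(\ell+h^\vee_\mathfrak{sp})\mu$ is a different weight, off by $-2(\ell+h^\vee_\mathfrak{sp})\bigl(w(\mu)-\mu\bigr)$, and if taken literally it would make the per-term exponent $m^\mathfrak{sp}_\ell(w\circ\nu) - (w\circ\nu\mid\rho^\vee_\mathfrak{sp})$ fail to equal $\Delta_{w\circ\lambda,\mu}$ as given by Lemma \ref{lemma:delta-lambda-mu}; the lemma is stated precisely for the unhit $\mu$-shift. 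With the weight written in the form $(w\circ\lambda) - 2(\ell+h^\vee_\mathfrak{sp})\mu$ throughout, the term-by-term identification goes through and the argument is identical to the paper's.
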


\bibliographystyle{hunsrt}

\end{document}